\theoremstyle{plain}
\newtheorem{theorem}{Theorem}[section]
\newtheorem{lemma}[theorem]{Lemma} 
\theoremstyle{definition}
\newtheorem{defn}[theorem]{Definition} 
\newtheorem{prop}[theorem]{Proposition}
\newtheorem{eg}[theorem]{Example}
\newtheorem{rmk}[theorem]{Remark} 
\newtheorem{cor}[theorem]{Corollary}
\def\R{\mathbb{R}}
\def\Z{\mathbb{Z}}
\def\N{\mathbb{N}}
\def\Q{\mathbb{Q}}
\def\S{\Sigma}
\DeclareMathOperator{\Smash}{\mathsf{\Lambda}}
\def\cc{\mathcal{C}}
\def\ff{\mathcal{F}}
\def\pp{\mathcal{P}}
\tikzset{
  symbol/.style={
    draw=none,
    every to/.append style={
      edge node={node [sloped, allow upside down, auto=false]{$#1$}}}
  }
}
\def\ss{\mathcal{S}}
\title{Filtered Topology and Persistence in Stable Homotopy}
\author{John Miller}
\date{\today}
\begin{document}

\maketitle{}

\tableofcontents

\section{Introduction}

The theory of persistence modules is relatively new, first introduced in 2005 by Afra Zomorodian and Gunnar Carlsson in their work \cite{ZC}, it formalises the notion of persistent homology introduced in \cite{ELZ} by Herbert Edelsbrunner, David Letscher, and Afra Zomorodian, which was constructed as a tool in topological data analysis. Despite the very applied nature of its beginnings, persistence theory has quickly become a useful tool in various more pure mathematical studies, in particular its relation to Morse theory and the study of functionals on topological spaces. Given a functional $f:X \to \R$ (assumed smooth), we obtain a version of persistence homology by setting
\begin{equation*}\text{H}_*^{\leq r}(X,f):=\text{H}_*(f^{-1}\{\leq r\}; \R)\end{equation*}
the singular homology with $\R$ coefficients of the $r$-sublevel set of $X$ with respect to the functional $f$. For all $r \leq s$ there are morphisms 
\begin{equation*}\text{H}_*^{\leq r}(X,f) \xrightarrow[]{\iota_{r,s}}\text{H}_*^{\leq s}(X,f)\end{equation*}
given by the induced morphism in Homology of the inclusion maps $i_{r,s}:f^{-1}\{\leq r\} \hookrightarrow f^{-1}\{\leq s\}$, noting that if $r=s$, then the induced morphism is exactly the identity. Thus we obtain an $\R$-indexed family of vector spaces, along with a collection of $\R$-linear morphisms $\iota_{r,s}$ for all $s\leq r$. This is the basic definition of a persistence module; we will recall in more detail the basic theory of persistence modules. For the purposes of this paper we relax the assumption of working over a field and will consider a persistence module to be an $\R$-indexed collection of $k$-modules, for some commutative ring $k$, along with $k$-linear morphisms $\iota_{r,s}$. In particular, we will work with Abelian groups. 

The theory of persistence can be abstracted to the language of category theory, in \cite{BCZ1} the notion of persistence categories is introduced. A persistence category is a category enriched in the category of persistence modules. See \cite{BCZ1} and also \cite{BS} for a discussion on this category. If a morphism $f\in \text{Hom}(X,Y)(r)$ lies in the index $r$ level of the persistence module $\text{Hom}(X,Y)$, we say it is a morphism of shift $r$. The composition of morphisms acts additively on shift. There are two canonical categories ($\text{Mod}_k$-enriched) associated to a persistence category; the zero level category $\cc_0$, which is the subcategory of $\cc$ consisting of morphisms of shift zero, and the limit category $\cc_\infty$, this is roughly speaking the category where we 'forget' the size of morphisms. It is shown in \cite{BCZ1} that for the case of persistence categories with an additional triangulated structure, i.e., triangulated persistence categories (TPCs), that $\cc_\infty$ is equivalent to the Verdier localisation of $\cc_0$ with respect to 'weighted acyclic' objects. The structure of TPCs allows one to construct various notions of sizes and distances on the class of objects, the motivation for which originates in the study of filtered Fukaya categories associated to some symplectic manifold, though the framework is purely algebraic. More examples of TPCs have been constructed, for example, in \cite{BCZ1}. It is shown that filtered dg-categories and in particular a category of filtered chain complexes also admit a TPC structure. 

 The objective of this paper is to construct a persistence category that contains the data of topological spaces equipped with functionals. In fact, we will consider general filtrations on spaces which consist of an $\R$-indexed collection of subspaces. We construct a category whose objects are pairs $(X,\ff_X)$ consisting of a space $X$ and a filtration $\ff_X$, and denote this by $\ff \text{Top}_*$. In section \ref{filttopsec} we describe in detail how the category is constructed and some of its properties. In doing so, we define notions of filtered products and wedge sums, as well as a filtered smash product. We then show that there exists a filtered version of CW approximation for filtered spaces.

\begin{lemma}[Filtered CW approximation]\label{CWspace}
Given any filtered space $X\in \ff \text{Top}_*$ there exists a filtered CW complex $\bar{X}$ and a map $f: \bar{X} \to X$ of shift zero, such that $f(r): \bar{X}( r) \to X(r)$ is a weak equivalence for all $r$, and moreover the following homotopy commutes for all $r \leq s$
\begin{equation*}
    \begin{tikzcd}
        \bar{X}(s)\ar[r,"f(s)"] & X(s)\\
        \bar{X}(r)\ar[r,"f(r)"]\ar[u,"i_{r,s}^{\bar{X}}"] & X(r).\ar[u,"i^X_{r,s}"]
    \end{tikzcd}
\end{equation*}

\end{lemma}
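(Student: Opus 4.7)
The plan is to adapt the classical Whitehead CW-approximation argument to the filtered setting by building $\bar X$ skeleton-by-skeleton while assigning each attached cell a ``birth time'' $r \in \R$, in such a way that $\bar X(r)$ is defined as the subcomplex generated by cells born at time $\leq r$. The inclusions $\bar X(r) \hookrightarrow \bar X(s)$ for $r \leq s$ are then automatically subcomplex inclusions, giving the required filtered CW structure built-in.

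For the $0$-skeleton, I would attach one $0$-cell for each path component of each $X(r)$ that is not already represented (under $\pi_0$ of $i^X_{s,r}$) by a component from some earlier level $s < r$; the birth time of such a cell is $r$, and the component of $f(r)$ on it is given by a chosen point in that component. Inductively, assume $\bar X^{(n-1)}$ and $f^{(n-1)}(r)$ are built so that at every level $r$ the induced map on $\pi_k$ is an isomorphism for $k < n-1$ and a surjection for $k = n-1$. At stage $n$, for each $r$ I attach $n$-cells to (i) kill the kernel of $f^{(n-1)}(r)_*$ on $\pi_{n-1}$, and (ii) hit a generating set of $\pi_n(X(r))$; the birth time of a cell is taken to be the minimal level at which the class it witnesses lies in the image of the filtration. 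Classical Whitehead, applied levelwise, then gives that each $f(r)$ is a weak equivalence. The naturality square in fact strictly commutes (which is stronger than the homotopy commutativity asked for) because the attaching and characteristic maps at birth time $r$ are, by construction, reused verbatim at every higher level $s \geq r$.

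The main obstacle is the continuous $\R$-indexing: classical CW approximation is a countable induction on skeletal dimension, whereas here one must simultaneously manage uncountably many filtration levels. The cleanest resolution is to recognise the construction as a cofibrant replacement in the projective model structure on $\mathrm{Top}^{(\R,\leq)}$, whose generating cofibrations are $F_r(S^{n-1} \hookrightarrow D^n)$ with $F_r$ the left adjoint to evaluation at $r$; the small object argument then produces $\bar X$ with the desired cell structure. Arguing directly without invoking model-categorical language requires extra care about whether the ``minimal birth time'' used above is actually attained: for a general filtration one may have to enlarge the cell set slightly, for instance by attaching at every level in some cofinal sequence approaching an infimum, which is where the statement really uses that $\ff\mathrm{Top}_*$ has been set up to accommodate arbitrary $\R$-indexed colimits of subspace inclusions.
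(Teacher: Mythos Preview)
Your approach is genuinely different from the paper's, and the difference stems from a hypothesis you have not used. In this paper $\ff\text{Top}_*$ carries, as a standing assumption, the restriction that filtrations stabilise below at some finite $\lfloor X\rfloor$, stabilise above, and change weak homotopy type only at a countable set $\text{Spec}(X)=\{r_i\}$ of levels. The paper's proof exploits this directly: it is an induction not over skeletal dimension but over the spectral points $r_0<r_1<\cdots$. One first CW-approximates the bottom space $X_0$, and then at each $r_i$ invokes the classical \emph{relative} CW-approximation lemma (given a pair $(X,A)$ and an approximation $\bar A\to A$, extend to $\bar X\supset\bar A$ with $\bar X\to X$ restricting to the given map on $\bar A$) applied to the pair $(X(r_i),X(r_{i-1}))$. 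Between consecutive spectral points the approximation and the map are declared constant. This sidesteps entirely the continuum-indexing problem you identify.

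Your skeleton-by-skeleton construction with birth times is aiming at a more general target --- essentially projective cofibrant replacement in $\text{Top}_*^{(\R,\leq)}$ --- and the obstacle you flag (infimal birth time not attained) is real for an arbitrary $\R$-indexed diagram. Under the paper's spectral-point hypothesis, however, that obstacle evaporates: every cell can simply be born at one of the countably many $r_i$, and the filtration direction becomes discrete. So your route buys greater generality at the cost of invoking the small object argument or a delicate cofinal-sequence workaround; the paper's route is short and elementary, but depends essentially on the discreteness assumption built into its definition of $\ff\text{Top}_*$. If you rewrite your argument taking $\text{Spec}(X)$ as the only allowed birth times, it collapses to something very close to the paper's proof.
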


With this result, we restrict our attention to filtered CW complexes and explore a weighted version of the Euler characteristic $\hat{\chi}_{\text{CW}}$ that takes values in $\Lambda_P=\{\sum_{i=1,\hdots,n}a_i \cdot t^{r_i}: a_i\in \Z, r_i \in \R\}$. We show that it is a `filtered homotopy' invariant and that evaluation of the polynomial at $t=1$ recovers the usual (reduced) Euler characteristic. Moreover, we show that evaluation at $t=1$ of its derivative is a filtered homotopy invariant and gives a weighted version of the usual Euler characteristic. We furthermore show that $\hat{\chi}_{\text{CW}}$ is additive with respect to the filtered wedge sum and distributive over the filtered smash product:

\begin{lemma}
    The weighted Euler polynomial satisfies the following equalities:
    \begin{align*}
     \hat{\chi}_{\text{CW}}(X \vee Y)=&  \hat{\chi}_{\text{CW}}(X) +  \hat{\chi}_{\text{CW}}(Y)\\
     \hat{\chi}_{\text{CW}}(X \bar{\Smash} Y)=&  \hat{\chi}_{\text{CW}}(X) \cdot  \hat{\chi}_{\text{CW}}(Y).
    \end{align*}
    \end{lemma}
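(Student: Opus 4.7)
The plan is to exploit the definition of $\hat{\chi}_{\text{CW}}$ as a weighted cellular sum of the form
\[
\hat{\chi}_{\text{CW}}(X)=\sum_{e\in \text{cells}(X)}(-1)^{\dim e}\, t^{r(e)},
\]
where $r(e)\in\R$ is the filtration level at which the cell $e$ is attached, and to run both identities at the level of this formula after unpacking the cellular descriptions of the filtered wedge $X\vee Y$ and filtered smash $X\,\bar{\Smash}\,Y$ from Section~\ref{filttopsec}.

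For additivity I would first recall that the filtered wedge $X\vee Y$ has underlying space the usual wedge and filtration $(X\vee Y)(r)=X(r)\vee Y(r)$, so its cell decomposition is the disjoint union of the (non-basepoint) cells of $X$ and of $Y$, each attached at exactly the same filtration level as in its original complex. Splitting the sum defining $\hat{\chi}_{\text{CW}}(X\vee Y)$ along this partition yields the two summands $\hat{\chi}_{\text{CW}}(X)+\hat{\chi}_{\text{CW}}(Y)$ immediately.

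For multiplicativity I would use the fact that the filtered smash product inherits a CW structure whose cells are products $e\times f$ for $e$ a cell of $X$ and $f$ a cell of $Y$, with $\dim(e\times f)=\dim e+\dim f$. The critical input is the formula for the filtration level of such a product cell: by construction of $\bar{\Smash}$ the appearance time is $r(e\times f)=r(e)+r(f)$. Granting this, we compute
\[
\hat{\chi}_{\text{CW}}(X\,\bar{\Smash}\,Y)=\sum_{e,f}(-1)^{\dim e+\dim f}\,t^{r(e)+r(f)}=\Bigl(\sum_e (-1)^{\dim e}t^{r(e)}\Bigr)\Bigl(\sum_f(-1)^{\dim f}t^{r(f)}\Bigr),
\]
which is exactly $\hat{\chi}_{\text{CW}}(X)\cdot\hat{\chi}_{\text{CW}}(Y)$, where the product is taken in $\Lambda_P$.

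The main obstacle, and the step I would write out most carefully, is verifying the claim that the product cell $e\times f$ appears in the filtration $(X\,\bar{\Smash}\,Y)(r)$ precisely at $r=r(e)+r(f)$; once this compatibility between the cellular and the filtered structures is established, the polynomial identities are pure bookkeeping. A subsidiary check that I would also include is independence of the representative CW model, which follows from the filtered homotopy invariance of $\hat{\chi}_{\text{CW}}$ proven earlier together with Lemma~\ref{CWspace}, ensuring the identities hold for the canonical filtered CW approximations of any $X,Y\in\ff\text{Top}_*$.
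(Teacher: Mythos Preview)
Your proposal is correct and follows essentially the same argument as the paper: the wedge identity is proved by partitioning $\text{Cells}_*(X\vee Y)$ into $\text{Cells}_*(X)\cup\text{Cells}_*(Y)$, and the smash identity by using the key weight formula $w(a\times b)=w(a)+w(b)$ to factor the double sum. The only cosmetic difference is that the paper first establishes multiplicativity for $X\bar{\times}Y$ and then observes that passing to $X\bar{\Smash}Y$ removes only eternal cells (which already contribute $t^{-\infty}=0$), whereas you work directly on the smash; the computations are identical.
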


At this point we will have constructed a suitable category of filtered topological spaces and discussed some of its filtered homotopical properties. In order to utilise the framework of persistence modules and persistence categories we will need to 'stabilise' this category. We will construct a persistence version of the Spanier-Whitehead category for CW-complexes (see \cite{SW} for the first introduction to this and also \cite{DP}, \cite{EKM} and \cite{De}). We will denote this category by $\pp \text{SW}$, its objects will be pairs, consisting of filtered CW-complexes and integers, with morphisms given by persistence modules 
\begin{equation*}
    \text{Hom}_{\pp \text{SW}}((X,n),(Y,m)):= \text{Colim}_l[\S^{l+n}X,\S^{l+m}Y] 
\end{equation*}

We show the following:

\begin{theorem}
    The persistence Spanier-Whitehead category $\pp \text{SW}$ is a TPC.
\end{theorem}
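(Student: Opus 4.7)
The plan is to verify in sequence: (i) that $\pp\text{SW}$ is enriched over persistence modules of abelian groups and is additive; (ii) that it carries a triangulated structure, with translation $T(X,n) := (X, n+1)$ and distinguished triangles induced by filtered mapping cones; and (iii) that it satisfies the compatibility axioms of a TPC in the sense of \cite{BCZ1}. For (i), Section \ref{filttopsec} gives a well-defined notion of a shift-$r$ filtered map, namely a map $g: X \to Y$ with $g(X(s)) \subset Y(s+r)$ for every $s \in \R$, and filtered composition adds shifts. This stratifies each homotopy set $[\S^{l+n}X, \S^{l+m}Y]$ by subgroups $[-,-]_r$ of shift-$r$ filtered homotopy classes, with inclusions $[-,-]_r \hookrightarrow [-,-]_{r'}$ whenever $r \leq r'$. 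Setting
\[ \text{Hom}_{\pp\text{SW}}((X,n),(Y,m))(r) := \text{Colim}_l \, [\S^{l+n}X, \S^{l+m}Y]_r, \]
and checking that the suspension stabilisation maps preserve the shift filtration, one obtains a persistence $\Z$-module, bilinear under composition. Additivity of $\pp\text{SW}$, with biproducts given by the filtered wedge $\vee$ of Section \ref{filttopsec}, then follows exactly as in the unfiltered Spanier--Whitehead case.

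For (ii), given a shift-$r$ representative $f: \S^{l+n}X \to \S^{l+m}Y$, I would form the filtered mapping cone $C_f$ in $\ff\text{Top}_*$ using the filtered smash and cylinder of Section \ref{filttopsec}, and declare the Puppe sequence
\[ (X,n) \xrightarrow{f} (Y,m) \to (C_f, m) \to T(X,n) \]
(after $\text{Colim}_l$), together with every triangle isomorphic to one of this form, to be the distinguished triangles. Axioms TR1--TR4 reduce, via Lemma \ref{CWspace} for CW-replacement on arbitrary filtered spaces, to their classical analogues in $\text{SW}$ as in \cite{SW, DP, EKM}; the only extra bookkeeping is that each auxiliary morphism arising in the classical arguments (the dotted arrow of TR3 and the octahedral edges of TR4) can be chosen with the correct additive shift, which is possible because filtered cylinders and cones preserve the shift structure by construction.

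For (iii), a TPC additionally requires: commutation of $T$ with the persistence-shift endofunctors $\Sigma^r$ (reparametrising the $\R$-filtration by $r$); extension of the canonical shift-$r$ natural transformations $\eta_r: \text{id} \Rightarrow \Sigma^r$ to distinguished triangles whose third term is $r$-acyclic; and thickness of the resulting class of $r$-acyclic objects. Commutation of $T$ with $\Sigma^r$ is immediate, since smashing with $S^1$ is $\Sigma^r$-equivariant by the construction of the filtered smash. The triangle extending $\eta_r$ is obtained by applying the mapping cone construction objectwise to $\eta_r$. Thickness of $r$-acyclics reduces, via the cofibre long exact sequence, to the vanishing of $r$-small homotopy classes in $(\pp\text{SW})_\infty$, which is a formal consequence of the filtered homotopy calculus of Section \ref{filttopsec}. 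The main obstacle I foresee lies precisely in (iii): one must verify carefully that the filtered mapping-cone construction is compatible both with the stabilisation colimit along $\S$ and with the reparametrisations $\Sigma^r$, so that the required triangles and the natural transformations $\eta_r$ descend coherently to $\pp\text{SW}$. Once this coherence is in place, TR1--TR4 and the TPC interaction axioms follow by combining the classical Spanier--Whitehead arguments with the filtered unstable manipulations already developed.
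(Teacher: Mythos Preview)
Your outline follows essentially the same route as the paper: establish the persistence enrichment, show $(\pp\text{SW})_0$ is triangulated, check that $T=[1]$ commutes with the shift functors $\ss^r$, and then verify the TPC axiom on $\eta_r$. (For the triangulation of $(\pp\text{SW})_0$ the paper does not re-run TR1--TR4 but simply invokes the general Spanier--Whitehead construction of \cite{De} applied to the pair $((\ff\text{Top}_*)_0,\S)$; your more hands-on sketch would also work, but is not needed.)

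There is, however, a genuine gap in your treatment of (iii). You say ``the triangle extending $\eta_r$ is obtained by applying the mapping cone construction objectwise to $\eta_r$,'' and you list $r$-acyclicity of the third term as a requirement, but you never actually verify it. This is precisely the non-formal content of the theorem. The paper carries out the computation: levelwise, $\eta_r^{\S^k X}(t)$ is the inclusion $\S^k X(t)\hookrightarrow \S^k X(t+r)$, so its cone at level $t$ is, up to homotopy, the quotient $\S^k X(t+r)/\S^k X(t)$. The structure map $\eta_r$ on this cone is then the map
\[
\S^k X(s+r)/\S^k X(s)\;\longrightarrow\;\S^k X(s+2r)/\S^k X(s+r)
\]
induced by inclusion, which is nullhomotopic because the image of the source has been collapsed in the target. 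Passing to the stable colimit gives $\eta_r^{\text{Cone}(\eta_r^{(X,k)})}=0$. This is a short argument, but it is the substance of the proof, and nothing in your sketch supplies it.

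Separately, the ``thickness of $r$-acyclics'' you invoke is not part of the TPC definition used here (which requires only: $\cc_0$ triangulated, $T$ commuting with $\ss^r$, and each $\eta_r^A$ completing to a triangle with $r$-acyclic cone). Your appeal to ``vanishing of $r$-small homotopy classes in $(\pp\text{SW})_\infty$'' as a ``formal consequence'' is therefore both unnecessary and, in any case, too vague to stand in for the missing acyclicity computation above.
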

We then go on to explore what the fragmentation distances defined in \cite{BCZ1} are measuring in this setting, and show that the weighted Euler characteristic can be recovered from this. We then discuss the $K$-group of $\pp \text{SW}_0$ (which for general TPCs is explored in \cite{BCZ2}) and prove the following result:
\begin{theorem}\label{Mainiso}
    There is an isomorphism of $\Lambda_P$-algebras induced by the weighted Euler characteristic
    \begin{equation*}
        \mathcal{X}:K(\pp \text{SW}_0 ) \to \Lambda_P
    \end{equation*}
    given by $\mathcal{X}([(X,n)])=(-1)^n \cdot \hat{\chi}_{\text{CW}}(X)$.
\end{theorem}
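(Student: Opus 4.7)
The plan is to verify, in order: (i) $\mathcal{X}$ is well-defined on $K$-classes; (ii) it is a homomorphism of $\Lambda_P$-algebras; (iii) it is surjective; and (iv) it is injective. Steps (i)--(iii) amount to checking that the weighted Euler polynomial respects the structure of $K(\pp\text{SW}_0)$, while (iv) will be the main obstacle.

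For (i), the group $K(\pp\text{SW}_0)$ is the free abelian group on iso-classes of objects modulo the relations $[B]=[A]+[C]$ coming from exact triangles $A\to B\to C$ in the TPC $\pp\text{SW}$. The filtered-homotopy invariance of $\hat{\chi}_{\text{CW}}$ established earlier in the paper handles iso-classes. For exact triangles, which in $\pp\text{SW}$ arise from filtered cofiber sequences of CW complexes, the relation $\hat{\chi}_{\text{CW}}(B)=\hat{\chi}_{\text{CW}}(A)+\hat{\chi}_{\text{CW}}(C)$ follows cell-by-cell, since the filtered CW structure on the cofiber $C=B/A$ inherits the cells of $B\setminus A$ at their original filtration heights. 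The shift $(X,n)\mapsto(X,n+1)$ in $\pp\text{SW}_0$ corresponds to a suspension in $\pp\text{SW}$ and so must flip the sign of $\mathcal{X}$; the prefactor $(-1)^n$ is precisely what makes $\mathcal{X}$ compatible with the triangles containing this shift.

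For (ii), the $\Lambda_P$-algebra structure on $K(\pp\text{SW}_0)$ is induced by $[(X,n)]\cdot[(Y,m)]:=[(X\,\bar{\Smash}\,Y,\,n+m)]$, with scalars $t^r$ acting via smashing with a filtered $S^0$ concentrated at height $r$. Combining $(-1)^{n+m}=(-1)^n(-1)^m$ with the multiplicativity of $\hat{\chi}_{\text{CW}}$ under $\bar{\Smash}$ gives the ring-homomorphism property, while compatibility with addition uses additivity under the wedge sum together with the fact that the split cofiber sequence $X\to X\vee Y\to Y$ represents the sum in $K$. For (iii), each monomial $t^r$ is hit by $[(S^0_r,0)]$, where $S^0_r$ is the filtered two-point space whose non-basepoint enters at height $r$, and $-t^r$ by $[(S^0_r,1)]$; $\Z$-linearity then yields surjectivity.

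The main obstacle is (iv), injectivity. Given a class $[(X,n)]\in\ker\mathcal{X}$, by Lemma \ref{CWspace} we may take $X$ to be a filtered CW complex, and the filtered skeletal filtration yields, for each $k$-cell attached at height $r_i$, a cofiber sequence whose third term is a filtered sphere $S^{k_i}_{r_i}$. Iterating gives in $K(\pp\text{SW}_0)$ an expression of $[(X,n)]$ as an integral combination $\sum_i(-1)^{n+k_i}\,t^{r_i}\cdot[(S^0,0)]$, after absorbing suspensions via the degree-shift relation established in (i). Under $\mathcal{X}$ this sum is exactly the cellular definition of $(-1)^n\hat{\chi}_{\text{CW}}(X)$, so its vanishing reads as an equality $\sum_i\epsilon_i t^{r_i}=0$ in $\Lambda_P$; $\Z$-linear independence of $\{t^r:r\in\R\}$ then forces every coefficient, and hence $[(X,n)]$, to vanish. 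The delicate point I expect to work hardest on is justifying this decomposition into filtered spheres in $K$: one has to verify that filtered attaching maps genuinely yield triangles with sphere cofibers in $\pp\text{SW}$ so that the iteration terminates on the sphere generators, and that no hidden obstruction arises from the colimit defining the persistence module $\text{Hom}_{\pp\text{SW}}$.
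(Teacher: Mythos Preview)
Your proposal is correct and follows essentially the same approach as the paper: both arguments hinge on the filtered skeletal decomposition expressing $[(X,n)]$ as a $\Lambda_P$-combination of the single generator $[(S^0_0,0)]$, together with the additivity and multiplicativity of $\hat{\chi}_{\text{CW}}$ under $\vee$ and $\bar{\Smash}$. The paper organizes things slightly more efficiently by first proving the structural identity $[(X,n)]=(-1)^n\hat{\chi}_{\text{CW}}(X)\cdot[(S^0_0,0)]$ in $K(\pp\text{SW}_0)$ and reading off well-definedness, surjectivity, and injectivity simultaneously, whereas you verify these separately---but the content is the same.
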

This result reflects a known result that $K(\text{SW})\cong \Z$ with isomorphism induced by the usual (reduced) Euler characteristic. See for example \cite{Mo}. Finally, we will briefly discuss how to extend these ideas to the construction of a persistence stable homotopy category of filtered spectra and comment on why this should be of interest.

We begin by reviewing some of the basics of persistence modules and persistence categories.
\subsection{Review of persistence theory}

We recall some of the basic definitions of persistence theory, see \cite{PRSZ} and also \cite{BS} for more detailed accounts. Furthermore we use \cite{BCZ1} for reference when recalling the definitions of persistence categories and triangulated persistence categories. A \textbf{persistence module} will be a functor $V:(\R,\leq) \to \text{Mod}_k$, where $(\R,\leq)$ is the poset category of reals. Explicitly, $\text{Obj}((\R,\leq))=\R$ and 
\begin{equation*}
    \text{Hom}_{(\R,\leq)}(r,s):=\begin{cases}
        i_{r,s} & r\leq s\\
        \emptyset & r>s.
    \end{cases}
\end{equation*}
Given a persistence module $V$, and a real $t$, one can define the $t$-shift of $V$ to be the persistence module $V[t]:(\R,\leq) \to \text{Mod}_k$, given by
\begin{align*}
    V[t](r)=&V(r+t)\\
    V[t](i_{r,s})=&V(i_{r+t,s+t})\notag
\end{align*}

One then defines a \textbf{persistence module morphism} from $V$ to $W$ of \text{shift} $s$, to be a natural transformation $f:V \to W[s]$. Explicitly one realises this as a $\R$-index family of $k$-linear morphisms 
\begin{equation*}
    f(r):V(r) \to W(r+s)
\end{equation*}
which commute with the \text{persistence module structure maps} $i_{r,t}^V:=V(i_{r,t})$. One can compose a persistence module morphism $f:V \to W[s]$ of shift $s$ with a persistence module morphism $g:W \to Q[s']$ of shift $s'$ to obtain a persistence module morphism $g \circ f$ of shift $s+s'$ by setting

\begin{equation*}
    (g \circ f)(r)= g(r+s) \circ f(r) 
\end{equation*}

The collection of persistence modules along with persistence module morphisms forms a category, $\text{Mod}_k^\pp$, with 
\begin{equation*}
    \text{Hom}_{\text{Mod}_k^\pp}(V,W):= \coprod_{r\in \R} \text{Nat}(V,W[r]).
\end{equation*}

One finds $\text{Hom}_{\text{Mod}_k^\pp}(V,W)$ carries a canonical persistence module structure $\text{Hom}_{\text{Mod}_k^\pp}(V,W):(\R,\leq) \to \text{Mod}_k$, given by
\begin{equation*}
    \text{Hom}_{\text{Mod}_k^\pp}(V,W)(r)=\text{Nat}(V,W[r])
\end{equation*}
and with $\text{Hom}_{\text{Mod}_k^\pp}(V,W)(i_{r,s})$ defined by 
\begin{equation*}
    \text{Hom}_{\text{Mod}_k^\pp}(V,W)(i_{r,s})(f)(t)= W(i_{t+r,t+s}) \circ f(t)=f(t+s-r) \circ V(i_{t,t+s-r}). 
\end{equation*}

For more details and discussion of $\text{Mod}_k^\pp$, see \cite{BM}. We will call a category $\cc$ a \text{persistence category} (following \cite{BCZ1}) if $\text{Hom}_\cc(A,B)$ carries the structure of a persistence module and, moreover, the composition respects this structure. In particular, one requires $\circ:\text{Hom}_\cc(A,B) \times \text{Hom}_\cc(B,C) \to \text{Hom}_\cc(A,C)$ to restrict to a family of $k$-linear maps

\begin{equation*}
    \circ_{r,s}:\text{Hom}_\cc(A,B)(r) \oplus \text{Hom}_\cc(B,C)(s) \to \text{Hom}_\cc(A,C)(r+s)
\end{equation*}

satisfying 
\begin{equation*}
 \circ_{r',s'} \circ \big(\text{Hom}_\cc(A,B)(i_{r,r'}) \oplus  \text{Hom}_\cc(B,C)(i_{s,s'}) \big)= 
 \text{Hom}_\cc(A,C)(i_{r+s,r'+s'}) \circ \circ_{r,s}.
\end{equation*}
We refer to morphisms $f\in \text{Hom}_\cc(A,B)(r)$ as morphisms of shift $r$, and to simplify notation we will usually just write $i_{r,s}$ instead of $\text{Hom}_\cc(A,B)(i_{r,s})$.
\begin{rmk}
    The category of persistence modules is naturally a persistence category.
\end{rmk}

Given two persistence categories $\cc$ and $\cc'$ one defines a \textbf{persistence functor} to be a functor $F: \cc \to \cc'$, such that $F:\text{Hom}_\cc(A,B) \to \text{Hom}_{\cc'}(F(A),F(B))$ is a natural transformation of persistence modules with shift zero. Given two persistence functors $F,F':\cc \to \cc'$ one can define a persistence module of \text{persistence natural transformations}. Explicitly,

\begin{equation*}
    \text{Nat}(F,F')(r):=\{(\eta:F \to F') :\eta_A\in \text{Hom}_{\cc'}(F(A),F'(A))(r) \hspace{5pt} \forall A \in \cc \}
\end{equation*}
and the persistence structure morphisms $\text{Nat}(F,F')(i_{r,s})$ are given by

\begin{equation*}
    \text{Nat}(F,F')(i_{r,s})(\eta)_A=\text{Hom}_{\cc'}(F(A),F'(A))(i_{r,s})
\end{equation*}

Given a persistence category $\cc$, one defines a \textbf{shift functor} on $\cc$ to be a functor $\ss:(\R,+) \to \text{End}_\pp(\cc)$, where $(\R,+)$ is the groupoid of reals with $\text{Obj}((\R,+))=\R$ and $\text{Hom}_{(\R,+)}(r,s):=\eta_{r,s}$ for any $r,s\in \R$. Note that $1_r=\eta_{r,r}$. $\text{End}_\pp(\cc)$ is the persistence category of (persistence) endofunctors on $\cc$, with morphisms given by persistence natural transformations. More explicitly, a shift functor is an $\R$-indexed family of functors $\ss^a=\ss(a):\cc \to \cc$ along with natural isomorphisms $\ss(\eta_{a,b})=\eta_{a,b}:\ss^a \to \ss^b$. We will denote by $(\eta_{a,b})_A\in \text{Hom}_\cc(\ss^aA,\ss^bA)(b-a)$ the associated isomorphism on object $A\in \cc$.
Associated to a persistence category $\cc$ are two categories $\cc_0$, the zero level category, and $\cc_\infty$, the limit category. $\cc_0$ is simply the subcategory of $\cc$ consisting of morphisms with shift zero. The limit category is in some sense the category where we forget the size of the morphism shifts. $\cc_\infty$ has the same objects of $\cc$, but with hom-sets given by equivalence classes of morphisms under the equivalence $\sim$. Here a morphism $f\in \text{Hom}_\cc(A,B)(s)$ and a morphism $f\in \text{Hom}_\cc(A,B)(s')$ are equivalent iff there exists some $t\in \R$ such that $i_{s,t}(f) = i_{s',t}(f')$. We will denote 
\begin{equation*}
    \eta_{r}^A:=i_{-r,0}\circ (\eta_{0,-r})_A \in \text{Hom}_\cc(A,\ss^{-r}A)(0)=\text{Hom}_{\cc_0}(A,\ss^{-r}A)
\end{equation*}
where $r\geq 0$. Note that $\eta_0^A=1_A$ and $\eta_s^{\ss^{-r}A} \circ \eta_r^A=\eta_{r+s}^A $. An object $A\in \cc$ will be called \textbf{weighted acyclic} of weight $r$, or simply $r$-acyclic, if $\eta_{r}^A=0$, and we will denote this by $A\simeq_r 0$. A persistence category $\cc$ is said to be a \textbf{triangulated persistence category} or TPC for short, if $\cc_0$ is triangulated in the usual sense, and if every morphism $\eta_{r}^A$ can be completed to an exact triangle in $\cc_0$
\begin{equation*}
    \begin{tikzcd}
        A \ar[r,"\eta_r^A"]& \ss^{-r}A \ar[r]& K\ar[r] & TA
    \end{tikzcd}
\end{equation*}
with $K\simeq_r 0$. Furthermore, one requires that the triangulation functor $T:\cc_0 \to \cc_0$ commute with the shift functors $\ss^r$. 
\begin{prop}\cite{BCZ1}
    If $\cc$ is a TPC, then $\cc_\infty$ is triangulated and is equivalent as a triangulated category to the localisation of $\cc_0$ with respect to the full subcategory of weighted acyclics.
\end{prop}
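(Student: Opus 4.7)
The plan is to follow the standard Verdier localisation argument: identify the thick triangulated subcategory $\mathcal{A} \subseteq \cc_0$ of weighted acyclic objects, construct a natural functor $\pi \colon \cc_0 \to \cc_\infty$ that annihilates $\mathcal{A}$, and verify that the induced functor $\bar{\pi}\colon \cc_0/\mathcal{A} \to \cc_\infty$ is an equivalence. The triangulated structure on $\cc_\infty$ is then transported from $\cc_0/\mathcal{A}$, which carries a canonical triangulation by the general theory of Verdier quotients.

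The first step is to show that $\mathcal{A} := \{A \in \cc_0 : A \simeq_r 0 \text{ for some } r \geq 0\}$ is a thick triangulated subcategory of $\cc_0$. Closure under the translation $T$ follows from the hypothesis that $T$ commutes with the shift functors $\ss^r$ together with naturality of $\eta_r$. Closure under extensions is the central calculation: given an exact triangle $A \to B \to C \to TA$ with $A \simeq_r 0$ and $C \simeq_s 0$, the identity $\eta_{r+s}^B = \eta_s^{\ss^{-r}B} \circ \eta_r^B$ combined with the octahedral axiom and the vanishing of $\eta_r^A$ and $\eta_s^C$ forces $\eta_{r+s}^B = 0$, hence $B \simeq_{r+s} 0$. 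Closure under retracts is a similar and easier diagram chase.

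Next, I would define $\pi\colon \cc_0 \to \cc_\infty$ as the identity on objects and the natural quotient on morphism sets. If $A \simeq_r 0$, then $\eta_r^A = 0$ in $\cc_0$; but $\eta_r^A$ is also identified with $1_A$ in $\cc_\infty$ (since $\eta_{0,-r}$ is invertible there), forcing $A \cong 0$ in $\cc_\infty$. By the universal property of Verdier localisation, $\pi$ factors uniquely through $\bar{\pi}\colon \cc_0/\mathcal{A} \to \cc_\infty$. To produce a quasi-inverse $\Phi\colon \cc_\infty \to \cc_0/\mathcal{A}$, I would use the bijection $\text{Hom}_\cc(A,B)(s) \cong \text{Hom}_{\cc_0}(A, \ss^{-s}B)$ supplied by the shift functor to rewrite a shift-$s$ morphism $f$ as a shift-zero morphism $\tilde{f}\colon A \to \ss^{-s}B$, and set
\begin{equation*}
    \Phi([f]) := \bigl( A \xrightarrow{\tilde{f}} \ss^{-s}B \xleftarrow{\eta_s^B} B \bigr),
\end{equation*}
the roof in $\cc_0/\mathcal{A}$ whose backward leg $\eta_s^B$ has $s$-acyclic cone by the TPC axiom and is therefore inverted in the quotient.

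The hardest step will be the last one: checking that $\Phi$ is well-defined on the equivalence relation $\sim$ that defines $\cc_\infty$ and compatible with composition forces one to juggle the shift isomorphisms $(\eta_{a,b})_X$, the algebraic relations among the maps $\eta_r^{(-)}$, and the Verdier roof calculus all at once. Once this is verified, mutual invertibility of $\bar{\pi}$ and $\Phi$ is formal, and the triangulation on $\cc_0/\mathcal{A}$ transports along the equivalence to declare a triangle in $\cc_\infty$ exact precisely when it is the image of an exact triangle under $\bar{\pi}$.
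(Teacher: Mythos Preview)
The paper does not prove this proposition: it is stated with citation to \cite{BCZ1} and no argument is given. There is therefore nothing in the paper to compare your proposal against.

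That said, your outline is the expected Verdier-localisation strategy and is essentially the one carried out in \cite{BCZ1}. A few remarks on the sketch itself. The closure-under-extensions step is the only place where the TPC axiom (that $\eta_r^A$ has $r$-acyclic cone) is genuinely used, and your one-line ``octahedral axiom plus vanishing of $\eta_r^A$ and $\eta_s^C$'' hides the real work: one first applies the map-of-triangles axiom to the diagram with rows $A\to B\to C\to TA$ and their $\ss^{-r}$-shifts connected by $\eta_r$, uses $\eta_r^A=0$ to factor $\eta_r^B$ through $C$, and then composes with $\eta_s$ to kill it. This is standard but not as immediate as you suggest. Your construction of the quasi-inverse $\Phi$ via roofs $A\xrightarrow{\tilde f}\ss^{-s}B\xleftarrow{\eta_s^B}B$ is exactly the right idea, and you correctly flag that well-definedness on $\sim$-classes and compatibility with composition are where the bookkeeping with the $\eta_{a,b}$ lives.
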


A morphism in $f\in \text{Hom}_{\cc_0}(A,B)$ is called a \textbf{weighted isomorphism} of weight $r$, or just an $r$-isomorphism if it can be completed to an exact triangle 
\begin{equation*}
    \begin{tikzcd}
        A\ar[r,"f"] & B\ar[r,"g"] & C \ar[r,"h"]& TA
    \end{tikzcd}
\end{equation*}
with $C\simeq_r 0$. By definition an $r$-isomorphism will represent an isomorphism in $\cc_\infty$.
There is a class of triangles in $\cc_0$ called \textbf{strict exact triangles}, which contains the class of exact triangles. A triangle

\begin{equation*}\begin{tikzcd}
    A\ar[r,"u"] & B\ar[r,"v"] & C\ar[r,"w"] & \ss^{-r}TA
\end{tikzcd}\end{equation*}
is called a strict exact triangle of weight $r$ if it embeds into a diagram 

\begin{equation*}\begin{tikzcd}
   & &\ss^r C\ar[d,"\phi"]\ar[dr,"\ss^rw"]&  \\
    A\ar[r,"u"] & B\ar[dr,"v"]\ar[r,"v'"] & C'\ar[r,"w'"]\ar[d,"f"] & TA\\& & C\ar[r,"w"] & \ss^{-r}TA
\end{tikzcd}\end{equation*}

where $f$ is an $r$-isomorphism and $f \circ \phi=\eta_r^{\ss^rC}$. This class of triangles allows one to define a \textbf{triangular weight} on the class of exact triangles $\Delta(\cc_\infty)$ in $\cc_\infty$. By this we mean a map 
\begin{equation*}
    w_\infty:\Delta(\cc_\infty) \to \R_{\geq 0}
\end{equation*}
which satisfies the \textbf{weighted octahedral axiom}: Given two exact triangles $\Delta_1$ and $\Delta_2$ one can find two more exact triangles $\Delta_3$ and $\Delta_4$, that fit into an octahedron 

 \begin{equation*}\begin{tikzcd}
   &\Delta_1 & & \Delta_4& &\\
      &  F\ar[r]\ar[d] & 0\ar[r] \ar[d]& TF\ar[r,equals]\ar[d] & TF\ar[d]\\
     \Delta_3&   G\ar[r]\ar[d] & M\ar[r]\ar[d,equals]  & P\ar[r]\ar[d]   & TG \ar[d] \\
    \Delta_2 &   H\ar[r] \ar[d]& M\ar[r]\ar[d]  & N\ar[r]\ar[d]   & TH  \ar[d]\\
     &   TF\ar[r]& 0\ar[r]  & T^2F\ar[r,equal]   & T^2F
    \end{tikzcd}\end{equation*}
    with every square commuting except the bottom right which anti-commutes
    and 
    \begin{equation*}w_\infty(\Delta_3)+w_\infty(\Delta_4)\leq  w_\infty(\Delta_1)+w_\infty(\Delta_2).\end{equation*}
Given $\Delta=   \begin{tikzcd}
        A\ar[r,"f"] & B\ar[r,"g"] & C \ar[r,"h"]& TA
    \end{tikzcd}$ the weight $w_\infty$ is defined as
\begin{equation*}
    w_\infty(\Delta):=\inf\{r: \exists \Delta':=\begin{tikzcd}
        A\ar[r,"f'"] & \ss^{-r_1}B \ar[r,"g'"]& \ss^{-r_2}C\ar[r,"h'"] & \ss^{-r}TA
    \end{tikzcd} \text{ strict exact representing }\Delta\}
\end{equation*}

Where $\Delta'$ represents $\Delta$ if 
\begin{align*}
    f=&[\eta_{-r_1,0}\circ f' ]\\
    g=&[\eta_{-r_2,0} \circ g' \circ \eta_{0,-r_1}]\notag\\
    h=&[\eta_{-r,0}\circ h' \circ \eta_{0,-r_2}]\notag
\end{align*}

There is another triangular weight 
\begin{equation*}
    \bar{w}(\Delta):=\in\ff_{s}\{w_\infty(\ss^{s,0,0,s}\Delta):s\in \R\}
\end{equation*}
with
\begin{equation*}
    \ss^{a,b,c,d}(\begin{tikzcd}A \ar[r]& B \ar[r]& C\ar[r]  & TA\end{tikzcd})=\begin{tikzcd}\ss^aA \ar[r]& \ss^bB\ar[r] & \ss^c C \ar[r]&\ss^dTA\end{tikzcd}
\end{equation*}

and morphisms shifted appropriately by $\eta_{r,s}$'s. Triangular weights allow the construction of (pseudo) metrics called \textbf{fragmentation metrics} on the objects of $\cc$. These metrics are defined as follows: First choose a family $\ff\subset \text{\text{Obj}}(\cc)$ and define

\begin{equation*}
\delta^\ff(A,B)= \inf\{\sum_{i=1}^n \bar{w}(\Delta_i)\}  
\end{equation*}
where $\{\Delta_i\}_{i=1,\hdots,n}$ are exact triangles in $\cc_\infty$ that give an iterated cone decomposition of $A$ from $B$ using $\ff$. That is, a sequence of exact triangles
 \begin{equation*}\begin{tikzcd}[ampersand replacement =\&]
    \Delta_1 : \& X_1\ar[r] \& 0 \ar[r] \& Y_1\ar[r]  \& TX_1\\
    \Delta_2 : \& X_2 \ar[r] \& Y_1 \ar[r] \& Y_2 \ar[r] \& TX_2\\
    \vdots \& \& \vdots \& \vdots\\
    \Delta_n: \& X_n\ar[r]  \& Y_{n-1} \ar[r] \& A \ar[r] \& TX_n                    
\end{tikzcd}\end{equation*} 
where $X_i\in \ff$  $\forall i\neq j$ and $X_j=T^{-1}B$. The symmetrisation of this gives a pseudo metric

\begin{equation*}
    d^\ff(A,B)=\max\{\delta^\ff(A,B),\delta^\ff(B,A)\}
\end{equation*}
Note that this metric can be degenerate, for example if $A$ and $B$ both belong to $\ff$ then $d^\ff(A,B)=0$. It is also often non-finite.

\section{Filtered topological spaces} \label{filttopsec}
We begin by defining what we mean by a filtered topological space, and explore some of the standard notions in topology in the now filtered setting. Note importantly, from the point of view of homotopy theory we choose to work with filtered pointed spaces. Some of the key constructions we explore are notions of filtered smash products, filtered homotopies, and filtered CW complexes. We show that there is an analogue of the CW approximation theorem, this allows us to concentrate on studying filtered CW complexes, and leads to the natural notion of weighted Euler characteristic, which we show is a filtered homotopy invariant. 
\subsection{Definitions}

\begin{defn}\label{filttopdef}
Let $X$ be a pointed topological space, with basepoint $*_X$. We define a \textbf{filtration} $\ff$ on $X$ to be a collection of pointed subspaces of $X$ indexed over the reals,
\begin{equation}
\ff=\{X(r): X(r)\xhookrightarrow{i_r} X\}_{r\in \R} \end{equation}
where $i_r$ is the canonical inclusion map and such that:
\begin{enumerate}
    \item For all $r\in \R_{}$, we have $i_r(*_{X(r)})=*_X$, i.e., they have a common basepoint.
    \item  For all $s<r$ we have $X(s) \subset X(r)$, we denote the inclusion map $i_{s,r}$.
    \item There exists an $r_0\in \R$ such that for all $r< r_0$ we have $X(r)=X_0$, i.e., the filtration is stabilises below to some space $X_0$.

    \item There exists an $r_1\in \R$ such that for all $r>r_1$ we have $X(r)=X$, i.e., the filtration stabilises above to $X$, which we refer to as the \text{total space}.

\end{enumerate}

We will refer to the pair $(X,\ff)$ as a \textbf{filtered topological space}. Note we will often omit the filtration from notation, and simply refer to $X$ as a filtered topological space, the filtration on $X$ will be made clear. One can think of the filtered space as filtered relative to the lower stabilisation $X_0$. It will become clear further in the text why we should view it in this manner. We denote 
\begin{equation}
    \lfloor X \rfloor:=\sup\{r_0: \forall r\leq r_0 \hspace{5pt} X(r)\simeq *\}
\end{equation}
and 
\begin{equation}
    \lceil X \rceil:=\inf\{r_1:\forall r>r_1 \hspace{5pt} X(r)=X\}
\end{equation}
\end{defn}

\begin{eg}
Consider the $n$-sphere $X:=S^n=\{(x_0,\hdots,x_{n}):\sum_{i}x_i^2=1\}$, and functional 
\begin{equation*}h:S^n \to \R\end{equation*}
\begin{equation*}(x_0,\hdots,x_n)\mapsto x_n.\end{equation*}

Up to homeomorphism we have that the level sets of the filtration induced by $h$ are identified with:
\begin{equation*}X(r):=\begin{cases}S^n & r\geq 1\\
D^n & r\in (-1,1)\\
* & r\leq -1\end{cases}\end{equation*}
where $D^n$ is the $n$-disk. Note that up to homotopy we have
\begin{equation*}X(r)\simeq\begin{cases}S^n & r\geq 1\\
* & r<1\end{cases}\end{equation*}.
\end{eg}

\begin{defn}
Given two filtered spaces $(X,\ff_X)$ and $(Y,\ff_Y)$, we define a \textbf{morphism of filtered spaces} $f:(X,\ff_X) \to (Y,\ff_Y)$
to be a collection of continuous maps

\begin{equation}\big\{f(r):X(r) \to Y(r+\lceil f \rceil)\big\}_{r\in \R}\end{equation}

where $\lceil f \rceil\in \R$ is called the \textbf{shift} of $f$. We require that the morphisms $f(r)$ commute with the filtration maps. That is, we require the following to commute for all $s\leq r$:

\begin{equation}\begin{tikzcd}X(r) \arrow[r,"f(r)"] & Y(r+\lceil f \rceil)\\
X(s)\arrow[u,hook,"i_{s,r}^X"]\arrow[r,"f(s)"] & Y(s+\lceil f \rceil) \arrow[u,hook,swap,"i_{s+\lceil f \rceil,r+ \lceil f \rceil}^Y"]\end{tikzcd} \end{equation}

\end{defn}

We can compose morphisms of filtered spaces $f:(X,\ff_X) \to (Y,\ff_Y)$ and $g:(Y,\ff_Y) \to (Z,\ff_Z)$ to obtain a morphsim $g \circ f:(X,\ff_X) \to (Z,\ff_Z)$ with shift
\begin{equation}\lceil g \circ f \rceil = \lceil f \rceil + \lceil g \rceil\end{equation}
This allows us to form a category of filtered spaces which we denote $\ff \text{Top}_*$.
\begin{defn}
We write $\text{Hom}_{\ff \text{Top}_*}((X,\ff_X),(Y,\ff_Y))(a):=\{f:(X,\ff_X) \to (Y,\ff_Y) : \lceil f \rceil = a\}\subset \text{Hom}((X,\ff_X),(Y,\ff_Y))$ for the subset of morphisms with shift term equal to $a$.
\end{defn}
\begin{rmk}
Given $a<b$, we have that $\text{Hom}((X,\ff_X),(Y,\ff_Y))(a)\subset \text{Hom}((X,\ff_X),(Y,\ff_Y))(b)$
\end{rmk}

The category $\ff \text{Top}_*$ comes equipped with a collection of \textbf{shift functors}. Given $a\in \R$ we define $\ss^a:\ff \text{Top}_* \to \ff \text{Top}_*$ by 

\begin{align*}
(\ss^{-a} X)( r)=X(r+a) \\(\ss^{-a}f)(r)=f(r+a)
 \end{align*}

 \begin{rmk}
 Note that $\ss^{a}\circ \ss^{b}=\ss^{a+b}$
and in particular $\ss^{a} \circ \ss^{-a}=\ss^0=1_{\ff \text{Top}_*}$.
 \end{rmk}
\subsection{Products and Wedges }
With the above definition of $\ff \text{Top}_*$ we can define products and coproducts (wedges) of filtered spaces, as well as tensor products (smash products). From now on we will omit the filtration $\ff$ from notation and simply write $X$ to be a filtered space where if there is a specific filtration it will be made clear. We begin with the following:
\begin{defn}
We define the \textbf{naive filtered product} and \textbf{wedge}, $X \times Y$ and $X \vee Y$ by simply taking them levelwise:

\begin{align}(X \times Y)(r):=& X( r) \times Y( r)\\(X \vee Y)(r):= &X( r) \vee Y(r ) \end{align}

 The filtration maps are given by the obvious maps induced from the filtrations on $X$ and $Y$. 
\end{defn}

 There exists a map $\Delta_X:X \to X \times X$ of shift $\lceil \Delta_X\rceil=0$ given by
\begin{equation*}X(r) \xrightarrow{ \Delta(r)}X(r)\times X(r)\end{equation*}
\begin{equation*}x\mapsto (x,x)\end{equation*}
Thus we have $\Delta_X(r)=\Delta_{X(r)}$. Given two morphisms $f,g:X \to Y$ with $\lceil f \rceil = \lceil g \rceil = a$ we can define their product map 

\begin{equation*}(f\times g)(r):(X \times X)( r)=X( r ) \times X( r) \xrightarrow{f( r)\times g( r)}Y( r + a )\times Y( r+a) = (Y\times Y)(r+a)\end{equation*}

The shift remains the same, $\lceil f\times g \rceil = a$. Though this seems to behave in the manner one would expect, we now define a better behaved product of filtered spaces. It is as follows:
 \begin{defn}
     Given $X$ and $Y$ in $\ff \text{Top}_*$, we define their \textbf{filtered product} $X \bar{\times} Y$ by:
     \begin{equation}
         (X \bar{\times} Y)(r):= \bigcup_{s+t=r}X(s) \times Y(t)
     \end{equation}
     where the union is taken inside of the product of the total spaces $X \times Y$. And the filtration maps are the obvious inclusions. 
     \end{defn}

Notice that there is an `eternal' (we will define this shortly) copy of $X \vee Y$ contained in $X \bar{\times}Y$. One can think of this filtered space as $X \times Y$ filtered relative $X\vee Y$. Indeed, if one thinks to define a similar wedge sum via unions, $X \bar{\vee} Y$, we simply obtain: 

\begin{equation*}
    (X \bar{\vee} Y) (r)= X \vee Y
\end{equation*}
 the wedge sum of the total spaces. 

Given $f: X \to Y $ and $f': X' \to Y'$ we define $f \bar{\times }f': X \bar{\times} X' \to Y \bar{\times} Y'$ as follows. $(f \bar{\times}f')(r)$ is given by first considering $f$ and $f'$ as morphisms of the total space, i.e., we consider $f(\lceil X \rceil)$ and $f'(\lceil X'\rceil)$, then we restrict this map to the subset of $X \times Y$ (the product of the total spaces) to $(X \bar{\times }X')(r)$

\begin{equation}
(f \bar{\times}f')(r):= f(\lceil X\rceil)\times f'(\lceil X' \rceil))|_{(X\bar{\times}X')(r)} \end{equation}

\begin{rmk}
    There does not exist a natural diagonal map $X \to X \bar{\times}X$. Any such map would need to be such that $\bar{\Delta}:X(r) \to \bigcup_{s+t=r+k}X(s) \times X(t) $ where $k=\lceil \bar{\Delta}\rceil$ is some shift. Take for example the unit interval $I=[0,1]$ filtered with $I(r)=[0,r]$ with $I(r)=*$ for $r\leq 0$ and $I(r)=[0,1]$ for $r\geq 1$. Then $r\in I(r)$ but $(r,r)\notin (I \bar{\times}I)(r)$ thus we must shift by some $k$ to $(I \bar{\times}I)(r+k)$. But this $k$ will depend on $r$.
\end{rmk}

\begin{figure}[H]
    \centering
    \begin{subfigure}[b]{0.49\linewidth}
         \includegraphics[width=\textwidth,trim={0cm 0cm 0cm 0cm},clip]{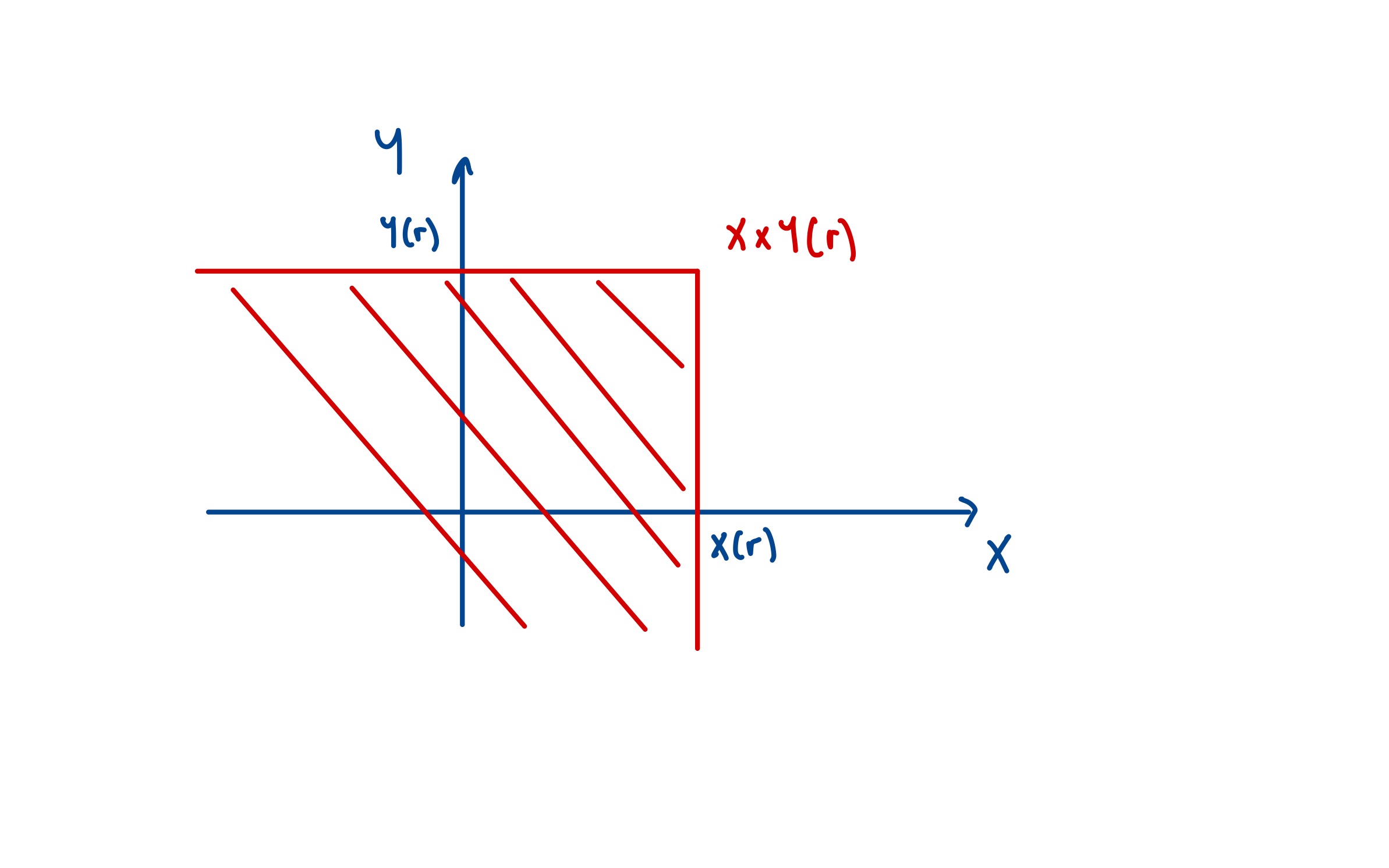}
    \end{subfigure}
    \begin{subfigure}[b]{0.49\linewidth}
         \includegraphics[width=\textwidth,trim={0cm 0cm 0cm 0cm},clip]{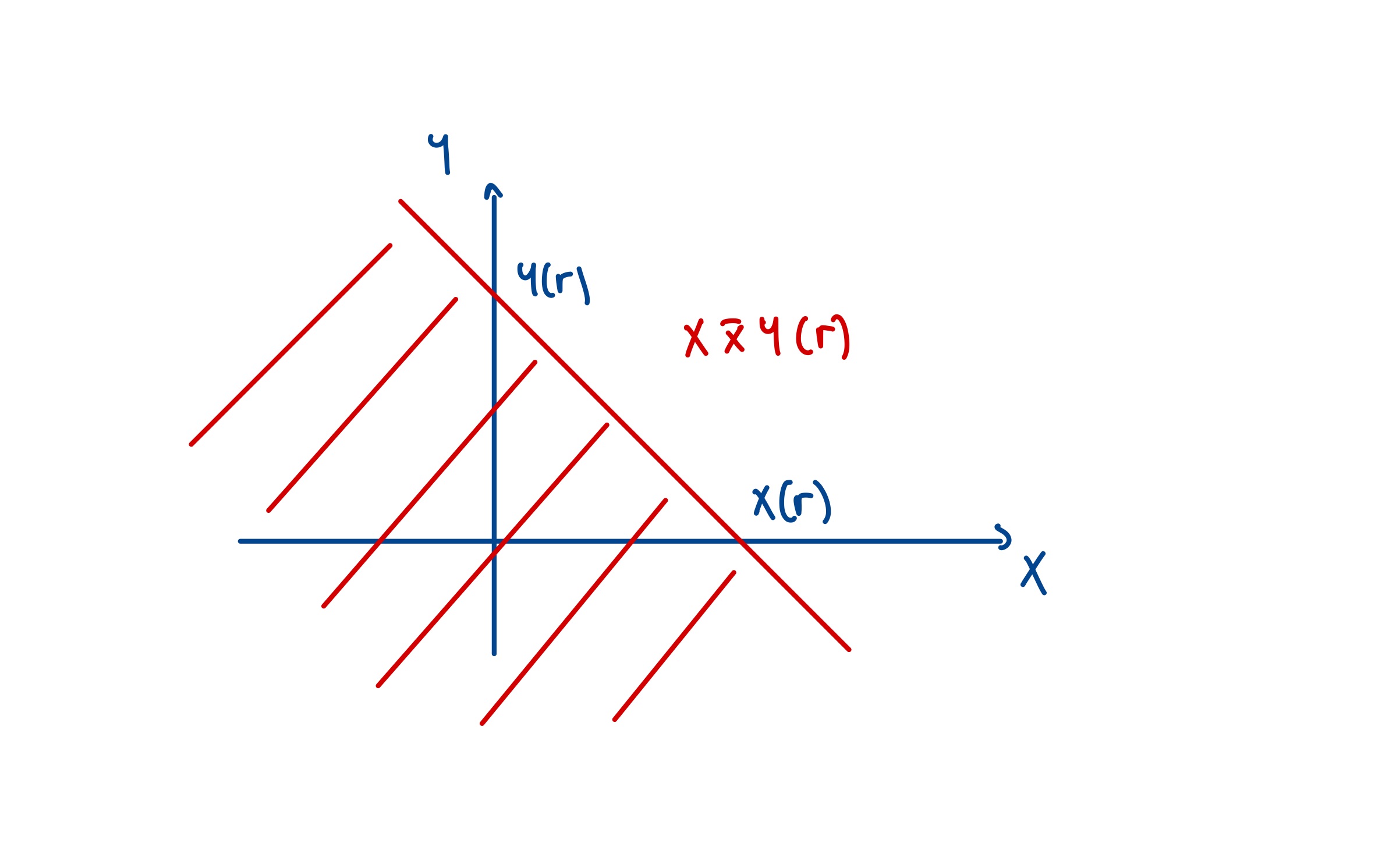}
    \end{subfigure}
   \caption{Diagrams representing the naive product $X \times Y$ (left) and the product $X\bar{\times}Y$ (right).}
    \label{products}
\end{figure}

\begin{defn}
    The \textbf{naive filtered smash product} is defined by 
\begin{equation}(X \Smash Y)(r):= X( r)\Smash Y(r) \end{equation}
On morphisms, it is given simply by the levelwise smash product for morphisms.
\end{defn}

Note that this is simply the quotient of the naive product by the naive wedge;  $(X \Smash Y)(r)=(X\times Y)(r)/(X \vee Y)(r) $. 
\begin{defn}
    We define the \textbf{filtered smash product} $\bar{\Smash}$ by:

    \begin{equation}
        (X \bar{\Smash}Y)(r):= (X \bar{\times} Y)(r)/X \vee Y= \bigcup_{s+t=r} X(s) \Smash Y(t).
    \end{equation}
\end{defn}
On morphisms, we take the product of morphisms $f\bar{\times}f'$, then look to the induced map on the quotient space.
\begin{prop}
    The filtered smash product commutes with shifting:
   \begin{equation}
       \ss^a(X \bar{\Smash}Y) = \ss^aX \bar{\Smash}Y = X \bar{\Smash} \ss^a Y.
   \end{equation} 
   \begin{proof}
       This is a simple calculation:
       \begin{align*}
           (X\bar{\Smash} \ss^aY)(r)=& \bigcup_{s+t=r}X(s) \Smash \ss^aY(t)\\
           =& \bigcup_{s+t=r}X(s) \Smash Y(t-a)\\
           =& \bigcup_{s} X(s) \Smash Y((r-a)-s)\\
           =& \bigcup_{s+t=r-a}X(s) \Smash Y(t)\\
           =& (X\bar{\Smash}Y)(r-a)\\
           =& \ss^{a}(X\bar{\Smash}Y)(r)
       \end{align*}
       And similarly it is easy to verify $\ss^a (X \bar{\Smash}Y)= \ss^a X \bar{\Smash}Y$.
   \end{proof}
\end{prop}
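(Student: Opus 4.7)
The plan is to prove this by directly unwinding the definition of the filtered smash product and performing a re-indexing of the union over the decomposition $s+t = r$. Specifically, I would start from $(X \bar{\Smash} \ss^a Y)(r) = \bigcup_{s+t=r} X(s) \Smash \ss^a Y(t)$, then substitute $\ss^a Y(t) = Y(t-a)$ using the definition of the shift functor on objects. Setting $t' = t - a$, the condition $s + t = r$ becomes $s + t' = r - a$, so the union becomes $\bigcup_{s+t'=r-a} X(s) \Smash Y(t')$, which is precisely $(X \bar{\Smash} Y)(r-a) = \ss^a(X \bar{\Smash} Y)(r)$.

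For the other equality $\ss^a(X \bar{\Smash} Y) = \ss^a X \bar{\Smash} Y$, I would run the symmetric calculation with the re-indexing applied to the first coordinate instead: writing $\ss^a X(s) = X(s-a)$ and setting $s' = s - a$ so $s' + t = r - a$. Both computations are essentially identical up to which factor absorbs the shift, which is exactly the statement that the union indexing is symmetric in $s$ and $t$.

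There isn't really a hard step here — the content of the proposition is that the shift by $a$ on the total filtration parameter $r$ can equivalently be distributed to either factor, which is immediate from the fact that the indexing condition $s+t = r$ is linear in all three variables. The only thing worth being careful about is that the subspaces $X(s) \Smash Y(t)$ sit compatibly inside the total space $X \Smash Y$ (since all the subspaces are taken as subsets of the total smash), so the re-indexed unions genuinely agree as subspaces and not merely up to homeomorphism; this is automatic from the definitions in \ref{filttopdef} since each filtration is by honest subspaces.

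Finally, I would briefly remark that the statement on morphisms follows from the corresponding statement on objects, because the morphism $f \bar{\Smash} f'$ is defined as the restriction of the total-space smash $f(\lceil X \rceil) \Smash f'(\lceil Y \rceil)$ to each filtration level, and this restriction is unaffected by re-labelling the filtration parameter via $\ss^a$.
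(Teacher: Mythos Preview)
Your proposal is correct and follows exactly the same approach as the paper: a direct unwinding of the definition of $\bar{\Smash}$ followed by the re-indexing $t' = t - a$, with the other equality handled symmetrically. Your additional remarks about the unions agreeing as honest subspaces and about the induced statement on morphisms are sound but go slightly beyond what the paper records.
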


The properties of these smash products will soon be explored more, and will be key to proving Theorem \ref{Mainiso}.

\subsection{Homotopy}

Given a filtered space $X$ we define its \textbf{filtered cylinder} to be the filtered space with
\begin{equation}\text{Cyl}(X)(r):= \text{Cyl}\big(X(r)\big)\end{equation}
Note this can be viewed as $X \times I$ where $I$ has the \textbf{trivial filtration}:
\begin{equation*}
    I(r):=
        [0,1]\hspace{8pt} \forall r\in \R.
\end{equation*}

We similarly define the \textbf{reduced filtered cylinder} by
\begin{equation}\text{Cyl}_*(X)(r):= \text{Cyl}_*\big(X(r)\big).\end{equation}

This can be viewed as $X \Smash I_+ $ with $I_+$ having the trivial filtration (where the subscript denotes adjoining of a disjoint basepoint).

\begin{defn}
Let $f,g:X \to Y$ be morphisms with shift $\lceil f \rceil = \lceil g \rceil = a$, we define a \textbf{ homotopy} from $f$ to $g$, to be a map 
\begin{equation}h:\text{Cyl}_*(X) \to Y\end{equation}

such that 
\begin{equation}\begin{tikzcd} X \arrow[r,"i_0"]\arrow[dr,swap,"f"]& \text{Cyl}_*(X) \arrow[d,"h"]& X\arrow[l,swap,"i_1"]\arrow[dl,"g"]\\
& Y &\end{tikzcd}\end{equation}
commutes, where $i_0,i_1$ are the obvious inclusion maps of shift zero. In particular $\lceil h \rceil = a$.

\end{defn}

\begin{rmk}
This defines an equivalence relation, $\sim_a$, on $\text{Hom}(X,Y)(a)$, we denote 
\begin{equation}[X,Y](a):=\text{Hom}(X,Y)(a)/\sim_a\end{equation}
\end{rmk}

\begin{defn}
    We call two filtered spaces $X$ and $X'$ \textbf{$r$-filtered homotopy equivalent}, if there exist maps $f:X \to X'$ and $g:X' \to X$ with $\lceil f\rceil=r=-\lceil g \rceil$ such that $f\circ g \sim 1_{X'}$ and $g \circ f \sim 1_X$. Note that we will sometimes refer to a $0$-homotopy equivalence simply as a homotopy equivalence or filtered homotopy equivalence.
\end{defn}

Consider the $k$-sphere with filtration concentrated at $0\in \R$ so that 
\begin{equation}
    S^k(r):=\begin{cases}
        S^k & r\geq 0\\
        * & r<0
    \end{cases}
\end{equation}
We will call this the \textbf{zero filtration on $S^k$}. We define the \textbf{$k$-th filtered homotopy group} of a filtered space $X$ to be 
\begin{equation}\pi_k^r(X):=[S^k,X](r)\end{equation}

\begin{rmk}
By the definition of filtration $(S^k)( r)$, it can easily be verified that 
\begin{equation*}\pi_k^r(X)=[S^k,X](r)=\pi_k(X(r)).\end{equation*}
Indeed, $[S^k,X](r)$ are homotopy classes of maps from $S^k$ to $X$ with shift $r$. Such maps are given by families

\begin{equation*}
    \begin{tikzcd}
        S^k(b)\ar[r,"f(b)"] & X(b+r)\\
        S^k(a)\ar[u,"i_{a,b}^{S^k}"] \ar[r,"f(a)"]& X(a+r)\ar[u,swap,"i_{a+r,b+r}^X"]
    \end{tikzcd}
\end{equation*}
For all $0\leq a\leq b$ we have $i_{a,b}^{S^k}=1_{S^k}$, hence such a map $f$ is given by
\begin{equation*}
    \begin{tikzcd}
        & X(b+r)\\
        S^k\ar[r,swap,"f(a)"]\ar[ur,"f(b)"]& X(a+r)\ar[u,swap,"i_{a+r,b+r}^X"]
    \end{tikzcd}
\end{equation*}
Since $i_{a,b}^X$ are inclusions we have $\text{Im}(f(a))=\text{Im}(f(b))$ inside of the total space $X$. A homotopy (in the filtered setting) from such an $f$ to some $f'$ (with $f'$ also of shift $r$) is a map $h:\text{Cyl}_*(S^k) \to X $, which explicitly is a family of maps

\begin{equation*}
  \begin{tikzcd}  \text{Cyl}_*(S^k)(b)\ar[r,"h(b)"] & X(b+r)\\
    \text{Cyl}_*(S^k)(a)\ar[u,"i_{a,b}^{\text{Cyl}_*(S^k)}"]\ar[r,"h(a)"] & X(a+r)\ar[u,"i_{a+r,b+r}^X"]
    \end{tikzcd}
\end{equation*}
Again, when $0\leq a\leq b$ we realise that $i_{a,b}^{\text{Cyl}_*(S^k)}=1_{\text{Cyl}_*(S^k)}$. Which implies the following
\begin{equation*}
  \begin{tikzcd}  
  & X(b+r)\\
    \text{Cyl}_*(S^k)\ar[ur,"h(b)"]\ar[r,"h(a)"] & X(a+r)\ar[u,swap,"i_{a+r,b+r}^X"]
    \end{tikzcd}
\end{equation*}
and hence $\text{Im}(h(a))=\text{Im}(h(b))$ inside of the total space $X$. Putting this all together, we see that two (filtered) morphisms $f,f':S^k \to X$ are homotopic iff there is a homotopy between the limiting maps $\lim_a f(a): S^k \to X(\infty)=X$ and $\lim_a f'(a):S^k \to X(\infty)=X$ given by $\lim_a h(a)$.
\end{rmk}

We will want to work with filtrations that are suitably `nice', we therefore restrict our attention to pairs $(X,\ff_X)$ where the filtration $\ff_X$ has the additional property that:
\begin{itemize} \label{assump}
    \item For all $k\in \N$, there exist countably many $r\in \R$ such that for any $\epsilon>0$ we have  $\pi_k^r(X) \neq \pi_k^{r-\epsilon}(X)$.
\end{itemize}

This property tells us that the homotopy type of the filtered space can only change at countably many points. We refer to these points as \textbf{spectral points} of the filtration and denote the set of spectral points of a filtered space by $\text{Spec}(X)$. From now on we will assume our filtered spaces to have this additional property and refer to $\ff \text{Top}_*$ as the category consisting of objects with this additional assumption.

\begin{defn}
We denote the \textbf{suspension} of a filtered space $X$ to be $\S X$, defined by
\begin{equation}\S X(r):= \S \big(X(r)\big)\end{equation}
This can be seen to be given by $\S X= S^1 \Smash X$ with $S^1$ having trivial filtration.
\end{defn}
Note that suspension of the trivially filtered $k$-sphere $S^k$, gives the trivially filtered $(k+1)$-sphere, $S^{k+1}$. Similarly, suspension of a $k$-sphere with zero filtration will give the $(k+1)$-sphere with zero filtration (after identifying $\S * \simeq *$). 
\begin{rmk}
    Given a general filtered space $X$, we may define a corresponding \textbf{cut off} filtered space  $\text{Cut}_a(X)$, given by
    \begin{equation}
     \text{Cut}_a(X)(r):=   \begin{cases}
            X(r) & r\geq a\\
            * & r<a
        \end{cases}
    \end{equation}
    The zero filtration on any given space can thus be thought of as the cut off at $a=0$ of the trivial filtration. 
\end{rmk}

\begin{prop}
    Denote by $S^1_0$, the zero filtered $1$-sphere, and by $S^1_\infty$ the trivially filtered sphere, let $X$ be any filtered space, then 
    \begin{equation}
        S_\infty^1 \Smash X \simeq S^1_0 \bar{\Smash} X. 
    \end{equation}
are zero homotopy equivalent.
\begin{proof}
    Notice that we have the following
    \begin{align*}
     (S^1_0 \bar{\times} X)(r) =& \bigcup_{t}S^1(t)\times X(r-t)\\
     =&
         S^1 \times X(r) \cup * \times X
    \end{align*}
    Indeed, when $t<0$, $S^1_0(t)=*$. Thus 
    \begin{equation*}
        (S^1_0\bar{\Smash}X)(r)=S^1 \times X(r) \cup * \times X/(S^1 \vee X)
    \end{equation*}
    But this is clearly equivalent to $S^1 \times X(r)/S^1 \vee X(r)= S^1\Smash X(r)$.
\end{proof}
\end{prop}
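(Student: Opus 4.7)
The plan is to prove the stronger statement that at each filtration level the two spaces are in fact homeomorphic, and that these level-wise homeomorphisms assemble into a homeomorphism of filtered spaces, which is \emph{a fortiori} a $0$-homotopy equivalence. Both sides live naturally as subquotients of the total space $S^1 \times X$, so I would work inside $S^1 \times X$ throughout and identify the two relevant subspaces before passing to the quotient by $S^1 \vee X$.

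First I would unpack
\begin{equation*}
    (S^1_0 \bar{\times} X)(r) = \bigcup_{s+t=r} S^1_0(s) \times X(t)
\end{equation*}
by splitting the union at $s = 0$. For $s \geq 0$ one has $S^1_0(s) = S^1$ and $X(t) \subseteq X(r)$ since $t \leq r$, so this sub-union collapses to $S^1 \times X(r)$. For $s < 0$ one has $S^1_0(s) = *$, and the sub-union becomes $\{*\} \times \bigcup_{t > r} X(t) = \{*\} \times X$, where the last equality uses the upper-stabilisation axiom from Definition \ref{filttopdef}. Hence
\begin{equation*}
    (S^1_0 \bar{\times} X)(r) = (S^1 \times X(r)) \cup (\{*\} \times X)
\end{equation*}
inside $S^1 \times X$, which is precisely the intermediate expression recorded by the author.

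Next I would take the quotient by $S^1 \vee X = (S^1 \times \{*\}) \cup (\{*\} \times X)$. The summand $\{*\} \times X$ is contained in $S^1 \vee X$ and so collapses to the basepoint, while $S^1 \times X(r)$ intersects $S^1 \vee X$ in exactly $(S^1 \times \{*\}) \cup (\{*\} \times X(r))$. The resulting quotient is therefore $S^1 \Smash X(r) = (S^1_\infty \Smash X)(r)$. Since the filtration maps on both sides are the ambient inclusions coming from $S^1 \times X / (S^1 \vee X)$, the level-wise identifications are tautologically natural in $r$, and the resulting isomorphism of filtered spaces has shift zero.

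I expect the only genuine subtlety to be in the $s<0$ half of the union: one must ensure that $\bigcup_{s<0} \{*\} \times X(r-s)$ literally fills out $\{*\} \times X$, rather than some strictly smaller cofinal subspace of it, and this is exactly what the upper-stabilisation axiom on filtered spaces guarantees. Everything else reduces to the standard identity $A \times B / (A \vee B) = A \Smash B$ applied at each fixed $r$.
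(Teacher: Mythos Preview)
Your proposal is correct and follows essentially the same route as the paper: compute $(S^1_0\bar{\times}X)(r)$ by splitting the union at $s=0$, then quotient by $S^1\vee X$ to recover $S^1\Smash X(r)$ level-wise. You are simply more explicit than the paper about the case split, the role of the upper-stabilisation axiom in getting $\{*\}\times X$ from the $s<0$ piece, and the naturality in $r$ that makes the identification a shift-zero map; these are points the paper leaves implicit under ``clearly equivalent''.
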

Thus, we can think of suspension in terms of either $\Smash$ or $\bar{\Smash}$. A similar statement holds for higher spheres.

\begin{figure}[H]
    \centering
    \includegraphics[width=0.5\linewidth]{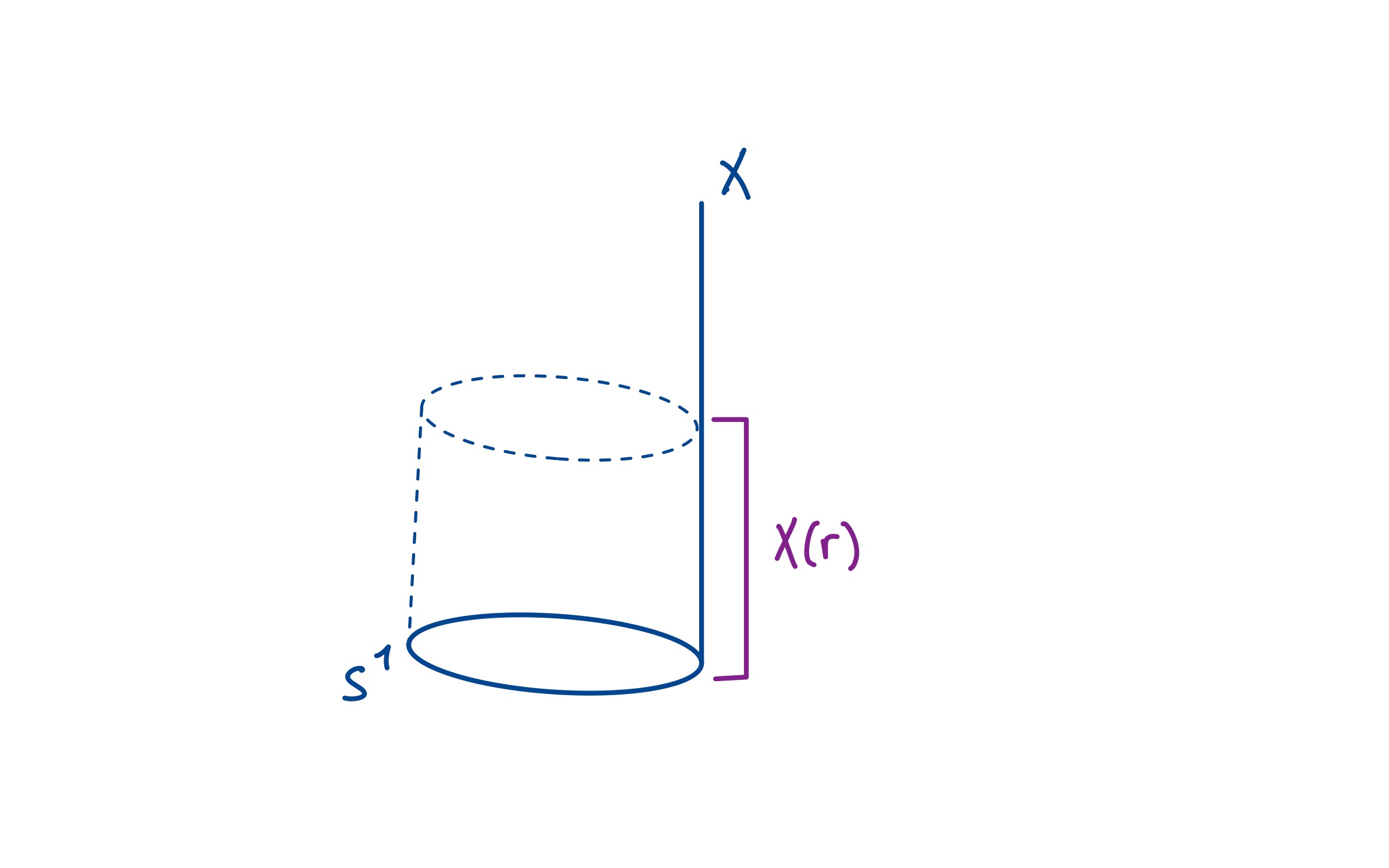}
    \caption{A depiction of $(S^1_0 \bar{\times} X)(r) $ showing the eternal subcomplex $S^1 \vee X$.}
    \label{fig:enter-label}
\end{figure}

\subsection{Filtered CW-complexes and weighted Euler characteristic}
In order to construct homotopical notions in the filtered setting we need to understand what a filtered CW complex should be and what filtered weak homotopy equivalences are. 

\begin{defn}
A \textbf{filtered CW complex} is a filtered space $X$ such that the levelsets are all sub-CW-complexes and the inclusion maps are cellular inclusions. A morphism of filtered CW complex is a morphism of filtered topological spaces such that each level is cellular.
\end{defn}

\begin{rmk}
    We refer to the cells in the subcomplex $X_0$ of some filtered CW complex as \textbf{eternal cells}, and the subcomplex itself as the \textbf{eternal subcomplex}. This is because the cells are never created, they exist for all $X(r)$.  One can think of a filtered CW complex $X$ to be filtered relative to the eternal subcomplex. 
\end{rmk}

\begin{lemma}[Filtered CW approximation]\label{CWspacefilt}
Given any filtered space $X\in \ff \text{Top}_*$, there exists a filtered CW space $\bar{X}$ and a map $f: \bar{X} \to X$ of shift zero, such that $f(r): \bar{X}( r) \to X(r)$ is a weak equivalence for all $r$, and moreover the following homotopy commutes for all $r \leq s$
\begin{equation}
    \begin{tikzcd}
        \bar{X}(s)\ar[r,"f(s)"] & X(s)\\
        \bar{X}(r)\ar[r,"f(r)"]\ar[u,"i_{r,s}^{\bar{X}}"] & X(r).\ar[u,"i^X_{r,s}"]
    \end{tikzcd}
\end{equation}

\end{lemma}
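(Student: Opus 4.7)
The plan is to build $\bar{X}$ by transfinite induction over the countable set $\mathrm{Spec}(X)$ of spectral points of $X$, arranging things so that for $r \leq s$ the inclusion $\bar{X}(r) \hookrightarrow \bar{X}(s)$ is a cellular CW-subcomplex inclusion and $f(s)$ strictly restricts on $\bar{X}(r)$ to $i^X_{r,s}\circ f(r)$. This will give strict commutativity of the square, which is stronger than the homotopy commutativity asked for.

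The base case sets $\bar{X}(r)=*$ for $r<\lfloor X \rfloor$ together with a standard CW approximation of $X(\lfloor X \rfloor)$. At a spectral point $r$, I form the CW colimit $\bar{X}(r^-) := \mathrm{colim}_{s<r}\, \bar{X}(s)$, whose induced map to $X(r^-) := \bigcup_{s<r} X(s)$ is a weak equivalence because weak equivalences of spaces are preserved under filtered colimits along cellular inclusions. I then apply the classical relative CW approximation theorem to the composite $\bar{X}(r^-) \to X(r^-) \hookrightarrow X(r)$: this attaches cells to produce $\bar{X}(r) \supset \bar{X}(r^-)$ together with a weak equivalence $f(r):\bar{X}(r)\to X(r)$ strictly extending the maps below. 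At a non-spectral $r$, setting $r' := \sup\{s \in \mathrm{Spec}(X) : s \leq r\}$, the definition of spectral points together with Whitehead's theorem force $X(r') \hookrightarrow X(r)$ to be a weak equivalence, so I can define $\bar{X}(r) := \bar{X}(r')$ and $f(r) := i^X_{r',r}\circ f(r')$. The spectral-points assumption on $\bar{X}$ then holds automatically since $\pi_k^r(\bar{X})=\pi_k^r(X)$ at every $r$ by construction.

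The principal obstacle is the transfinite bookkeeping: the countable set $\mathrm{Spec}(X)$ in its natural order need not be well-ordered --- it could for instance be order-isomorphic to $\Q$ --- so the induction must be stated either as a recursion on the order type of $\mathrm{Spec}(X) \cap (-\infty, r]$ for each $r$, or on some abstract well-ordering of $\mathrm{Spec}(X)$ with a more delicate gluing step at each stage. Once this is dealt with, the analytic inputs reduce to the two classical facts used above and the induction closes. A shortcut that avoids the bookkeeping entirely is to take $\bar{X}(r) := |\mathrm{Sing}(X(r))|$ with $f(r)$ the counit of the $\mathrm{Sing}$--$|\cdot|$ adjunction; naturality then yields a strictly commutative ladder of cellular inclusions and levelwise weak equivalences, at the cost of producing very large CW complexes.
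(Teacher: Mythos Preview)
Your main construction---iterate the relative CW approximation lemma over the spectral points, keeping $\bar{X}(r)$ constant on the intervals between consecutive ones---is exactly the paper's argument. The paper simply writes $\mathrm{Spec}(X)=\{r_i\}$ and inducts on $i$, tacitly treating the spectral set as a well-ordered increasing sequence (consistent with the finiteness hypotheses it invokes elsewhere); your explicit discussion of the transfinite bookkeeping when $\mathrm{Spec}(X)$ has a more complicated order type is in fact more scrupulous than the paper itself. The $|\mathrm{Sing}(-)|$ shortcut you propose at the end is not in the paper; it is a legitimate alternative and, since a subspace inclusion $X(r)\hookrightarrow X(s)$ induces a subcomplex inclusion on singular realisations, it sidesteps the ordering issues entirely at the cost of producing large (non-finite) complexes.
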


The proof of this lemma relies on the following known result (see \cite{Ma1} Chapter 10, section 6):

\begin{lemma}
    Given a pair $(X,A)$ and a CW approximation of $A$, $\bar{A}\xrightarrow{\alpha_A} A$, then there exists a CW approximation of $X$, $\bar{X}\xrightarrow{\alpha_X} X$ such that $\bar{A}$ is a subcomplex of $\bar{X}$ and $\alpha_X$ restricts to $\alpha_A$ on $\bar{A}$.
\end{lemma}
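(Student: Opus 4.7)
The plan is to build $\bar{X}$ out of $\bar{A}$ by attaching cells of strictly increasing dimension, following the classical Whitehead construction of a CW approximation but starting the induction from $\bar{A}$ instead of from a point (or the empty set). The map $\alpha_X$ will be built simultaneously with the complex, and at each step the attaching data will be chosen precisely so that the partial extension of $\alpha_A$ can be continued. Since new cells will only ever be adjoined outside $\bar{A}$, it will be automatic that $\bar{A}$ sits as a subcomplex of $\bar{X}$ and that $\alpha_X|_{\bar{A}}=\alpha_A$.

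Concretely, I would construct a tower $\bar{A}=\bar{X}_{-1}\subset\bar{X}_0\subset\bar{X}_1\subset\cdots$ of relative CW pairs together with compatible extensions $\alpha_n\colon\bar{X}_n\to X$ of $\alpha_A$, satisfying the inductive hypothesis that $\alpha_n$ is an $n$-equivalence (bijective on $\pi_i$ for $i<n$ and surjective on $\pi_n$, for every basepoint). The base case is handled by adjoining one disjoint $0$-cell to $\bar{A}$ for each path component of $X$ that does not meet $\alpha_A(\bar{A})$ and sending it to a chosen representative point. To pass from $\bar{X}_n$ to $\bar{X}_{n+1}$, I would first pick, over a set of basepoints meeting every component of $\bar{X}_n$, representatives $\phi\colon S^n\to\bar{X}_n$ of generators of $\ker\bigl(\pi_n(\bar{X}_n)\to\pi_n(X)\bigr)$ and attach an $(n+1)$-cell along each; because $\alpha_n\circ\phi$ is null-homotopic in $X$ by construction, a chosen null-homotopy extends $\alpha_n$ over the new cell. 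Then, to restore surjectivity on $\pi_{n+1}$, I would wedge on an $(n+1)$-sphere at each basepoint for every class in $\pi_{n+1}(X)$ not yet hit, defining the extension on each new sphere to realise the missing class. Cellular approximation implies $\alpha_{n+1}$ is an $(n+1)$-equivalence.

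Setting $\bar{X}=\bigcup_n\bar{X}_n$ with the weak topology and $\alpha_X=\operatorname{colim}\alpha_n$ produces the desired CW approximation: $\pi_k(\bar{X})=\pi_k(\bar{X}_k)$ because no cells of dimension $\leq k$ are added after stage $k$, so $\alpha_X$ inherits the $k$-equivalence property of $\alpha_k$ for every $k$ and is therefore a weak equivalence. The only subtle step, and the one that deserves the most care in a written proof, is the first half of the inductive passage: one has to choose kernel generators whose attaching spheres admit compatible null-homotopies in $X$ while simultaneously handling every basepoint and path component. This is a bookkeeping point rather than a conceptual difficulty; the essential input is the standard fact that an $(n+1)$-cell whose attaching map becomes null in $X$ admits a continuous extension of the map to $X$, which is precisely what makes the whole inductive extension of $\alpha_A$ possible.
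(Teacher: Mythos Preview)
The paper does not supply its own proof of this lemma; it simply quotes the statement as a known result from May's \emph{A Concise Course in Algebraic Topology} (Chapter~10, \S6) and then uses it as a black box in the proof of the filtered CW approximation theorem. Your argument is precisely the classical Whitehead construction found in that reference, so in substance you have reproduced the standard proof the paper is pointing to.

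One small index slip is worth tightening: the claim ``$\pi_k(\bar X)=\pi_k(\bar X_k)$'' is not literally what the construction gives, since at stage $k\!\to\!k+1$ you attach $(k+1)$-cells, and those can alter $\pi_k$. The clean statement is that for any $n>k+1$ the inclusion $\bar X_n\hookrightarrow\bar X$ is an isomorphism on $\pi_k$ (only cells of dimension $\geq n+1\geq k+2$ are added thereafter), and $\alpha_n$ is already an isomorphism on $\pi_k$ because $k<n$. This is purely a bookkeeping correction; the idea and the conclusion are unchanged.
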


\begin{proof}[Proof of Lemma \ref{CWspace}]
We will build such an $\bar{X}$, as follows: Firstly, we may assume by the assumption \ref{assump} that $\text{Spec}(\ff_X)=\{r_i:i\in \N\}$. Set $r_0:=\lfloor X \rfloor >-\infty$ such that $X(r)=X_0$ for all $r<r_0
$. Chose some $\bar{X}(r):=\overline{X_0}$ for all $r<r_0$. We now apply the above lemma to the pair $(X(r_0),X(r))$ for all $r<r_0$, to obtain a homotopy commutative diagram
\begin{equation*}
    \begin{tikzcd}
        \bar{X}(r_0)\ar[r,"\alpha(r_0)"] & X(r_0)\\
       \bar{X}(r)=\overline{X_0}\ar[r,"\alpha(r)"]\ar[u,"i_{r,r_0}^{\bar{X}}"] & X(r)\ar[u,swap,"i^X_{r,r_0}"]
    \end{tikzcd}
\end{equation*}

We then set the approximations $(\bar{X}(r)\xrightarrow{\alpha(r)} X(r)):=(\bar{X}(r_0) \xrightarrow{\alpha(r_0)} X(r_0))$ for all $r\in [r_0,r_1)$, and take $i_{r,s}^{\bar{X}}$ to be the identity for all $r,s\in [r_0,r_1)$. Next we apply the above lemma iteratively over each $r_i\in \text{Spec}(\ff_x)$, to obtain diagrams

\begin{equation*}
    \begin{tikzcd}
       \bar{X}(r_{i+1}) \ar[r,"\alpha(r_{i+1})"]&X(r_{i+1}) \\
        \bar{X}(r)\ar[r,"\alpha(r_{i-1})"]\ar[u,"i_{r,r_{i+1}}^{\bar{X}}"]& X(r)\ar[u,swap,"i_{r,r_{i+1}}^X"] 
    \end{tikzcd}
\end{equation*}

with $r\in [r_i,r_{i+1})$ and approximations $(\bar{X}(r)\xrightarrow{\alpha(r)} X(r))=(\bar{X}(r_{i})\xrightarrow{\alpha(r_i)} X(r_i))$. Setting $i_{r,s}^{\bar{X}}$ to be the identity for all $r<s\in [r_{i+1},r_{i+2})$. Finally assume $r<s \in \R$ with $r\in [r_i,r_{i+1})$ and $s\in [r_j,r_{j+1})$, we set 
\begin{equation*}
    i_{r,s}^{\bar{X}}:=i_{r_{j-1},r_{j}}^{\bar{X}}\circ \hdots \circ i_{r_{i+1},r_{i+2}}^{\bar{X}}\circ i_{r,r_{i+1}}^{\bar{X}}
\end{equation*}
Thus, $(\bar{X},\bar{\ff})$ is the desired filtered CW complex.
\end{proof}

\begin{rmk}
The filtered CW complex formed is not unique, but is unique up to filtered homotopy equivalence.
\end{rmk}
\begin{defn}
To every cell of the filtered CW complex $a\subset X$ we can associate a \textbf{cell size} $w(a)\in \text{Spec}(\ff) \cup \{-\infty\}$ given 
    \begin{equation}
        w(a)=\inf\{r_i: a \subset X(r_i)\}
    \end{equation}
    where $w(a)=-\infty$ if $a$ is an eternal cell.
\end{defn}

\begin{rmk}
    Given two filtered CW complexes $X$ and $Y$ the product $X\bar{\times}Y$ is a filtered CW complex with one $k$-cell $c=a\times b$ for every $l$-cell, $a$, and $m$-cell, $b$, in $X$ and $Y$ respectively, such that $l+m=k$. Furthermore, the induced filtration is such that $X \bar{\times} Y(r)$ consists of all cells $a \times b$ such that 
    \begin{equation}
        w(a\times b)= w(a) + w(b) \leq r.
    \end{equation}
\end{rmk}

\begin{prop}
    The smash product $\bar{\Smash}$ on filtered CW-complexes is distributive over the wedge:
    \begin{equation}
        A \bar{\Smash}(X \vee Y) = (A \bar{\Smash} X )\vee (A \bar{\Smash} Y)
    \end{equation}
    Indeed on the total space we know the usual smash product of spaces is distributive over usual wedge product. Thus, we just need to check that the filtrations are equivalent, but this is clear. A cell $c$ in $A \bar{\Smash}(X \vee Y)$ is either a cell of the form $a \Smash x$ or $a \Smash y$ for some cells $a \subset A$, $x\subset X$ and $y\subset Y$. We may assume it to be of the form $a \Smash x$. The weight of this cell is thus given by $w(a \Smash x)= w(a \times x)=w(a)+w(b)$. Note that if $a$ or $x$ is eternal, then we treat $-\infty +k=-\infty$, i.e., $a \times x$ is eternal. The filtration on $(A \bar{\Smash}X) \vee (A \bar{\Smash}Y)$ is such that a cell belonging to the component of $A \bar{\Smash}X$ will have weight $w(a \bar{\Smash}x)=w(a \times x)=w(a)+w(x)$. Thus the filtrations on the two (total) CW complexes agree.
\end{prop}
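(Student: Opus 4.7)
The plan is to split the claim into two independent parts: (i) a total-space statement and (ii) a matching of filtrations at each level $r\in\R$. On underlying pointed spaces (i.e., total spaces), the identity $A\Smash(X\vee Y)\cong (A\Smash X)\vee (A\Smash Y)$ is classical for pointed CW-complexes, via the standard identification $A\Smash B = (A\times B)/(A\vee B)$ and the distributivity of Cartesian product over wedge. So the two sides of the proposition are canonically homeomorphic as pointed spaces; what remains is to verify that the filtrations coincide.

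For the filtration matching I would invoke the cellular description from the preceding remark. Both sides are filtered CW-complexes, and I would set up a canonical bijection between their cell sets: every cell of $A\bar{\Smash}(X\vee Y)$ has the form $a\Smash c$, where $a$ is a cell of $A$ and $c$ is a non-basepoint cell of the wedge $X\vee Y$, i.e., either a cell of $X$ or of $Y$; every cell of $(A\bar{\Smash}X)\vee(A\bar{\Smash}Y)$ is either of the form $a\Smash x$ (in the first wedge summand) or $a\Smash y$ (in the second). Matching these up in the obvious way gives the bijection, and it agrees with the total-space homeomorphism from step (i).

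The crux is then to check that this bijection preserves cell weights. By the remark, the weight of a cell $a\Smash x$ in $A\bar{\Smash}X$ is $w(a\Smash x)=w(a\times x)=w(a)+w(x)$, with the convention $-\infty+k=-\infty$ for eternal cells. A cell of $A\bar{\Smash}(X\vee Y)$ of the form $a\Smash x$ has weight $w(a)+w(x)$ as well, since $X\vee Y$ inherits the cell weight of $X$ on cells coming from $X$ (and symmetrically for $Y$). Because the naive filtered wedge is levelwise, the weight on the right-hand side of a cell in the wedge summand $A\bar{\Smash}X$ is just its weight inside $A\bar{\Smash}X$. Hence cells lying in the $r$-sublevel set coincide on both sides, giving
\begin{equation*}
\bigl(A\bar{\Smash}(X\vee Y)\bigr)(r)=\bigl((A\bar{\Smash}X)\vee (A\bar{\Smash}Y)\bigr)(r)
\end{equation*}
for every $r$, and the filtration structure maps (inclusions) agree tautologically.

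I do not anticipate a serious obstacle; the main bookkeeping subtlety is handling eternal cells consistently (so that the $-\infty$ convention identifies eternal cells on both sides) and ensuring the basepoint cell in $X\vee Y$ is not double-counted when passing to the wedge of smash products. Both issues are routine once cells are tracked explicitly, so the proof is essentially a weight-preserving cell census combined with the classical unfiltered distributivity.
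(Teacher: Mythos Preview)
Your proposal is correct and follows essentially the same route as the paper: reduce to the classical distributivity on total spaces, then verify the filtrations agree by a cell-by-cell weight computation using $w(a\Smash x)=w(a)+w(x)$ (with the $-\infty$ convention for eternal cells). Your added care about the cell bijection and basepoint bookkeeping is fine but not strictly needed beyond what the paper sketches.
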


\begin{rmk}\label{sizeofsmashmorphism}
    If $f: X \to Y$ and $f':X' \to Y'$ are morphisms of filtered CW complexes, then 
    \begin{align*}
        w\big( (f\bar{\Smash} f')(a \bar{\Smash}b)\big) = w(a)+\lceil f\rceil + w(b) + \lceil f' \rceil= w(a\times b) + \lceil f \rceil + \lceil f' \rceil 
    \end{align*}
    Thus $\lceil f \bar{\Smash}f'\rceil = \lceil f \rceil + \lceil f' \rceil$.
\end{rmk}

\begin{defn}
   Given a filtered CW complex $\bar{X}$, its \textbf{size polynomial} is given by
    \begin{equation}
       \lambda( \bar{X})_{\text{CW}}:=\sum_{a\in \text{Cells}_*(X)} t^{w(a)} 
    \end{equation}
 Where $\text{Cells}_*(X)$ denotes the cells of $X$ without including the basepoint $*$. 
Furthermore, we consider the derivative 

\begin{equation}
  \Lambda_{\text{CW}}(\bar{X}):=  \frac{d}{dt}\big(\lambda_{\text{CW}}(\bar{X})\big)=\sum_{a\in \text{Cells}_*(\bar{X})}w(a) \cdot t^{w(a)-1}
\end{equation}

\end{defn}

\begin{rmk}
    If $a\in \text{Cells}_*(X)$ is such that $w(a)=-\infty$, i.e., $a\in X(r)$ for all $r\in \R$, then we set $t^{-\infty}=0$, i.e., do not include this cell in the count. 
\end{rmk}

Evaluation at $t=1$ of the size polynomial gives the number of cells in $\bar{X}$ (excluding the basepoint), which we denote by 
 \begin{equation}
|\bar{X}|_0=\text{ev}_{t=1}\big(\lambda_{\text{CW}}(X)\big)
 \end{equation}
 and evaluation at $t=1$ of its derivative gives the weighted count of cells denoted by:

\begin{equation}
|\bar{X}|=\text{ev}_{t=1}\big(\Lambda_{\text{CW}}(X)\big).
\end{equation}

For a general filtered space $X$ we define
    \begin{align}
        |X|_0&:=\inf\{|\bar{X}|_0:\bar{X} \text{ is a filtered CW approximation of } X\}\\
        |X|&:=\inf\{|\bar{X}|:\bar{X} \text{ is a filtered CW approximation of } X\}.
    \end{align}

\begin{eg}
    Take $S^k$ to be the filtered $k$-sphere with filtration given by
    \begin{equation*}
        S^k(r)=\begin{cases}
            * & r<t\\
            S^k & r\geq t
        \end{cases}
    \end{equation*}
    then $|S^k|_0=1$ and  $|S^k|=t$ as we can view $S^k$ as the CW complex with a zero-cell (the basepoint) and one $k$-cell which attaches at level $t$. 
\end{eg}

\begin{eg}\label{torus example}
    Consider a torus $T$ with height function such that we have a filtered spaces with 
    \begin{equation*}
        T(r)\simeq\begin{cases}
            * & r<a\\
            S^1 & a\leq r <b\\
            S^1 \vee S^1 & b\leq r < c\\
            T & r\geq c
        \end{cases}
    \end{equation*}
    This CW complex for a torus has one $0$-cell (the basepoint), two $1$-cells and one $2$-cell. We can attach each of these cells at appropriate times to find $|T|_0=3$ and  $|T|=a +b +c$, since we can take one $1$-cell with $w=a$, one $1$-cell with $w=b$ and a two cell with $w=c$. 

  \begin{figure}[H]
    \centering
    \begin{subfigure}[b]{0.49\linewidth}
         \includegraphics[width=\textwidth,trim={0cm 0cm 0cm 0cm},clip]{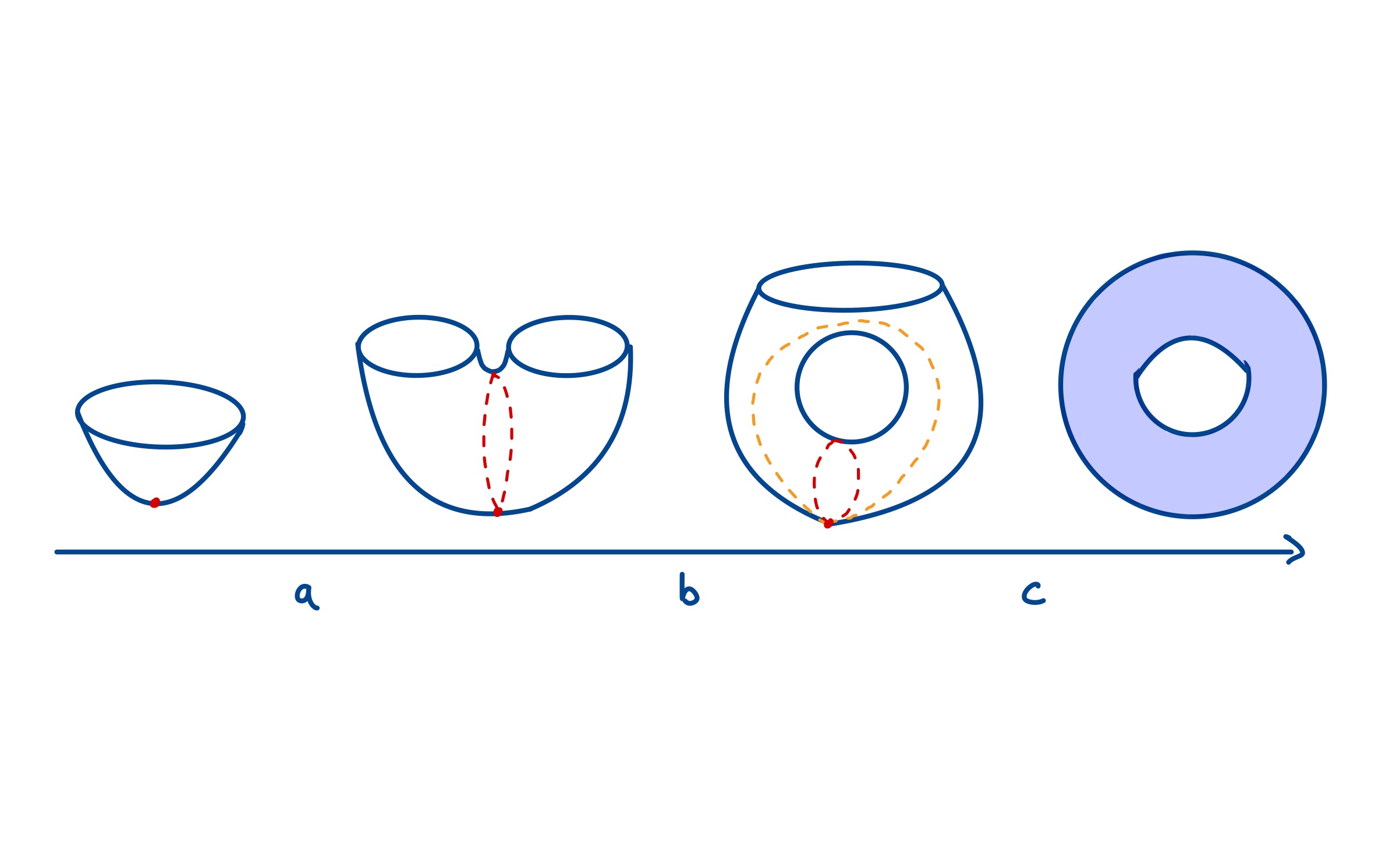}
    \end{subfigure}
    \begin{subfigure}[b]{0.49\linewidth}
         \includegraphics[width=\textwidth,trim={5cm 5cm 5cm 5cm},clip]{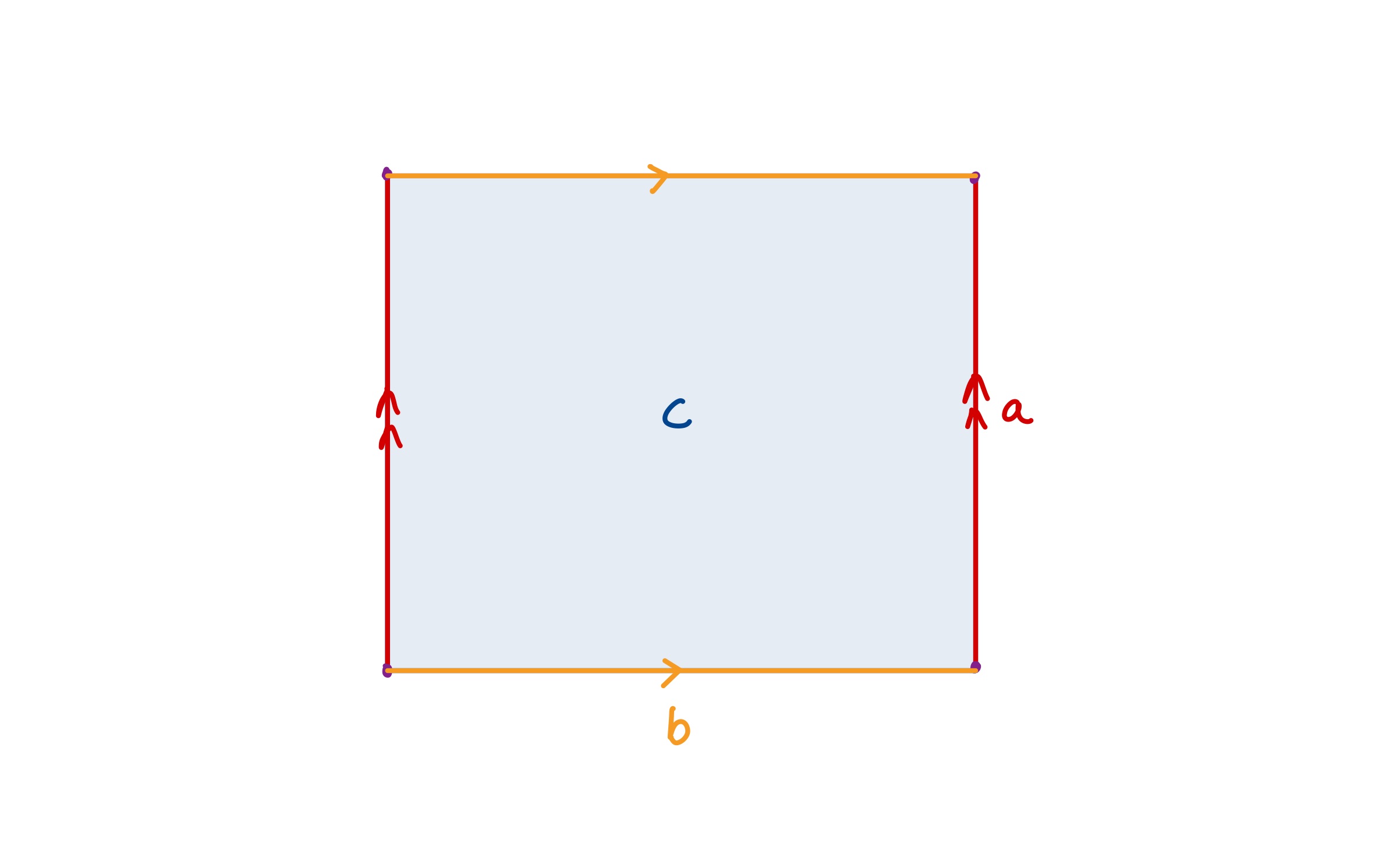}
    \end{subfigure}
   \caption{A generic picture of $T(r)$ in blue for various values of $r$ with a deformation retract to a corresponding CW complex coloured in red/orange (left). Along with a depiction of a filtered CW complex with cell weights labeled which is filtered weak equivalent (right).  }
    \label{torus}
\end{figure}

\end{eg}

The size polynomial and, in turn, its derivative depend on the `filtered cell decomposition'. We will see, however, that there is a weighted version of the Euler characteristic that is a filtered homotopy invariant. Note that a similar object has been studied previously by various authors (for example see \cite{DG} and \cite{HL}).

\begin{defn}
    Given a filtered CW complex, we define its \textbf{Euler polynomial} as 

    \begin{equation}
        \hat{\chi}_{\text{CW}}(X)= \sum_{a\in \text{Cells}_*(X)} (-1)^{|a|}\cdot t^{w(a)}
    \end{equation}
    where $t$ is some formal variable and $|c|$ denotes the dimension of the cell $c$. 
\end{defn}

\begin{rmk}
    Again, we treat cells $a\in \text{Cells}_*(X)$ with $w(a)=-\infty$ to contribute $t^{-\infty}=0$ to this count. We can think of this as a Euler characteristic relative to the `eternal' cells of the filtered complex. 
\end{rmk}
\begin{rmk}
    Evaluation of $\hat{\chi}_{\text{CW}}(X)$ at $t=1$ is precisely the Euler characteristic of (the total space) $X$, minus $1$, as we do not include the basepoint:
    \begin{equation}\text{ev}_{t=1}\big( \hat{\chi}_{\text{CW}}(X)\big)=\chi(X)-1
    \end{equation}
    Note that this is often referred to as the reduced Euler characteristic. Moreover, if we set \begin{equation}
 \hat{\chi}_{\text{CW}}^{\leq r}(X):=\sum_{a\in \text{Cells}_*(X)\text{ with } w(a) \leq r}(-1)^{|a|}\cdot t^{w(a)}
    \end{equation}
    then $\hat{\chi}_{\text{CW}}^{\leq r}(X)$ evaluated at $t=1$ recovers $\chi(X(r))-1$.
\end{rmk}
\begin{prop}\label{propeul}
    If $X\simeq X'$ are two filtered CW complexes, then $\chi^{\leq r}_{\text{CW}}(X)=\chi^{\leq r}_{\text{CW}}(X')$.
    \begin{proof}
        By assumption, the homotopy type of filtration levels can only change at finitely many points, and if $X\simeq X'$, then $\text{Spec}(X)=\text{Spec(X')}$. Furthermore, for each $s\in \R$ we have $X(r)\simeq X'(r)$, hence 
        \begin{equation*}
   \text{ev}_{t=1}\big(\hat{\chi}^{\leq r}_{\text{CW}}(X)\big)=\chi(X(r))-1=\chi(X'(r))-1=\big(  \text{ev}_{t=1}\hat{\chi}^{\leq r}_{\text{CW}}(X)\big).
        \end{equation*}
        Set $D^{\leq r}(X,X'):=\hat{\chi}^{\leq r}_{\text{CW}}(X)-\hat{\chi}^{\leq r}_{\text{CW}}(X')$, then $\text{ev}_{t=1}D^{\leq r}(X,X')=0$. If we assume that $D^{\leq r}(X,X')\neq 0$, then there must exist some $\pm c \cdot  t^s$ term (for $s\leq r$). But this would imply that $\text{ev}_{t=1}\hat{\chi}^{\leq s}_{\text{CW}}(X) \neq \text{ev}_{t=1}\hat{\chi}^{\leq s}_{\text{CW}}(X')$, i.e., $\chi(X(s)) \neq \chi(X'(s))$ and thus $X(s)$ is not homotopy equivalent to $ X'(s)$, which gives a contradiction.
    \end{proof}
\end{prop}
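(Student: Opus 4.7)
The plan is to reduce everything to the observation made in the preceding remark that $\operatorname{ev}_{t=1}\bigl(\hat{\chi}^{\leq s}_{\text{CW}}(Y)\bigr)=\chi(Y(s))-1$, combined with the fact that a zero-shift filtered homotopy equivalence restricts to an ordinary homotopy equivalence on each filtration level.

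First, I would unpack $X\simeq X'$: there exist morphisms $f\colon X\to X'$ and $g\colon X'\to X$ of shift zero together with filtered homotopies $f\circ g\sim 1_{X'}$ and $g\circ f\sim 1_X$. Because the reduced filtered cylinder satisfies $\text{Cyl}_*(X)(s)=\text{Cyl}_*(X(s))$, evaluating at level $s$ yields mutually inverse homotopy equivalences between $X(s)$ and $X'(s)$. Consequently $\chi(X(s))=\chi(X'(s))$ for every $s\in\R$, and by the remark just above the statement this gives $\operatorname{ev}_{t=1}\bigl(\hat{\chi}^{\leq s}_{\text{CW}}(X)\bigr)=\operatorname{ev}_{t=1}\bigl(\hat{\chi}^{\leq s}_{\text{CW}}(X')\bigr)$ for all $s\leq r$.

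Next, set $D^{\leq s}(X,X'):=\hat{\chi}^{\leq s}_{\text{CW}}(X)-\hat{\chi}^{\leq s}_{\text{CW}}(X')$. This is a $\Z$-linear combination of monomials $t^{s_i}$ whose exponents $s_i$ lie in the (at most countable, and bounded below by $\min\{\lfloor X\rfloor,\lfloor X'\rfloor\}$) set $\text{Spec}(X)\cup\text{Spec}(X')$, with only finitely many exponents $\leq r$ contributing. Assume for contradiction that $D^{\leq r}(X,X')\neq 0$, and let $s_0\leq r$ be the smallest exponent with a nonzero coefficient $c$. By minimality, $D^{\leq s}(X,X')=0$ for all $s<s_0$, and the polynomial acquires exactly the single monomial $c\cdot t^{s_0}$ as $s$ crosses $s_0$. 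Hence $D^{\leq s_0}(X,X')=c\cdot t^{s_0}$ with $c\neq 0$, so $\operatorname{ev}_{t=1}D^{\leq s_0}(X,X')=c\neq 0$, contradicting the first paragraph.

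The main obstacle is really the bookkeeping around the discrete exponents: one has to be sure that a "smallest exponent" is well-defined. This is guaranteed by the assumption following Section~2.3 restricting filtrations so that the spectral set is discrete and bounded below, so the induction step is legitimate. Beyond this, the argument is purely algebraic on the polynomial side and requires no extra topological input.
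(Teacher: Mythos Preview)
Your proof is correct and follows essentially the same route as the paper's: use that a zero-shift filtered homotopy equivalence restricts to a levelwise homotopy equivalence, deduce $\operatorname{ev}_{t=1}D^{\leq s}(X,X')=0$ for every $s$, and then derive a contradiction from a hypothetical nonzero monomial in $D^{\leq r}$. The only difference is that you make the contradiction step explicit by isolating the \emph{smallest} exponent $s_0$ with nonzero coefficient, whereas the paper leaves this implicit; your version is in fact the cleaner justification of that step.
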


\begin{rmk}
    By taking an $r$ large enough we find $\hat{\chi}_\text{CW}(X)= \hat{\chi}_\text{CW}(X')$ for $X \simeq X'$.
\end{rmk}
We can take the derivative of $\hat{\chi}^{\leq r}_\text{CW}$, to form the \textbf{weighted Euler polynomial} giving 
\begin{equation}
   \hat{\mathcal{W}}^{\leq r}_{\text{CW}}(X):= \frac{d}{dt}\big( \hat{\chi}^{\leq r}_\text{CW}(X)\big)= \sum_{a \in \text{Cells}_*(X) \text{ with } w(a)\leq r}(-1)^{|a|}\cdot w(a) \cdot t^{w(a)-1}
\end{equation}
by proposition \ref{propeul}, $\hat{\mathcal{W}}_\text{CW}^{\leq r}$  is an invariant of the filtered homotopy type of $X$. Furthermore, the value

\begin{equation}
    \mathcal{W}^{\leq r}(X):=\text{ev}_{t=1} \mathcal{W}_\text{CW}^{\leq r}(X) =\sum_{a \in \text{Cells}_*(X) \text{ with } w(a)\leq r} (-1)^{|a|}\cdot w(a)
\end{equation}

is a filtered homotopy invariant for all $r\in \R$. By taking an $r$ large enough, we obtain $\mathcal{W}(X)$ that we will refer to as the \textbf{weighted Euler characteristic} of $X$.

\begin{eg}
    Consider the filtered torus with height function as described in example \ref{torus example}. Then we find 
    \begin{equation*}
        \hat{\mathcal{W}}_\text{CW}(T)= -at^{a-1}-bt^{b-1}+ct^{c-1}
    \end{equation*}
\begin{equation*}
      \mathcal{W}(T)= c-(a+b)
\end{equation*}
\end{eg}

\begin{defn}
    Given two filtered CW-complexes with homotopy equivalent total space, we define their \textbf{matching number} as follows: First we consider $\hat{\chi}_{\text{CW}}(X)+\hat{\chi}_{\text{CW}}(X')$, then we take mod-$2$, to obtain some polynomial equivalent to something of the form $\sum_i t^{r_i}$, i.e., make every coefficient $+1$ .We then evaluate this polynomial over $\Z$ at $t=1$, and divide by two. I.e.
    \begin{equation}
        \text{Match}(X_,X'):=\frac{1}{2}\text{ev}^\Z_{t=1}\big(\hat{\chi}_{\text{CW}}(X)- \hat{\chi}_{\text{CW}}(X')\mod 2\big)
    \end{equation}
\end{defn}

\begin{rmk}
    The matching number for two filtrations on a CW complex, say $X$ and $X'$ (with equivalent total spaces), is independent of filtered homotopy equivalence. That is, if $X\simeq \bar{X}$ and $X'\simeq \bar{X}'$, then 
    \begin{equation*}
        \text{Match}(X,X') = \text{Match}(\bar{X},\bar{X}').
    \end{equation*}
This follows from $\hat{\chi}_{\text{CW}}$ being a filtered homotopy invariant.
\end{rmk}
The matching number is calculating (half) the number of `mismatched' cells in the two filtrations of the total space. We can think of this as a function $\text{Match}:\text{Filt}_h(X) \times \text{Filt}_h(X) \to \Z$ from the set of pairs of filtration on $X$ up to filtered homotopy equivalence. Note that 
\begin{equation*}
\text{ev}^\Z_{t=1}\big(\hat{\chi}_{\text{CW}}(X)+ \hat{\chi}_{\text{CW}}(X')\mod 2\big) \in 2\Z
\end{equation*}
This can be seen from $\chi(X)=\chi(X')$ hence the difference $\hat{\chi}_\text{CW}(X)-\hat{\chi}_{\text{CW}}(X')$ must have an equal number of terms with $+1$ coefficient as $-1$ coefficient, thus an even number of terms.

\begin{lemma}
    Assume $f,f':M \to \R$ are two bounded, Morse functions on a compact manifold $M$. Set, $M_f$ and $M_{f'}$ to be (some choice of) filtered CW-complexes approximating the filtered space induced from the sub-level sets of $f$ and $f'$ respectively. If $\sup_{x\in M} |f(x)-f'(x)|< \epsilon $ then 
    
    \begin{equation}\frac{|\mathcal{W}(X_f)-\mathcal{W}(X_{f'})|}{\text{Match}(X_f,X_{f'})}<\epsilon
    \end{equation}

    \begin{proof}
        First note that $\chi_\text{CW}(X_f)= \chi_{\text{CW}}(X_{f'})$ as the topology of the total space does not change. Thus we have 
        \begin{equation*}
            \text{ev}_{t=1}\big(\hat{\chi}_{\text{CW}}(X_f)\big)= \text{ev}_{t=1}\big(\hat{\chi}_{\text{CW}}(X_f)\big)
        \end{equation*}
Therefore, the difference polynomial $D(X_f,X_{f'})$ evaluated at $t=1$ gives zero. Therefore, in general $D(X_f,X_{f'})$ takes the following form 

\begin{equation*}
    D(X_f,X_{f'})=\sum_{i=1,\hdots, n} t^{a_i} - t^{b_i}
\end{equation*}
Calculating its derivative we have
\begin{equation*}
    \frac{d}{dt}D(X_f,X_{f'})= \sum_{i=1\hdots, n} a_i \cdot t^{a_i-1}- b_i \cdot t^{b_i-1} 
\end{equation*}
And evalutating at $t=1$ we obtain 
\begin{equation*}
  |\mathcal{W}(X_f)-\mathcal{W}(X_{f
'})|= |\text{ev}_{t=1}\big(  \frac{d}{dt}D(X_f,X_{f'})\big)|= |\sum_{i=1\hdots, n} a_i - b_i|\leq \sum_{i=1\hdots, n}|a_i-b_i|
\end{equation*}
We can choose to order the $a_i$s and $b_i$s such that $|a_i-b_i|< \epsilon$ for every $i$. Hence we have

\begin{equation*}
     |\mathcal{W}(X_f)-\mathcal{W}(X_{f
'})|\leq \sum_{i=1\hdots, n}|a_i-b_i| \leq n\cdot \epsilon
\end{equation*}

But we realise $n$ is exactly $\text{Match}(X_f,X_{f'})$, thus we obtain the result.

    \end{proof}

\end{lemma}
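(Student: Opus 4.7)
The plan is to exploit the fact that both filtrations come from the same underlying manifold and then to reduce the inequality to a pairing argument between critical values. First I would observe that because $M$ is a fixed compact manifold, the total spaces of $X_f$ and $X_{f'}$ are both $M$, so
\begin{equation*}
\text{ev}_{t=1}\hat{\chi}_{\text{CW}}(X_f) = \chi(M)-1 = \text{ev}_{t=1}\hat{\chi}_{\text{CW}}(X_{f'}).
\end{equation*}
Consequently the difference polynomial $D := \hat{\chi}_{\text{CW}}(X_f) - \hat{\chi}_{\text{CW}}(X_{f'})$ vanishes at $t=1$. After cancelling common monomials and grouping by sign, $D$ takes the canonical form $\sum_{i=1}^{n}(t^{a_i}-t^{b_i})$, where the number of positive and negative terms must coincide precisely because $D(1)=0$. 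This common count $n$ is, by the definition of the matching number (where the mod-$2$ reduction identifies the surviving monomials), exactly $\text{Match}(X_f,X_{f'})$.

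Next I would differentiate $D$ with respect to $t$ and evaluate at $t=1$ to obtain $\mathcal{W}(X_f)-\mathcal{W}(X_{f'}) = \sum_{i=1}^{n}(a_i-b_i)$, and apply the triangle inequality to get $|\mathcal{W}(X_f)-\mathcal{W}(X_{f'})| \leq \sum_{i=1}^{n}|a_i-b_i|$. At this point the algebraic content of the argument is complete; everything reduces to finding a pairing of the exponents $a_i,b_i$ for which each $|a_i-b_i|<\epsilon$, since summing gives $<n\epsilon$ and dividing by $n=\text{Match}(X_f,X_{f'})$ yields the stated inequality.

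The main obstacle is precisely the construction of this pairing. By Proposition \ref{propeul} the polynomial $\hat{\chi}_{\text{CW}}$ depends only on the filtered homotopy type, so I may replace $X_f$ and $X_{f'}$ by the Morse CW-complexes associated to $f$ and $f'$, in which each cell corresponds to a critical point and carries weight equal to the critical value. I would then argue, by a bottleneck-stability-type matching for Morse functions, that the hypothesis $\sup_{x\in M}|f(x)-f'(x)|<\epsilon$ forces the critical values contributing to the uncancelled monomials of $D$ to be pairable in matching dimension within distance strictly less than $\epsilon$ — intuitively, interpolating linearly between $f$ and $f'$ moves critical values by at most $\epsilon$, and critical points of opposite sign can only annihilate each other, so the surviving ones track each other across the interpolation. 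Granting this pairing, the bound from the previous paragraph immediately gives $|\mathcal{W}(X_f)-\mathcal{W}(X_{f'})|<n\epsilon = \text{Match}(X_f,X_{f'})\cdot \epsilon$, which rearranges to the claim.
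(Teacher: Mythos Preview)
Your proposal is correct and follows essentially the same route as the paper: establish $D(1)=0$, write $D=\sum_{i=1}^n(t^{a_i}-t^{b_i})$, differentiate and evaluate at $t=1$, apply the triangle inequality, and then bound via a pairing with $|a_i-b_i|<\epsilon$ and identify $n$ with $\text{Match}(X_f,X_{f'})$. The only difference is that the paper simply asserts the existence of the pairing in one sentence, whereas you flag it as the main obstacle and sketch a Morse-stability/interpolation justification for it; your elaboration is in the spirit of the paper's claim but is not something the paper itself supplies.
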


\begin{prop}\label{propwedge}
    The weighted Euler polynomial is additive with respect to $\vee$: 
    \begin{align}
     \hat{\chi}_{\text{CW}}(X \vee Y)=&  \hat{\chi}_{\text{CW}}(X) +  \hat{\chi}_{\text{CW}}(Y)
    \end{align}
    \begin{proof}
       This is an easy computation. We have $\text{Cells}_*(X \vee Y)= \text{Cells}_*(X) \cup \text{Cells}_*(Y)$ thus 
        \begin{align*}
        \hat{\chi}_{\text{CW}}(X \vee Y)&= \sum_{a\in \text{Cells}_*(X \vee Y)} (-1)^{|a|}\cdot t^{w(a)}\\
        &=\sum_{a\in \text{Cells}_*(X)} (-1)^{|a|}\cdot t^{w(a)} + \sum_{b\in \text{Cells}_*(Y)} (-1)^{|b|}\cdot t^{w(b)}\\
        &= \hat{\chi}_{\text{CW}}(X)+ \hat{\chi}_{\text{CW}}(Y).
        \end{align*}
        
    \end{proof}
\end{prop}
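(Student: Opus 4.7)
The plan is to unwind the definition of $\hat{\chi}_{\text{CW}}$ and reduce the claim to a purely combinatorial fact about the cell structure of $X\vee Y$. Since the wedge sum in $\ff\text{Top}_*$ is defined levelwise by $(X\vee Y)(r) = X(r)\vee Y(r)$, and since a wedge of pointed CW complexes has cells given by the disjoint union of the non-basepoint cells of each summand (glued at the common basepoint), I would first observe the cell-level identification
\begin{equation*}
\text{Cells}_*(X\vee Y) = \text{Cells}_*(X) \sqcup \text{Cells}_*(Y).
\end{equation*}
This uses that the basepoint is excluded from $\text{Cells}_*$, so no double-counting arises from the identified basepoints.

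Next I would check that the weight function is preserved under this inclusion. Given a cell $a \subset X$, its weight in $X$ is $w_X(a) = \inf\{r_i : a \subset X(r_i)\}$, and since $(X\vee Y)(r)$ contains $X(r)$ as a subcomplex (via the canonical inclusion fixing the basepoint), we have $a \subset X(r_i)$ if and only if $a \subset (X\vee Y)(r_i)$ as a cell of the $X$-summand. Hence $w_{X\vee Y}(a) = w_X(a)$, and similarly for cells of $Y$. The dimension $|a|$ is obviously intrinsic to the cell and does not depend on which complex we regard it as sitting inside.

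With these two observations in hand, the proof reduces to splitting the defining sum. I would write
\begin{align*}
\hat{\chi}_{\text{CW}}(X\vee Y) &= \sum_{a\in \text{Cells}_*(X\vee Y)} (-1)^{|a|}\cdot t^{w(a)} \\
&= \sum_{a\in \text{Cells}_*(X)} (-1)^{|a|}\cdot t^{w(a)} + \sum_{b\in \text{Cells}_*(Y)} (-1)^{|b|}\cdot t^{w(b)} \\
&= \hat{\chi}_{\text{CW}}(X) + \hat{\chi}_{\text{CW}}(Y),
\end{align*}
where in passing to the second line I use the partition of cells above and the preservation of dimension and weight.

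There is essentially no hard step here: the argument is bookkeeping. The only mild subtlety worth flagging is the eternal-cell convention $t^{-\infty}=0$ from the preceding remark, but this causes no trouble since any cell which is eternal in $X$ remains eternal when viewed inside $X\vee Y$ (and likewise for $Y$), so the terms that are dropped on the left are exactly the terms dropped on the right.
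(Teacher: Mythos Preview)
Your proof is correct and follows exactly the same approach as the paper: identify $\text{Cells}_*(X\vee Y)$ with the disjoint union $\text{Cells}_*(X)\sqcup\text{Cells}_*(Y)$ and split the defining sum. You simply spell out more carefully why the weight and dimension of each cell are unchanged under the inclusions, and why the eternal-cell convention is compatible with the splitting---details the paper leaves implicit.
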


  \begin{rmk}
         The collection of $k$-cells in $X \times Y$ is the collection of products of $l$-cells in $X$ with $m$-cells in $Y$, with $l+m=k$. Assume $a$ is such an $l$-cell of $X$ and $b$ is a $m$-cell of $Y$, then 
        \begin{equation}
            w(a \times b)= \max\{w(a),w(b)\}
        \end{equation}
        Indeed, $a\times b$ is a $k$-cell of $(X \times Y)(r)$ iff $a$ is an $l$-cell of $X(r)$ and $b$ is an $m$-cell of $Y(r)$ hence $r\geq \max\{w(a),w(b)\}$. It follows that 
        \begin{align*}
            \hat{\chi}_{\text{CW}}(X \times Y)=& \sum_{a\times b\in \text{Cells}_*(X)\times \text{Cells}_*( Y)} (-1)^{|a \times b|}\cdot t^{\max\{w(a),w(b)\}}\\
            =&\sum_{a\in \text{Cells}_*(X)}(-1)^{|a|}\sum_{b\in \text{Cells}_*(Y)}(-1)^{|b|}\cdot t^{\max\{w(a),w(b)\}}.
        \end{align*}
    And 
        \begin{align*}
           \hat{ \mathcal{W}}(X \times Y)=&\frac{d}{dt}\sum_{a\in \text{Cells}_*(X)}(-1)^{|a|}\sum_{b\in \text{Cells}_*(Y)}(-1)^{|b|}\cdot t^{\max\{w(a),w(b)\}}\\
            =& \sum_{a\in \text{Cells}_*(X)}(-1)^{|a|}\sum_{b\in \text{Cells}_*(Y)}(-1)^{|b|}\cdot \max\{w(a),w(b)\}\cdot t^{\max\{w(a),w(b)\}-1}
            \end{align*}

    In general we can say much about the relations of $\mathcal{W}(X \times Y)$ with $\mathcal{W}(X)$ and $\mathcal{W}(Y)$. However, if we replace $\times$ with $\bar{\times}$ we have the following results.
        \end{rmk}

\begin{prop}
    $\hat{\chi}_{\text{CW}}(X \bar{\times} Y)=\hat{\chi}_{\text{CW}}(X) \cdot \hat{\chi}_{\text{CW}}(Y)$.
    \begin{proof}
         Again the collection of $k$-cells in $X \bar{\times} Y$ is the collection of $a \times b$-cells where $a$ is an $l$-cell, $b$ is an $m$-cell and $k=l+m$. Now however the weight of such cells is given by
       \begin{equation}
           w(a\times b)=w(a)+w(b)
       \end{equation}

      Cells of the form $* \times b$ and $a \times *$ are eternal and thus do not contribute to the count. Thus we obtain

      \begin{align*}
          \hat{\chi}_{\text{CW}}(X \bar{\times} Y) = &\sum_{a\in \text{Cells}_*(X)}\sum_{b\in \text{Cells}_*(Y)}(-1)^{|a|+|b|}\cdot t^{w(a)+w(b)}\\
          =& \sum_{a\in \text{Cells}_*(X)}\sum_{b\in \text{Cells}_*(Y)}(-1)^{|b|}\cdot t^{w(b)} \cdot (-1)^{|a|} \cdot t^{w(a)}\\
          =&\big( \sum_{a\in \text{Cells}_*(X)} (-1)^{|a|} \cdot t^{w(a)}\big) \cdot \big(\sum_{b\in \text{Cells}_*(Y)}(-1)^{|b|}\cdot t^{w(b)} \big)\\
         =& \hat{\chi}_{\text{CW}}(X) \cdot \hat{\chi}_{\text{CW}}(Y)
      \end{align*}

    \end{proof}
\end{prop}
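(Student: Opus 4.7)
The plan is to invoke the cell structure of $X \bar{\times} Y$ described in the preceding remark: cells are indexed by pairs $(a,b)$ with $a$ a cell of $X$ and $b$ a cell of $Y$, the dimension of the product cell is $|a|+|b|$, and crucially its weight is additive, $w(a \times b) = w(a) + w(b)$. This multiplicative behavior of weights on $\bar{\times}$ is precisely what turns a sum over pairs into a product of sums, mirroring how the classical Euler characteristic is multiplicative under Cartesian product of CW complexes.

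First I would write out $\hat{\chi}_{\text{CW}}(X \bar{\times} Y)$ as an indexed sum over the non-basepoint cells of the product, and observe that every non-basepoint cell is of the form $a \times b$ for some (possibly basepoint) cells $a \subset X$, $b \subset Y$. Next I would partition these cells into three classes: (i) both $a$ and $b$ are non-basepoint cells, (ii) $a = *_X$ and $b \neq *_Y$, and (iii) $a \neq *_X$ and $b = *_Y$. For classes (ii) and (iii), the resulting cell lies in the subspace $X \vee Y \subset X \bar{\times} Y$, which by construction of $\bar{\times}$ sits inside $(X \bar{\times} Y)(r)$ for every $r$; hence these cells are eternal, $w = -\infty$, and by the convention $t^{-\infty}=0$ they drop out of the sum.

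After discarding the eternal contribution, only pairs $(a,b) \in \text{Cells}_*(X) \times \text{Cells}_*(Y)$ remain. Using $(-1)^{|a|+|b|} = (-1)^{|a|}(-1)^{|b|}$ and $t^{w(a)+w(b)} = t^{w(a)} \cdot t^{w(b)}$, the double sum separates into a product of two single sums, which are exactly $\hat{\chi}_{\text{CW}}(X)$ and $\hat{\chi}_{\text{CW}}(Y)$. (One should also verify that the convention $t^{-\infty}=0$ is consistent with this factorization, i.e.\ that if, say, $a$ is an eternal cell of $X$ and $b$ is any non-basepoint cell of $Y$, then the product cell $a \times b$ is eternal in $X \bar{\times} Y$; this follows because $a$ lies in $X(r)$ for all $r$, so $a \times b \subset X(r) \times Y(w(b)) \subset (X \bar{\times} Y)(w(b))$ for every $r$, giving $w(a \times b) = -\infty$, matching $(-\infty) + w(b) = -\infty$ under the convention.)

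The main obstacle, though essentially bookkeeping, is the careful treatment of the eternal cells and the $t^{-\infty}=0$ convention: one must check that the basepoint-times-cell products really are eternal in the $\bar{\times}$ structure (not just in the naive $\times$ structure), which is exactly where the definition $(X \bar{\times} Y)(r) = \bigcup_{s+t=r} X(s) \times Y(t)$ is used, via the fact that $*_X \in X(s)$ for all $s$. Once this is in place, the algebraic manipulation is a formal factorization and no further topology is needed.
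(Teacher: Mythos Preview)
Your proposal is correct and follows essentially the same argument as the paper: identify the product cells, use the additivity $w(a\times b)=w(a)+w(b)$ specific to $\bar{\times}$, discard the eternal cells coming from the basepoints, and factor the resulting double sum. Your treatment is in fact slightly more careful than the paper's, since you explicitly verify that the $t^{-\infty}=0$ convention is compatible with the factorization when one of $a,b$ is a non-basepoint eternal cell.
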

  
\begin{cor}
    The weighted Euler characteristic and classical Euler characteristic satisfy:
    \begin{equation}
        \mathcal{W}(X \bar{\times} Y) =\mathcal{W}(X)\cdot \chi(Y) + \mathcal{W}(Y) \cdot \chi(X).
    \end{equation}
    \begin{proof}
      Using the previous proposition we have:
       \begin{align*}
           \hat{\mathcal{W}}(X \bar{\times} Y)= &\frac{d}{dt} \hat{\chi}_{\text{CW}}(X \bar{\times} Y)= \frac{d}{dt}\big(\hat{\chi}_{\text{CW}}(X) \cdot \hat{\chi}_{\text{CW}}(Y)\big)\\
           =& \frac{d}{dt}\hat{\chi}_{\text{CW}}(X) \cdot \hat{\chi}_{\text{CW}}(Y)+ \hat{\chi}_{\text{CW}}(X) \cdot  \frac{d}{dt}\hat{\chi}_{\text{CW}}(Y)\\
           =& \hat{\mathcal{W}}_{\text{CW}}(X) \cdot \hat{\chi}_{\text{CW}}(Y)+ \cdot \hat{\chi}_{\text{CW}}(X)\cdot \hat{\mathcal{W}}_{\text{CW}}(Y).
       \end{align*}
    \end{proof}
\end{cor}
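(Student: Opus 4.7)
The plan is to leverage the preceding proposition, which establishes $\hat{\chi}_{\text{CW}}(X \bar{\times} Y)=\hat{\chi}_{\text{CW}}(X)\cdot\hat{\chi}_{\text{CW}}(Y)$, together with the purely formal facts that $\hat{\mathcal{W}}_{\text{CW}}$ is the derivative in $t$ of $\hat{\chi}_{\text{CW}}$ and that $\mathcal{W}$ is obtained from $\hat{\mathcal{W}}_{\text{CW}}$ by evaluation at $t=1$. Everything should then come out of the Leibniz rule; there is no need to reopen the cell-by-cell computation that produced the product formula.

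The first step is to differentiate both sides of the multiplicative identity with respect to the formal variable $t$. Applying the product rule yields the polynomial equality
\[
\hat{\mathcal{W}}_{\text{CW}}(X\bar{\times}Y)=\hat{\mathcal{W}}_{\text{CW}}(X)\cdot\hat{\chi}_{\text{CW}}(Y)+\hat{\chi}_{\text{CW}}(X)\cdot\hat{\mathcal{W}}_{\text{CW}}(Y),
\]
which is an identity in the ring of finite Laurent-style sums $\Lambda_P$. The second step is to evaluate at $t=1$. By definition $\mathrm{ev}_{t=1}\hat{\mathcal{W}}_{\text{CW}}(Z)=\mathcal{W}(Z)$, and by the remark following the definition of the Euler polynomial, $\mathrm{ev}_{t=1}\hat{\chi}_{\text{CW}}(Z)$ recovers the (reduced) Euler characteristic $\chi(Z)$ in the pointed convention used throughout. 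Substituting these identifications on both sides gives the desired formula
\[
\mathcal{W}(X\bar{\times}Y)=\mathcal{W}(X)\cdot\chi(Y)+\mathcal{W}(Y)\cdot\chi(X).
\]

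Since this reduces to one differentiation plus one evaluation, there is no genuine obstacle. The only point requiring care is the bookkeeping of reduced-versus-unreduced Euler characteristic: the sums defining $\hat{\chi}_{\text{CW}}$ omit the basepoint cell, so evaluation at $t=1$ produces $\chi-1$ rather than $\chi$ in the unreduced convention. This must be handled consistently with whichever convention is used in the statement, but it is a matter of notation rather than mathematical content. Beyond this, the corollary is a formal consequence of the multiplicativity of $\hat{\chi}_{\text{CW}}$ over $\bar{\times}$ and the product rule.
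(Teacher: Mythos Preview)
Your argument is correct and follows the same route as the paper: differentiate the multiplicative identity $\hat{\chi}_{\text{CW}}(X\bar{\times}Y)=\hat{\chi}_{\text{CW}}(X)\cdot\hat{\chi}_{\text{CW}}(Y)$ using the Leibniz rule, then evaluate at $t=1$. The paper's own proof in fact stops at the polynomial identity for $\hat{\mathcal{W}}_{\text{CW}}$ and pushes the evaluation step to a subsequent remark, so your write-up is if anything slightly more complete; your caution about the reduced-versus-unreduced convention for $\chi$ is also well placed, since the paper uses $\chi$ here to mean $\mathrm{ev}_{t=1}\hat{\chi}_{\text{CW}}$, i.e.\ the reduced Euler characteristic.
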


\begin{rmk}
    The above result implies that   $\mathcal{W}(X \bar{\times} Y) =\mathcal{W}(X)\cdot \chi(Y) + \mathcal{W}(Y) \cdot \chi(X)$.
\end{rmk}
\begin{cor}\label{corsmash}

 We have the following identities:
    \begin{align}
        \hat{\chi}_{\text{CW}}(X \bar{\Smash} Y)= &\hat{\chi}_{\text{CW}}(X) \cdot \hat{\chi}_{\text{CW}}(Y)\\
      \hat{\mathcal{W}}_{\text{CW}}(X\bar{\Smash}Y)  =& \hat{\chi}_{\text{CW}}(X) \cdot \hat{\mathcal{W}}_{\text{CW}}(Y)+ \hat{\chi}_{\text{CW}}(Y)\cdot \hat{\mathcal{W}}_{\text{CW}}(X)
    \end{align}
    
\begin{proof}
    By definition $X\bar{\Smash}Y(r)= (X\bar{\times} Y)(r)/X \vee Y$, thus we have $\text{Cells}_*(X \Smash Y)= \text{Cells}_*(X \times Y) \setminus \text{Cells}(X \vee Y)$. But note that all cells in $X \vee Y$ are eternal, and so do not contribute to the count. Hence we obtain the same count as for $\bar{\times}$. 

\end{proof}
    
\end{cor}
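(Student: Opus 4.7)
The plan is to reduce both identities to the analogous identity for $\bar{\times}$ proved in the preceding proposition, exploiting the definition of $\bar{\Smash}$ as a quotient of $\bar{\times}$ by $\vee$.

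First I would note that by definition $(X \bar{\Smash} Y)(r) = (X \bar{\times} Y)(r)/(X \vee Y)$, so the cells of $X \bar{\Smash} Y$ are precisely the cells of $X \bar{\times} Y$ other than those lying in the wedge $X \vee Y$. Next I would check that every cell of $X \vee Y$ is eternal when regarded inside $X \bar{\times} Y$: such a cell has the form $a \times \ast$ or $\ast \times b$, and by the product-of-weights formula for $\bar{\times}$ recorded earlier its weight is $w(a) + w(\ast) = -\infty$ (or symmetrically). By convention these contribute $t^{-\infty} = 0$ to the Euler polynomial, so removing them leaves the sum unchanged. Hence
\begin{equation*}
\hat{\chi}_{\text{CW}}(X \bar{\Smash} Y) = \hat{\chi}_{\text{CW}}(X \bar{\times} Y),
\end{equation*}
and the previously established identity $\hat{\chi}_{\text{CW}}(X \bar{\times} Y) = \hat{\chi}_{\text{CW}}(X)\cdot \hat{\chi}_{\text{CW}}(Y)$ gives the first claim.

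For the second identity I would differentiate the first with respect to $t$ and apply the Leibniz rule. Since $\hat{\mathcal{W}}_{\text{CW}} = \tfrac{d}{dt}\hat{\chi}_{\text{CW}}$ by definition, this immediately yields
\begin{equation*}
\hat{\mathcal{W}}_{\text{CW}}(X \bar{\Smash} Y) = \hat{\chi}_{\text{CW}}(X)\cdot \hat{\mathcal{W}}_{\text{CW}}(Y) + \hat{\chi}_{\text{CW}}(Y)\cdot \hat{\mathcal{W}}_{\text{CW}}(X),
\end{equation*}
as asserted. I do not anticipate any substantive obstacle; the only point deserving care is the convention that eternal cells contribute $t^{-\infty}=0$, which is exactly what makes collapsing the eternal subcomplex $X \vee Y \subset X \bar{\times} Y$ invisible at the level of $\hat{\chi}_{\text{CW}}$.
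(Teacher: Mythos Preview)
Your proposal is correct and follows essentially the same approach as the paper: you reduce to the $\bar{\times}$ case by observing that the cells of $X\vee Y$ inside $X\bar{\times}Y$ are eternal (weight $-\infty$) and hence contribute nothing, then invoke the preceding proposition and differentiate. The paper's proof is terser but identical in content; your explicit verification that $w(a\times\ast)=-\infty$ and your use of the Leibniz rule for the second identity just spell out what the paper leaves implicit.
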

\begin{rmk}

Finally we remark that we have equalities:
\begin{align}
     \chi(X \bar{\Smash} Y)= &\chi(X) \cdot \chi(Y)\\
      \mathcal{W}(X\bar{\Smash}Y)  =& \chi(X) \cdot \mathcal{W}(Y)+ \chi(Y)\cdot \mathcal{W}(X).
\end{align}
    
\end{rmk}
\subsection{A filtered loop-suspension adjunction}
Given two filtered spaces $X$ and $Y$, we can form a new filtered space $\text{Map}(X,Y)$ given by letting 
\begin{equation}\text{Map}(X,Y)( r):=\{f:X \to Y | \lceil f \rceil =r\}\end{equation}
be the subspace $\text{Map}(X,Y)\in \text{Top}_*$, of maps with shift $r$. Notice this is indeed a filtered topological space, as any morphism of shift $r$ can be viewed as a morphism of shift $s>r$ via inclusion. In particular, we have inclusions

\begin{equation}
    i_{r,s}^{\text{Map}(X,Y)}:\text{Map}(X,Y)(r) \to \text{Map}(X,Y)(s)
\end{equation}
given by pushing $f$ forward under the filtration maps of $Y$. Note this defines a filtered topological space in the sense of definition \ref{filttopdef}.
\begin{defn}
We define the \textbf{loop space} $\Omega X$ of a filtered space $X$ to be 
\begin{equation}\Omega(X):=\text{Map}(S^1_0,X)\end{equation}
where $S^1_0$ is $S^1$ with the zero filtration.
\end{defn}

\begin{rmk}
A map of shift $r$, from $S^1_0$ with the zero filtration, into any filtered space is uniquely determined by $f(0):S^1(0) \to X(r)$ as the following must commute
\begin{equation*}
    \begin{tikzcd}
    & X(s)\\
        S^1 \ar[ru,"f(s-r)"]\ar[r,swap,"f(0)"]& X(r)\ar[u,"i_{r,s}"]\\
    \end{tikzcd}
\end{equation*}
hence 
\begin{equation}\Omega(X)( r)=\text{Map}(S^1, X(r))= \Omega(X(r)).\end{equation}
\end{rmk}
\begin{prop}
There is an adjunction 
\begin{equation}\text{Map}(\S X,Y)(r) \simeq \text{Map}(X,\Omega Y)( r)\end{equation}

\begin{proof}
An element of $\text{Map}(\S X,Y)( r)$ is a family of maps
\begin{equation*}f(t):(\S X)( t)\to Y( t+r)\end{equation*}
which by definition of filtered suspension is a family of maps
\begin{equation*}
f(t): \S (X( t)) \to Y( t+r)\end{equation*}
by the usual loop-suspension adjunction we have that this is equivalent to a collection of maps
\begin{equation*}
\bar{f}(t): X(t) \to \Omega(Y( t+r))\end{equation*}
and so a collection of maps
\begin{equation*}\bar{f}( t): X( t) \to (\Omega Y)(t+r).\end{equation*}
Furthermore, one finds 

\begin{equation*}
    \begin{tikzcd}
        X(s)\ar[r,"\bar{f}(s)"] & (\Omega Y)(s+r) \\
        X(t)\ar[r,"\bar{f}(t)"]\ar[u,"i_{t,s}^X"] & (\Omega Y)(r+t)\ar[u,swap,"i_{t+r,s+r}^{\Omega Y}"]
    \end{tikzcd}
\end{equation*}
commutes for all $s\geq t$. Indeed, recall that the usual loop-suspension adjunction $\text{Hom}(\S X,Y) \simeq \text{Hom}(X, \Omega Y)$ is given by $\phi: (f:\S X \to Y) \mapsto (\phi(f): X \to \text{Map}(S^1,Y))$ with $(\phi(f)(x))(\tau):= f(x,\tau)$, where we identify $\S X$ with $S^1_0 \bar{\Smash} X$. We therefore find
\begin{align*}
   ( i_{t+r,s+r}^{\Omega Y} \circ \bar{f}(t))(x)(\tau)=&(i_{t+r,s+r}^{\Omega Y} \circ \phi(f(t)))(x)(\tau)\\
   =&i^Y_{t+r,s+r}\circ \phi(f(t))(x)(\tau)\\
   =& i^Y_{t+r,s+r} \circ (f(t))(x,\tau)\\
   =&(f(s))(x,\tau) \circ i^X_{t,s}\\
   =& \phi(f(s))(x)(\tau) \circ i^X_{t,s}\\
   =& \bar{f}(s) \circ i_{t,s}^X.
\end{align*}
Hence $\bar{f}$ defines a filtered morphism, and so we have said adjunction.
\end{proof}
\end{prop}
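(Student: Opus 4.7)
My plan is to prove the filtered adjunction by reducing it levelwise to the classical (unfiltered) loop–suspension adjunction, then verifying that the bijection is compatible with the filtration structure maps on both sides. First I would unpack the two sides. An element of $\text{Map}(\S X,Y)(r)$ is an $\R$-indexed family of pointed maps $f(t):(\S X)(t)\to Y(t+r)$ commuting with the filtration inclusions $i^{\S X}_{t,s}$ and $i^Y_{t+r,s+r}$. Since $(\S X)(t)=\S(X(t))$ by definition of filtered suspension, each component is a map $\S(X(t))\to Y(t+r)$. Analogously, an element of $\text{Map}(X,\Omega Y)(r)$ is a compatible family $\bar f(t):X(t)\to (\Omega Y)(t+r)=\Omega(Y(t+r))$, using the earlier identification $(\Omega Y)(t)=\Omega(Y(t))$.

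Next I would apply the classical pointed loop–suspension adjunction $\phi:\mathrm{Hom}(\S A,B)\xrightarrow{\sim}\mathrm{Hom}(A,\Omega B)$ componentwise, setting $\bar f(t):=\phi(f(t))$. This immediately gives a bijection at each level, so the only real content is showing that $\{\bar f(t)\}_{t\in\R}$ commutes with the filtration maps if and only if $\{f(t)\}_{t\in\R}$ does. For this I would identify $\S X$ with $S^1_0\bar{\Smash}X$ via the proposition shown earlier, so that $\phi$ is the concrete rule $(\phi(f)(x))(\tau)=f(x,\tau)$. Then for $t\leq s$ the required square for $\bar f$ evaluated on $x\in X(t)$ and $\tau\in S^1$ unwinds to
\begin{equation*}
(i^{\Omega Y}_{t+r,s+r}\circ \bar f(t))(x)(\tau)=i^Y_{t+r,s+r}\circ f(t)(x,\tau)=f(s)(i^X_{t,s}(x),\tau)=(\bar f(s)\circ i^X_{t,s})(x)(\tau),
\end{equation*}
where the middle equality is exactly the filtration compatibility of $f$. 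Reading the chain backwards gives the converse, so $\phi$ restricts to a bijection between the two sets.

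Finally, to upgrade bijection to an \emph{adjunction} in the filtered/persistence sense, I would note that the construction $f\mapsto \bar f$ is natural in both $X$ and $Y$ (because classical $\phi$ is), and that the shift $r$ is preserved on the nose: a map of shift $r$ on the $\S X$ side produces, at level $t$, a map landing in $Y(t+r)$, whose adjoint lands in $\Omega(Y(t+r))=(\Omega Y)(t+r)$, i.e.\ again has shift $r$. In particular the bijection respects the filtration inclusions $i_{r,r'}$ on $\text{Map}(X,Y)$ induced by post-composition, so it assembles into an isomorphism of persistence modules.

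The main obstacle I anticipate is the filtration-compatibility check above: one must be careful to use the explicit identification $\S X\simeq S^1_0\bar{\Smash}X$ (rather than the naive $\Smash$) so that the classical formula for $\phi$ plugs in cleanly and the naturality of $\phi$ in $B$ with respect to the inclusions $i^Y_{t+r,s+r}$ can be invoked at each level. Once this identification is in place, everything else is a formal diagram chase.
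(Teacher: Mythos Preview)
Your proposal is correct and follows essentially the same approach as the paper: both reduce to the classical loop--suspension adjunction levelwise via the explicit formula $(\phi(f)(x))(\tau)=f(x,\tau)$ under the identification $\S X\simeq S^1_0\bar{\Smash}X$, and then verify the filtration-compatibility square by exactly the chain of equalities you wrote. Your version is in fact slightly more thorough, as you also note the converse direction and the compatibility with the persistence structure maps $i_{r,r'}$, which the paper leaves implicit.
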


\section{Filtered stable homotopy}

In general homotopy classes of maps $[X,Y]$ between two topological spaces need not form a group. Though, if $Y$ is an H-group or if $X$ is a co-H-group then there exists a natural group structure. In particular, if $X\simeq \S X'$ is a suspension space or if $Y \simeq \Omega Y'$ is a loop space then $[X,Y]$ carries a natural group structure, as supsensions are co-H-groups and loop spaces are H-groups. In the setting of filtered spaces, it follows that $[X,Y]$ carries a persistence group structure (persistence module with each level forming a group rather than module) if $X\simeq \S X'$ is a filtered suspension space or if $Y \simeq \Omega Y'$ is a filtered loop space. Moreover, $[X,Y]$ can be seen to be a filtered Abelian group, i.e., persistence $\Z$-module  if $X\simeq \S^2 X'$ or $Y\simeq \Omega^2 Y'$. Thus if we want to study filtered spaces via the techniques of persistence categories, then we should pass to stable homotopy. 

There is a morphism of filtered CW complexes (and general filtered spaces) we call the \textbf{filtered pinch map}:

\begin{equation}
    \S X \to \S X \vee \S X
\end{equation}
simply given levelwise by the usual pinch map:

\begin{equation*}
    \S X \simeq \text{Cone}\big( X \xrightarrow{i_1} \text{Cone}(X\xrightarrow{i_0} \text{Cyl}(X))) \xrightarrow{\text{pinch}} \S X \vee \S X
\end{equation*}
with $i_{(-)}$ being the inclusion of $X$ into $\text{Cyl}(X)$ along $\{(-)\}\times X$, and $\text{pinch}$ being the projection to the quotient of $\text{Cyl}(X)$ by identifying $\{\frac{1}{2}\}\times X$ with the basepoint. In the filtered setting this map can be seen to have zero shift.

Given two filtered morphisms of equal shift $f,f': \S X \to Y$ we can form their \textbf{filtered concatenation}: 
\begin{equation}
    f+ f': \S X \to Y
\end{equation}
with shift $\lceil f+f'\rceil=\lceil f\rceil = \lceil f' \rceil$. It is given by the composition:
\begin{equation}
    f+f' := \S X \xrightarrow{\text{pinch}}\S X \vee \S X \xrightarrow{f\vee f'} Y.
\end{equation}
i.e., the levelwise concatenation. It follows from the levelwise co-H-group structure on $\S X$ that $[\S X,Y](r):=\text{Hom}_{\ff \text{Top}_*}(\S X,Y)(r)/\sim$, the $r$-th level of the morphism of filtered spaces up to filtered homotopy, is a group. And moreover, since $\S^2 X$ has an Abelian co-H-group structure, $[\S^2 X,Y](r)$ is an Abelian group. Notice that there exist group morphisms 

\begin{equation*}
    (i_{r,s})_*:[\S ^2X ,Y](r) \to [\S^2 X,Y](s)
\end{equation*}
induced by the filtered morphism maps. Thus we can view $[\S^2X,Y]$ as a persistence module (in $\text{Mod}_\Z$). In this set up we naturally obtain a \text{filtered Freudenthal suspension theorem}: For large enough $n\in \Z$ we have an isomorphism of (Abelian) groups:

\begin{equation}
    \S: [\S^n X ,\S^n Y](r) \to [\S^{n+1}X, \S^{n+1}Y](r) .
\end{equation}

If we stabilise our filtered topological category via $\S^\infty$ we will therefore obtain a persistence category. In the remainder of this section we consider a persistence version of the Spanier-Whitehead category of CW-complexes, given by stabilising the category of filtered CW complexes. We show that it is a triangulated persistence category, and explore what the fragmentation metrics defined on TPCs are calculating.
\subsection{A persistence Spanier-Whitehead category}

Given two filtered CW complexes $X$ and $Y$ we can define Abelian groups, 

\begin{equation}
    \pp\text{SW}(X,Y)(r):=\text{Colim}_{n\geq 0}[\S^n X , \S^n Y](r).
\end{equation}

for every $r\in \R$. Given $r<s$ we obtain induced group morphisms 

\begin{equation}
    (i_{r,s}^{SW})*:\pp\text{SW}(X,Y)(r) \to \pp\text{SW}(X,Y)(s)
\end{equation}

More generally, we define:

\begin{equation}
    \pp\text{SW}((X,k)(Y,l))(r):=\text{Colim}_{n>k,l}[\S^{n+k}X,\S^{n+l}Y](r)
\end{equation}
and obtain a persistence module in Abelian groups $\{\pp\text{SW}((X,k),(Y,l)),i_{r,s}^{\pp\text{SW}}\}$.

We define the \textbf{persistence Spanier-Whitehead category} $\pp \text{SW}$, to be the category with objects pairs $(X,k)$ with $X$ a filtered CW complex and $k\in \Z$. The hom-sets are persistence modules given by $\pp\text{SW}((X,k),(Y,l))$. This category comes with natural shift functors for all $a\in \R$, $\ss^a:\pp \text{SW} \to \pp \text{SW}$ given by

\begin{equation}
    \ss^a(X,k)=(\ss^a X,k)
\end{equation}
recalling that $(\ss^a X)(r)=X(r-a)$. There are also functors $[n]:\pp \text{SW} \to \pp \text{SW}$ given by
\begin{equation}
    [n](X,k):= (X,k+n).
\end{equation}

There is a canonical inclusion: $\iota:\text{Ho}\ff \text{Top}_*^{CW} \to \pp \text{SW}$, given by

\begin{equation*}
    \iota(X) = (X,0)
\end{equation*}
and with 
\begin{equation*}
    \iota(f:X \to Y):=\text{Colim}_n(\S^n f):(X,0) \to (Y,0).
\end{equation*}
Note that this restricts to an inclusion $\iota_0:(\text{Ho}\ff \text{Top}_*^{CW})_0 \to (\pp \text{SW})_0$
\begin{theorem}
The category $\pp \text{SW}$ is a triangulated persistence category with shift functors $\ss$ and triangulated functor $[1]$.
\end{theorem}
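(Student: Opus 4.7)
The plan is to verify the three requirements for a TPC from \cite{BCZ1}: first, that $(\pp\text{SW})_0$ is an honest triangulated category with translation $T = [1]$; second, that $T$ commutes with every shift functor $\ss^a$; and third, that each weighted morphism $\eta_r^A$ can be completed to an exact triangle whose third vertex is $r$-acyclic.

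For the first point, I would argue as in the classical Spanier--Whitehead construction: morphisms in $(\pp\text{SW})_0$ are colimits of filtered homotopy classes of shift-zero maps between iterated filtered suspensions, which by the filtered Freudenthal discussion above carry an abelian group structure, and $T = [1]$ becomes an autoequivalence by definition of the colimit, since we can always desuspend by incrementing the integer index. Distinguished triangles are those isomorphic in $(\pp\text{SW})_0$ to a stabilized filtered Puppe sequence
\begin{equation*}
X \xrightarrow{f} Y \to C(f) \to \S X
\end{equation*}
attached to a shift-zero cellular map $f: X \to Y$ between filtered CW complexes, where $C(f)$ is the levelwise mapping cone. The axioms TR1--TR4 reduce levelwise to the classical pointed CW case and pass to the colimit, using that the structure inclusions $i^X_{s,t}$ are cellular so that cones and octahedra constructed levelwise are compatible with the filtration.

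The commutation of $T$ with $\ss^a$ is immediate from the earlier proposition that $\bar{\Smash}$ commutes with shifting: since $[1]$ is (stably) given by smashing with $S^1$ carrying the trivial filtration, we get $\ss^a \circ T = T \circ \ss^a$ on objects, and this equality extends to morphisms by functoriality of the constructions.

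The heart of the argument, and the main obstacle, is the third point. Given $A = (X,k)$ and $r \geq 0$, unpacking the definitions shows that $\eta_r^A$ is represented at filtration level $t$ by the cellular inclusion $X(t) \hookrightarrow X(t+r)$, i.e., by the filtered inclusion $X \to \ss^{-r}X$. Its mapping cone is a filtered CW complex $K$ with $K(t) \simeq X(t+r)/X(t)$, and I take $(K,k)$ as the third vertex of the required triangle. To prove $K \simeq_r 0$, one must show $\eta_r^{(K,k)}$ vanishes in $\pp\text{SW}$. At level $t$ this morphism is the natural map $X(t+r)/X(t) \to X(t+2r)/X(t+r)$, and it fits as the composition of the first two maps of the cofiber sequence
\begin{equation*}
X(t+r)/X(t) \to X(t+2r)/X(t) \to X(t+2r)/X(t+r),
\end{equation*}
hence is null-homotopic. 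Equivalently, one applies the octahedral axiom to $X \xrightarrow{\eta_r} \ss^{-r}X \xrightarrow{\eta_r} \ss^{-2r}X$ to obtain an exact triangle $K \to C(\eta_{2r}^X) \to \ss^{-r}K \to \S K$ in $(\pp\text{SW})_0$, which identifies the composition $K \to \ss^{-r}K$ with $\eta_r^{(K,k)}$ and thus shows it vanishes. The delicate point to verify carefully is that the null-homotopies can be chosen naturally in $t$, so that $\eta_r^{(K,k)}$ vanishes as a stable persistence morphism rather than merely at each filtration level; this is where the restriction to filtered CW complexes with cellular structure maps is essential, since the Puppe sequences and their null-homotopies can be constructed cellularly and compatibly across the filtration.
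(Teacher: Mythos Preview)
Your proposal is correct and follows essentially the same two-part strategy as the paper: first establish that $(\pp\text{SW})_0$ is triangulated with $[1]$ commuting with $\ss^a$, then show the cone of $\eta_r^{(X,k)}$ is $r$-acyclic by identifying $\eta_r^K$ at level $t$ with the composite of two consecutive maps in the cofiber sequence $X(t+r)/X(t) \to X(t+2r)/X(t) \to X(t+2r)/X(t+r)$, hence nullhomotopic.

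The only methodological difference is in Part~1: the paper does not verify TR1--TR4 by hand but instead invokes the general result of \cite{De} that the Spanier--Whitehead category of any pointed model category $(\mathcal{H},\S)$ is triangulated, observing that $(\pp\text{SW})_0$ is exactly this construction applied to $((\ff\text{Top}_*)_0,\S)$. Your direct levelwise verification is more self-contained but requires more care in checking naturality of the fill-in maps across filtration levels (as you yourself flag); the paper's appeal to \cite{De} sidesteps this bookkeeping at the cost of a black-box reference. For Part~2 the arguments are identical in substance---the paper simply asserts the relevant map is ``naturally nullhomotopic'' where you spell out the cofiber-sequence factorization and offer the octahedral reformulation.
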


\begin{proof}
   The usual Spanier-Whitehead category is triangulated, with exact trianlges being triangles

   \begin{equation}
       \begin{tikzcd}
           (X,k)\ar[r,"f"] & (Y,l)\ar[r,"g"] & (Z,m)\ar[r,"h"] & [1](X,k)
       \end{tikzcd}
   \end{equation}
   which, up to some even suspension, are isomorphic to the image of a cofiber sequence under $\iota$, i.e., for some $n\in 2\Z$ there is an isomorphism of triangles
    \begin{equation}
       \begin{tikzcd}
           {[n]}(X,k)\ar[r,"{[n]f}"]\ar[d,"\cong"] & {[n]}(Y,l)\ar[r,"{[n]g}"]\ar[d,"\cong"]  & {[n]}(Z,m)\ar[r,"{[n]h}"] \ar[d,"\cong"] & {[n+1]}(X,k)\ar[d,"\cong"] \\
           (X',0)\ar[r,"\iota_0(f')"] & (Y',0)\ar[r,"\iota_0(i)"] & (\text{Cone}(f'),0)\ar[r,"\iota_0(j)"] & {[1]}(X',0)
       \end{tikzcd}
   \end{equation}

   We prove the statement in two parts:
   \begin{enumerate}
       \item[Part 1:] $(\pp \text{SW})_0$ is triangulated.  

    This will follow directly from the usual Spanier-Whitehead category being triangulated. In fact there is a general construction for `the Spanier-Whitehead category of $(\mathcal{H},\S)$' for a pointed model category $\mathcal{H}$ with suspension $\S$ (see \cite{De}) which is always triangulated. The category $(\text{Top}_*^{\pp SW})_0$ can be seen to be precisely the Spanier-Whitehead category of $((\ff\text{Top}_*)_0,\S)$. Indeed, by choice of filtration on mapping cones and suspensions being taken levelwise, we see that requirements to define the Spanier-Whitehead category of $((\ff \text{Top}_*)_0,\S)$ are satisfied, and hence it is a well defined triangulated category. Note furthermore $[1]$ commutes with $\ss^a$ for all $a\in \R$ by simply realising $\ss^a[1](X,k)=(\ss^a X,k+1)$

    \item[Part 2:] $\eta_r^{(X,k)}:(X,k) \to \ss^{-r}(X,k)$ has $r$-acyclic cone.
    By definition 
    \begin{equation*}
    [(X,k),\ss^{-r}(X,k)](0):= \text{Colim}[\S^{n+k}X,\S^{n+k}\ss^{-r}X](0)
    \end{equation*}
    We realise that the map $\eta_r^{(X,k)}$ is induced by $\eta_r^{\S^kX}:\S^kX \to \ss^{-r} \S^kX $ in $\ff \text{Top}_*$. Which levelwise is given by the natural inclusions
    \begin{equation*}
        \eta_{r}^{\S^k X}(t):\S^k X(t) \to \S^k \ss^{-r}X(t)=\S^k X(t+r)
    \end{equation*}
    The cone of this map in $\ff\text{Top}_*$ is, up to (zero) homotopy equivalence, given by 
    \begin{equation*}
        \text{Cone}(\eta_r^{\S^k X})(t)= \S^{k}X(t+r)/\S^{k}X(t).
    \end{equation*}
We find that the maps 
\begin{equation*}
    \eta_r^{\text{Cone}(\eta_r^{\S^k X})}(s):\text{Cone}(\eta_r^{\S^k X})(s) \to \ss^{-r}\text{Cone}(\eta_r^{\S^k X})(s)=\text{Cone}(\eta_r^{\S^k X})(s+r)
\end{equation*}
are simply given by the inclusions

\begin{equation*}
    \S^{k}X(s+r)/\S^{k}X(s) \to \S^{k}X(s+2r)/\S^{k}X(s+r)
\end{equation*}
which one sees are naturally nullhomotopic. In particular, after passing back to stable homotopy one finds that 
\begin{equation*}
    \eta_r^{\text{Cone}(\eta_{r}^{(X,k)})}=0.
\end{equation*}
i.e., $\text{Cone}(\eta_r^{(X,k)})$ is $r$-acyclic.
\end{enumerate}

\end{proof}

\begin{lemma}
    There is an equivalence of triangulated categories $\Phi: \text{SW} \to \pp \text{SW}_\infty$, with $ \text{SW}$ being the usual Spanier-Whitehead category of (finite) CW-complexes.
    \begin{proof}
        In the limit category $\pp \text{SW}_\infty$ any two filtered CW complexes with total space become isomorphic. Indeed, by assumption $-\infty<\lfloor X \rfloor \leq \lceil X \rceil < +\infty$. Assume $X$ and $X'$ are filtered spaces with $X(\infty)=X'(\infty)$, then in $\pp SW$ we can find an isomorphism $f: (X,n) \to (X,n)$ given by $[n] \iota(f': X \to  X')$, with $f'\in \text{Hom}_{\ff \text{Top}_*}(X,X')(\lceil X'\rceil - \lfloor X \rfloor)$ being induced by the equality $X(\lceil X'\rceil)= X'(\lceil X' \rceil) $. $[n]f$ has shift $\lceil X'\rceil - \lfloor X \rfloor$ but in the limit $\pp\text{SW}_\infty$ gives an isomorphism $(X,n) \cong (X',n)$. We can therefore define a functor $\Phi: \text{SW} \to \pp \text{SW}_\infty$, given by
        \begin{equation*}
            \Phi((X,n))(r)=\begin{cases}
            (X,n) & r\geq 0\\
            (*,n) & r<0
            \end{cases}
        \end{equation*}
        Clearly this is essentially surjective. If we denote $(X,n)$ to be such that $(X,n)(r)=(X,n)$ for $r\geq 0$ and trivial for $r<0$, and similarly for $(X',n')$, then in $\pp\text{SW}$ we have $\text{Hom}_{\pp \text{SW}}((X,n),(X',n'))(0)=\text{Hom}_{\pp \text{SW}}((X,n),(X',n'))(r)=\text{Hom}_{\text{SW}}((X,n),(X',n'))$ for all $r\geq 0$. Thus we have full faithfullness.
    \end{proof}
    
\end{lemma}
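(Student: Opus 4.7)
The plan is to define $\Phi$ explicitly on a zero-filtered lift of each finite CW complex and then to verify essential surjectivity, full faithfulness, and preservation of exact triangles, exploiting the TPC structure on $\pp\text{SW}$ established above. On objects I set $\Phi(X,n) := (X_0, n)$, where $X_0$ is $X$ equipped with the zero filtration $X_0(r)=X$ for $r\geq 0$ and $X_0(r)=\ast$ for $r<0$. A stable class $[\Sigma^k f] \in \text{SW}((X,n),(Y,m))$ lifts to a shift-zero filtered stable map $\Sigma^{k+n}X_0 \to \Sigma^{k+m}Y_0$, and I send it to its image in $\pp\text{SW}_\infty$ via the canonical quotient $\pp\text{SW}_0 \to \pp\text{SW}_\infty$.

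For essential surjectivity, given any filtered CW complex $Y$ with finite $\lfloor Y\rfloor$ and $\lceil Y\rceil$, I would produce an isomorphism $(Y,n)\cong (Y(\infty)_0,n)$ in $\pp\text{SW}_\infty$. The identity on total spaces induces a filtered map between $Y$ and a suitable shift of $Y(\infty)_0$ of some finite shift bounded by $\lceil Y\rceil - \lfloor Y\rfloor$; its filtered mapping cone has cells confined to a bounded range of sizes and is therefore weighted-acyclic of some weight $r$. Since $\pp\text{SW}_\infty$ is the Verdier localisation of $\pp\text{SW}_0$ at weighted-acyclics by the proposition of \cite{BCZ1} recalled earlier, this comparison map descends to an isomorphism in the limit category.

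For full faithfulness, I would compute directly that at each level $r\geq 0$ the persistence hom-module satisfies $\text{Hom}_{\pp\text{SW}}((X_0,n),(Y_0,m))(r) = \text{SW}((X,n),(Y,m))$: a shift-$r$ map between zero-filtered spaces is determined by a single pointed map of total spaces, since source and target are trivial below $0$ and constant above; filtered homotopies are similarly forced, and stabilising over $k$ yields the classical stable hom-set. The persistence structure maps $i_{r,s}$ act as identities for $r,s\geq 0$, so passage to $\pp\text{SW}_\infty$ does not alter this group. Preservation of exact triangles then follows because filtered mapping cones of shift-zero maps between zero-filtered objects are again zero-filtered, so $\Phi$ sends cofiber sequences in $\text{SW}$ to exact triangles in $(\pp\text{SW})_0$ and hence in $\pp\text{SW}_\infty$; compatibility of $\Phi$ with the triangulation functor $[1]$ is immediate from the definitions.

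The main obstacle I anticipate is the full faithfulness step --- specifically, showing injectivity of $\Phi$ on hom-sets at the localisation level. Beyond the hom-set computation above, one must rule out the possibility that two distinct stable classes become identified in $\pp\text{SW}_\infty$ through factorisations through weighted-acyclic objects. This should reduce to the observation that a shift-zero map from a zero-filtered source into a zero-filtered target cannot factor nontrivially through any object whose shift map $\eta_r$ vanishes, since the constant filtration of source and target leaves no ``room'' for such a factorisation unless the map was already null-homotopic stably; but formalising this rigorously through the localisation description of $\pp\text{SW}_\infty$ is the delicate technical point.
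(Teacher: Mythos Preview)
Your approach is essentially the same as the paper's: define $\Phi$ via the zero filtration, obtain essential surjectivity from the fact that filtered objects with the same total space become isomorphic after forgetting shift size, and obtain full faithfulness by directly computing the persistence hom-module between zero-filtered objects.

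The ``main obstacle'' you flag is not a genuine obstacle, and the paper does not encounter it. Recall that $\cc_\infty$ is \emph{defined} so that $\text{Hom}_{\cc_\infty}(A,B)$ is the directed colimit $\varinjlim_r \text{Hom}_{\cc}(A,B)(r)$ under the structure maps $i_{r,s}$; the Verdier-localisation description is a \emph{consequence}, not the definition. You have already computed that for zero-filtered $(X_0,n)$ and $(Y_0,m)$ the persistence module $\text{Hom}_{\pp\text{SW}}((X_0,n),(Y_0,m))(r)$ is constant and equal to $\text{SW}((X,n),(Y,m))$ for all $r\geq 0$, with identity structure maps. The colimit of an eventually-constant diagram with identity transition maps is just that constant value, so
\[
\text{Hom}_{\pp\text{SW}_\infty}((X_0,n),(Y_0,m)) \;=\; \text{SW}((X,n),(Y,m))
\]
on the nose. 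There is no need to argue about factorisations through weighted-acyclics; by working with the colimit definition rather than the localisation description, injectivity is automatic. This is exactly how the paper proceeds (in one line), and your own hom-set computation already contains the full argument.
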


\subsection{Strict exact triangles}\label{strictexactsub}
Before proceeding to discuss the associated (pseudo) metrics induced by this TPC structure it is worth understanding what form strict exact triangles take in $\pp \text{SW}_0$. Recall a triangle in $\pp \text{SW}_0$ 
 \begin{equation}
       \begin{tikzcd}
           (X,k)\ar[r,"f"] & (Y,l)\ar[r,"g"] & (Z,m)\ar[r,"h"] & [1](X,k)
       \end{tikzcd}
   \end{equation}
is exact if there exists an $n\in 2\Z$ and a zero-isomorphism of triangles

 \begin{equation}\label{isooftri}
       \begin{tikzcd}
           {[n]}(X,k)\ar[r,"{[n]f}"]\ar[d,"\cong"] & {[n]}(Y,l)\ar[r,"{[n]g}"]\ar[d,"\cong"]  & {[n]}(Z,m)\ar[r,"{[n]h}"] \ar[d,"\cong"] & {[n+1]}(X,k)\ar[d,"\cong"] \\
           (X',0)\ar[r,"\iota_0(f')"] & (Y',0)\ar[r,"\iota_0(i)"] & (\text{Cone}(f'),0)\ar[r,"\iota_0(j)"] & {[1]}(X',0)
       \end{tikzcd}
   \end{equation}

where the lower triangle is the image of a cofibre sequence in $(\ff \text{Top}_*)_0$. In $\pp \text{SW}_0$ a triangle 
\begin{equation}\label{triang}
       \begin{tikzcd}
           (X,k)\ar[r,"f"] & (Y,l)\ar[r,"g"] & (Z,m)\ar[r,"h"] & \ss^{-r}[1](X,k)
       \end{tikzcd}
   \end{equation}

is strict exact of weight $r$ if there exists a commutative diagram

\begin{equation}
       \begin{tikzcd}
          & & \ss^r(Z',m')\ar[d,"\psi"]\ar[dr,"\ss^rh"]\\
          (X,k)\ar[r,"f"] & (Y,l)\ar[r,"\bar{g}"]\ar[rd,"g"] & (Z',m')\ar[d,"\phi"]\ar[r,"\bar{h}"] &{[1]}(X,k)\\
           & & (Z,m)\ar[r,"h"] & \ss^{-r}[1](X,k)
       \end{tikzcd}
   \end{equation}

with central line being an exact triangle and $\phi$ an $r$-isomorphism (with $\phi \circ \psi =\eta_r$). Assuming the central triangle is exact via the isomorphism in \ref{isooftri}, then consider

\begin{equation}
    \begin{tikzcd}
    (X',0)\ar[d,"\cong"] \ar[r,"\iota_0(f')"]&\ar[d,"\cong"] (Y',0)\ar[rr,"\iota_0(i)"] && \ar[d,"\cong"](\text{Cone}(f'),0) \ar[r,"\iota_0(j)"]& {[1]}(X',0)\ar[d,"\cong"]\\
        {[n]}(X,k) \ar[r,"{[n]f}"] & {[n]}(Y,l)\ar[drr,"{[n](\phi \circ \bar{g})}"] \ar[rr,"{[n] \bar{g}}"]&& {[n]}(Z',m')\ar[d,"{[n]\phi}"]\ar[r,"{[n]\bar{h}}"] & {[n+1]}(X,k)\\
        & && {[n]}(Z,m) \ar[r,"{[n]h}"]& \ss^{-r}{[n+1]}(X,k)
    \end{tikzcd}
\end{equation}
Thus, the triangle \ref{triang}, being strict exact of weight $r$ implies that there exists an $n\in 2\Z$ and a commutative diagram

\begin{equation}\label{stricttri}
    \begin{tikzcd}
        & && \ss^r[n](Z,m)\ar[d,"\vartheta"]\ar[dr,"\ss^r\varepsilon"] &\\
        (X',0) \ar[r,"\iota_0(f')"]&\ar[drr,swap,"\varphi \circ \iota_0(i)"] (Y',0)\ar[rr,"\iota_0(i)"] && (\text{Cone}(f'),0) \ar[d,"\varphi"]\ar[r,"\iota_0(j)"]& {[1]}(X',0)\\
       {} &{} && {[n]}(Z,m)\ar[r,"\varepsilon"] & \ss^{-r}{[1]}(X',0)
    \end{tikzcd}
\end{equation}
with $\varphi$ being an $r$-isomorphism (and $\varphi \circ \vartheta =\eta_r$). 

\begin{prop}
  If $\alpha:(A,m) \simeq_r (B,l)$ is an $r$-isomorphism, then there exists some $A'$ and $B'\in \ff\text{Top}_*$, and an $n\in 2\Z$ such that there is a $\iota_0(\beta):(A',0) \simeq_r (B',0)$ and the following commutes
    \begin{equation}
        \begin{tikzcd}
            {[n]}(A,m)\ar[r,"\cong"] \ar[d,"{[n]\alpha}"]& (A',0)\ar[d,"\iota_0(\beta)"] \\
            {[n]}(B,l)\ar[r,"\cong"] & (B',0)
        \end{tikzcd}
    \end{equation}
    \begin{proof}
        By definition of $r$-isomorphism there must exist some $\alpha: (A,m) \to (B,l)$ such that there is an exact triangle

        \begin{equation*}
            \begin{tikzcd}
                (A,m) \ar[r,"\alpha"]& (B,l) \ar[r]& (K,k)\ar[r] & {[1]}(A,m)
            \end{tikzcd}
        \end{equation*}
        with $\eta_r^{(K,k)}=0$. Thus there is some $n\in 2\Z$ and an isomorphism of triangles
         \begin{equation*}
            \begin{tikzcd}
               {[n]} (A,m) \ar[d,"\cong"]\ar[r,"{[n]\alpha}"]& {[n]}(B,l) \ar[d,"\cong"]\ar[r]& {[n]}(K,k)\ar[r]\ar[d,"\cong"] & {[n+1]}(A,m)\ar[d,"\cong"]\\
                (A',0)\ar[r,"\iota_0(\beta)"]  & (B',0)\ar[r,"\iota_0(i)"] & (K',0)\ar[r,"\iota_0(j)"] & [1](A',0) 
            \end{tikzcd}
        \end{equation*}
        Thus the leftmost square is the desired commutative square. It is easy to show that $\iota_0(\beta)$ is an $r$-isomorphism. Since $(K,k)$ is $r$-acyclic $[n](K,k)$ must also be $r$-acyclic. Thus $(K',0)$ is zero isomorphic to an $r$-acyclic object. Denoting this isomorphism by $f:[n](K,k) \to (K',0)$ we find 
        \begin{equation*}
            \eta_r^{(K',0)}= f \circ f^{-1} \circ \eta_r^{(K',0)}= f \circ \eta_r^{[n](K,k)} \circ \ss^r f^{-1}=0
        \end{equation*}
        meaning that $(K',0)$ is $r$-acyclic and thus $\iota_0(\beta)$ is an $r$-isomorphism.
    \end{proof}
\end{prop}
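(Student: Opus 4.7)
The plan is to read off the desired commutative square directly from the data packaged in the definition of an exact triangle in $\pp\text{SW}_0$, and then to check that $r$-acyclicity passes through the resulting zero-isomorphism of triangles. By definition of an $r$-isomorphism in a TPC, $\alpha$ fits into an exact triangle
\[
(A,m) \xrightarrow{\alpha} (B,l) \to (K,k) \to [1](A,m)
\]
in $\pp\text{SW}_0$ with $(K,k) \simeq_r 0$. Invoking the characterization of exactness recalled at the start of Section \ref{strictexactsub}, there exist an $n \in 2\Z$, filtered CW complexes $A'$ and $B'$, and a cofiber sequence $A' \xrightarrow{\beta} B' \xrightarrow{i} \text{Cone}(\beta)$ in $(\ff\text{Top}_*)_0$ such that $[n]$ applied to the triangle above is zero-isomorphic to its image under $\iota_0$.

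The leftmost square of this isomorphism of triangles is precisely the square demanded by the statement, with $\iota_0(\beta)$ as its bottom morphism. What remains is to verify that $\iota_0(\beta)$ is itself an $r$-isomorphism, which amounts to showing that its cone $(K',0) := (\text{Cone}(\beta),0)$ is $r$-acyclic. Let $f : [n](K,k) \xrightarrow{\cong} (K',0)$ denote the rightmost zero-isomorphism supplied by the isomorphism of triangles. Since $[1]$ commutes with every shift functor $\ss^a$ by the previous theorem, so does $[n]$, and hence $[n](K,k)$ remains $r$-acyclic. Naturality of $\eta_r$ applied to the zero-shift morphism $f$ then gives
\[
\eta_r^{(K',0)} \circ f \;=\; \ss^{-r}f \circ \eta_r^{[n](K,k)} \;=\; 0,
\]
and invertibility of $f$ forces $\eta_r^{(K',0)} = 0$, as desired.

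The main potential obstacle is not any single step but the bookkeeping: one must ensure that the even suspension $[n]$, which is forced on us by the Spanier--Whitehead definition of exact triangle, commutes with the persistence shifts $\ss^r$ so that $r$-acyclicity is preserved under $[n]$, and one must appeal to the naturality of $\eta_r$ with respect to zero-shift isomorphisms in order to transport acyclicity across $f$. Both are structural properties of the TPC already established earlier in the excerpt, so the argument introduces no genuinely new ingredient beyond carefully chaining them together with the exact triangle completing $\alpha$.
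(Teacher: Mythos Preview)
Your argument is correct and follows essentially the same route as the paper: complete $\alpha$ to an exact triangle with $r$-acyclic cone, invoke the definition of exactness in $\pp\text{SW}_0$ to obtain the even-suspension isomorphism with the image of a cofiber sequence, read off the left square, and then transport $r$-acyclicity of the cone across the zero-isomorphism via naturality of $\eta_r$. If anything, your bookkeeping is slightly cleaner---you make explicit why $[n](K,k)$ stays $r$-acyclic (commutation of $[n]$ with $\ss^a$) and your naturality computation avoids the minor notational slip in the paper's chain of equalities.
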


The preceding result implies that we can replace (after possibly choosing larger $n$) the diagram in \ref{stricttri} with a (zero) isomorphic diagram of the form

\begin{equation}\label{stricttri2}
    \begin{tikzcd}
        & && \ss^r(C,0)\ar[d,"\vartheta"]\ar[dr,"\ss^r\varepsilon"] &\\
        (X',0) \ar[r,"\iota_0(f')"]&\ar[drr,swap,"\varphi \circ \iota_0(i)"] (Y',0)\ar[rr,"\iota_0(i)"] && (\text{Cone}(f'),0) \ar[d,"\varphi"]\ar[r,"\iota_0(j)"]& {[1]}(X',0)\\
       {} &{} && (C,0)\ar[r,"\varepsilon"] & \ss^{-r}{[1]}(X',0)
    \end{tikzcd}
\end{equation}

In particular if a triangle in $\pp \text{SW}_0$ is strict exact of weight $r$, then after suitably many even suspensions it is zero isomorphic to a strict exact triangle of weight $r$ which is the image of a cofiber sequence in $(\ff \text{Top}_*)_0$. The converse can be easily be verified.

\subsection{Fragmentation distances and sizes}
The TPC structure on $\pp \text{SW}$ allows us to construct fragmentation (pseudo) metrics on the class of objects. Furthermore these metrics can be pulled back via the functor $i:\ff \text{Top}_* \to \pp \text{SW}$ to metrics on $\ff \text{Top}_*$. Let $\ff$ be a family of objects in $\ff\text{Top}_*$ and let $X,Y$ also be objects of $\ff \text{Top}_*$, then we define the fragmentation distance on $\ff\text{Top}_*$

\begin{equation}
d_{\ff\text{Top}_*}^\ff(X,Y):=\inf \big\{d^{\bar{\ff}}_{{\pp\text{SW}}}((\bar{X},0),(\bar{Y},0)): \bar{X} \simeq X, \bar{Y} \simeq Y\big\}
\end{equation}
Here, $d_{\pp\text{SW}}^{(-)}$ is the associated fragmentation metric induced by the TPC structure on $\pp \text{SW}$, $\bar{X}$ is a filtered CW approximation of $X$, and $\bar{\ff}:=\{(\bar{F},0):F\in \ff\}$. The choice of family $\ff$ is key to the definition of these metrics, indeed two different choices and induce completely different metrics. Thus we wish to choose a `nice' family which allows for distances that are comparable to previously defined distances. Recall that we define the filtered $k$ sphere at level $0$ by $S^k$, it is given by
\begin{equation*}
    S^k(t):=\begin{cases}
        * & t<0\\
        S^k & t\geq 0
    \end{cases}
\end{equation*}
We define the family $
\mathbb{S}:=\{ S^k: k\in \N\}$ and set $d_\mathbb{S}:=d^{\mathbb{S}}_{\ff \text{Top}_*}$. The idea here is that this family will allow one to generate any CW complex via attaching cones. The `size' of these attaching cones induces the distances we wish to study. These fragmentation metrics also induce a notion of size of a filtered topological space, we define the size of a filtered space $X$ to be:

\begin{equation*}
    |X|_{\mathbb{S}}:=d_{\mathbb{S}}(*,X).
\end{equation*}

\begin{prop}
    The size $|X|_{\mathbb{S}}$ is finite. 
    \begin{proof}
    By assumption, each filtration step $X(r)$ is equivalent to a finite CW-complex. Furthermore, the set $\text{Spec}(X)$ is also finite and the filtration stablises. Thus we can construct $X$ out of finitely many attaching cones. In particular, the image of each of these cone sequences in $\pp \text{SW}$ give exact triangles $\Delta_i$ each with finite weight. Hence we can bound $|X|_{\mathbb{S}}$ by $\sum_i^n w(\Delta_i)<\infty$ .

    \end{proof}
\end{prop}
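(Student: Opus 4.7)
The plan is to explicitly construct an iterated cone decomposition of $X$ from $*$ using the family $\mathbb{S}$, consisting of finitely many exact triangles each of finite triangular weight, and bound the size by summing those weights.

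First I would replace $X$ by a filtered CW approximation $\bar X$ via Lemma \ref{CWspace}. Under the stated assumption that each level $\bar X(r)$ is finite CW and $\mathrm{Spec}(\bar X)$ is finite (with the filtration stabilising above and below), $\bar X$ has only finitely many non-eternal cells $a_1,\dots,a_n$, each with a finite real weight $w(a_i)\in\R$. I would then order the $a_i$ compatibly with the skeletal filtration (breaking ties by weight), so that $\bar X_0 = *$ and $\bar X_i := \bar X_{i-1}\cup a_i$ is well-defined: the attaching map of $a_i$ lands in $\bar X_{i-1}$ and the attaching sphere first appears at filtration level $w(a_i)$. This yields cofibrations
\begin{equation*}
    \ss^{-w(a_i)}S^{|a_i|-1}\longrightarrow \bar X_{i-1}\longrightarrow \bar X_i
\end{equation*}
in $(\ff\text{Top}_*)_0$, whose images under $\iota_0$ are exact triangles $\Delta_i$ in $(\pp\text{SW})_0$.

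Next I would pass to $\pp\text{SW}_\infty$. There the shift morphism $\eta_{w(a_i)}$ becomes an isomorphism, so each $\Delta_i$ is represented by a triangle whose first term is the unshifted object $(S^{|a_i|-1},0)\in\bar{\mathbb S}$. Together with the terminal triangle having $X_j=T^{-1}(*)$ (vacuously), the family $\{\Delta_i\}_{i=1,\dots,n}$ constitutes a valid iterated cone decomposition of $(\bar X,0)$ from $*$ using $\bar{\mathbb S}$. For the reverse direction $\delta^{\bar{\mathbb S}}(*,(\bar X,0))$ I would rotate each $\Delta_i$, which reinterprets the cell attachment as attaching $(S^{|a_i|},0)\in\bar{\mathbb S}$ to collapse $\bar X$ down, giving an equally finite decomposition.

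Finally I would bound $\bar w(\Delta_i)$. Since the underlying cofiber sequence in $(\ff\text{Top}_*)_0$ is honestly of shift zero, the triangle $\Delta_i$ is already strict exact of weight zero as a triangle starting with $\ss^{-w(a_i)}(S^{|a_i|-1},0)$; applying $\ss^{s,0,0,s}$ with $s=w(a_i)$ replaces the first term by the unshifted sphere at the cost of a shift $|w(a_i)|$ absorbed into the outgoing morphism, producing a strict exact representative of weight at most $|w(a_i)|$. Therefore $\bar w(\Delta_i)\le|w(a_i)|$, and summing over the finitely many cells yields $|X|_{\mathbb S}\le\sum_{i=1}^n|w(a_i)|<\infty$. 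The main obstacle is the bookkeeping in the final step: verifying rigorously via the description of strict exact triangles in Section \ref{strictexactsub} that the $\ss^{s,0,0,s}$ operation produces a strict exact representative whose weight is controlled by $|w(a_i)|$, since the conversion between an exact triangle with a shifted vertex and a strict exact triangle with an unshifted vertex is the delicate point.
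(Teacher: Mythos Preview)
Your proposal is correct and follows essentially the same approach as the paper: build the filtered CW approximation cell by cell via finitely many cofiber sequences, pass to exact triangles in $\pp\text{SW}$, and bound $|X|_{\mathbb S}$ by the (finite) sum of their triangular weights. You supply considerably more detail than the paper's terse argument, and your explicit bound $\bar w(\Delta_i)\le |w(a_i)|$ via the $\ss^{s,0,0,s}$ shift is exactly the mechanism the paper spells out later in the proof of Lemma~\ref{weightsumcrit}; just double-check the sign of the shift on the attaching sphere (compare with $S^k_l=\ss^l S^k_0$ in the $K$-theory section).
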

\begin{prop}
    The distance $d_{\mathbb{S}}(X,Y)$ is finite. 
    \begin{proof}
        This will follow from $|X|_\mathbb{S}$ and $|Y|_{\mathbb{S}}$ both being finite. Indeed, we can attach enough cones to $X$ to such that the resulting space is contractible. We then attach cones to to build $Y$. We see that $d_\mathbb{S}(X,Y)\leq |X|_\mathbb{S}+|Y|_\mathbb{S}$.
    \end{proof}
\end{prop}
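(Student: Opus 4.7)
\emph{Proof proposal.} The plan is to establish the bound $d_\mathbb{S}(X,Y) \leq |X|_\mathbb{S} + |Y|_\mathbb{S}$; finiteness then follows from the preceding proposition. Unpacking the symmetrised fragmentation distance,
\[
d_\mathbb{S}(X,Y)=\max\{\delta^{\bar{\mathbb{S}}}((X,0),(Y,0)),\ \delta^{\bar{\mathbb{S}}}((Y,0),(X,0))\},
\]
so it suffices by symmetry to bound $\delta^{\bar{\mathbb{S}}}((Y,0),(X,0))\leq |X|_\mathbb{S}+|Y|_\mathbb{S}$, where $\bar{\mathbb{S}}=\{(S^k,0):k\in\N\}$ is the family in $\pp\text{SW}$ corresponding to $\mathbb{S}$.

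The first step is to prove a triangle-inequality for the asymmetric pseudo-distance: $\delta^{\bar{\mathbb{S}}}(A,C)\leq \delta^{\bar{\mathbb{S}}}(A,B)+\delta^{\bar{\mathbb{S}}}(B,C)$ for any triple $A,B,C\in\pp\text{SW}_\infty$. Applied with $A=(Y,0)$, $B=0$ (the zero object), and $C=(X,0)$, together with the bounds $\delta^{\bar{\mathbb{S}}}((Y,0),0)\leq |Y|_\mathbb{S}$ and $\delta^{\bar{\mathbb{S}}}(0,(X,0))\leq |X|_\mathbb{S}$ (both immediate from the definition $|\cdot|_\mathbb{S}=d_\mathbb{S}(*,\cdot)$ together with the observation that a cone on the zero object is trivial, so the $T^{-1}0$ step costs nothing), this yields the required estimate.

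For the triangle inequality itself, I would splice an iterated cone decomposition $\mathcal{D}_1$ realising $\delta^{\bar{\mathbb{S}}}(A,B)$ with one $\mathcal{D}_2$ realising $\delta^{\bar{\mathbb{S}}}(B,C)$: the single step of $\mathcal{D}_1$ that uses the cone $T^{-1}B$ is replaced by the suspended sequence $T^{-1}\mathcal{D}_2$, which builds $T^{-1}B$ from $T^{-1}C$ through cones that land in $\bar{\mathbb{S}}$ up to the Spanier-Whitehead identifications $(S^k,n)\cong(S^{k+1},n+1)$ available in $\pp\text{SW}_\infty$. Iterated application of the weighted octahedral axiom at each splice point then guarantees that the total weight of the spliced sequence is at most $\delta^{\bar{\mathbb{S}}}(A,B)+\delta^{\bar{\mathbb{S}}}(B,C)$.

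The main obstacle I anticipate is bookkeeping the weights through the splice. Each application of the weighted octahedral axiom introduces auxiliary triangles whose combined weight is bounded by the weights of the two input triangles, and one must iterate this carefully along the entire spliced chain to see that the sum of weights really controls the global weight $\bar{w}$ (computed as an infimum over strict exact representatives in $\pp\text{SW}_0$). A secondary subtlety is the corner case $T^{-1}(S^0,0)=(S^0,-1)$, which formally corresponds to a non-existent $(S^{-1},0)$; this can be sidestepped by first applying a large even-suspension shift to $\mathcal{D}_2$ before splicing, so that every $T^{-1}$-shifted cone remains strictly inside $\bar{\mathbb{S}}$, with the extra suspensions cancelling out on the outer triangle by the $[n]/\ss^a$ commutation established in the TPC theorem.
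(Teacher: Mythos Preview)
Your proposal is correct and follows essentially the same route as the paper: both arguments pass through the zero object and bound $d_\mathbb{S}(X,Y)$ by $|X|_\mathbb{S}+|Y|_\mathbb{S}$. The paper simply concatenates the two cone decompositions directly (kill $X$, then build $Y$), whereas you phrase this as the triangle inequality for $\delta^{\bar{\mathbb{S}}}$ through $0$; note that this triangle inequality is already part of the \cite{BCZ1} machinery the paper invokes, so your re-derivation via the weighted octahedral axiom and the $T^{-1}(S^0,0)$ bookkeeping is unnecessary here.
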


\begin{lemma}\label{weightsumcrit}
    Let $X$ be a compact connected smooth manifold, and $f:X \to \R$ be non-negative, bounded, Morse. Let $X_f$ be the filtered space corresponding to the sub-level sets of $f$, then \begin{equation}|X_f|_\mathbb{S}\leq \sum_{x\in\text{Crit}(f)} f(x).\end{equation}
    \begin{proof}
        The homotopy type of $X$ will only change as we pass over critical points of $f$, it is a well known result of Morse theory that, if $x\in \text{Crit}(f)$ is such that the submanifold $f^{-1}[f(x)-\epsilon,f(x)+\epsilon]$ contains only $x$ as a critical point, then $f^{-1}\{\leq f(x)+\epsilon\}$ is homotopy equivalent to $f^{-1}\{\leq f(x)-\epsilon\}$ with a cell attached. The dimension of this cell depends on the index of $x$ as a critical point. More generally, if there are many (isolated) critical points all with the same critical value, then $f^{-1}\{\leq f(x)+\epsilon\}$ is homotopy equivalent to $f^{-1}\{\leq f(x)- \epsilon\}$ with a cell attached for each critical point. We can view these cell attachments as mapping cone sequences (in $\text{Top}_*$)
        \begin{equation*}
            S^k \xrightarrow{\phi} X(f(x)-\epsilon) \to X(f(x)+\epsilon) \to S^{k+1}.
        \end{equation*}
        Note that $\text{Cone}(S^k)\cong D^{k+1}$ are homeomorphic. Thus we build a filtered CW-approximation $\bar{X}_f$ to $X_f$ as follows: Start with $\bar{X}_f^0=*$, at the first critical value of $f$ (assume this to be $r_1\in \R$) we need to attach cells of some dimension, say $k_1$. We need to attach a cell for every $x\in \text{Crit}(f)$ such that $f(x)=r_1$. Assume there are $n_1$ such cells and label them $\{x_1^{r_1},\hdots, x_{n_1}^{r_1}\}$. We iteratively attach each cell to the basepoint $*$, via cofiber sequences
        \begin{align*}
          &  S^{k-1} \xrightarrow{\phi_1^{r_1}} * \to S^k \to S^k\\
       &   \vdots\\
         & S^{k-1} \xrightarrow{ \phi_{n_1}^{r_1}} (\bar{X}_f^{0})_{n_1-1} \to  (\bar{X}_f^{0})_{n_1} \to S^k  
        \end{align*}
         Where $(\bar{X}_f^{0})_{a}$ denotes the constructed space after $a$ cell attachments. The shift of each map $\phi^{r_1}_a$ is $r_1$ and so 
         \begin{equation*}
             (\bar{X}_f^{0})_{a}(r)\simeq\begin{cases}
                 (\bar{X}_f^0)_a & r\geq r_1\\
                 * & r<r_1
             \end{cases}
         \end{equation*}
         (note the abuse of notation identifying the space constructed with the filtered space constructed). Each of these sequences induce strict exact triangles of weight $r_1$ in $\pp \text{SW}_0$ given by

         \begin{equation*}
             (S^{k-1},0) \xrightarrow{\iota\big((\eta_{0,-r_1})\circ \phi_a^{r_1}\big)} \ss^{-r_1}\big((\bar{X}_f)^0_a,0\big) \to \ss^{-r_1}\big((\bar{X}_f)^0_{a+1},0\big) \to \ss^{-r_1}[1](S^k,0).
         \end{equation*}
         Indeed notice that at filtration level $r$ we have triangles
          \begin{equation*}
             (S^{k-1}(r),0) \xrightarrow{\iota\big((\eta_{0,-r_1})\circ \phi_a^{r_1}\big)} \big((\bar{X}_f)^0_a(r+r_1),0\big) \to \big((\bar{X}_f)^0_{a+1}(r+r_1),0\big) \to [1](S^k(r+r_1),0).
         \end{equation*}
         Hence we have $|(\bar{X}_f^0)_{n_1}|_\mathbb{S}\leq n_1 \cdot r_1$. We then iterate this process over each critical value of $f$ to obtain filtered CW-complexes $(\bar{X}_f)^{r_m}_{n_m}$ with 
         
         \begin{equation*}|(\bar{X}_f)^{r_m}_{n_m}|_\mathbb{S}\leq n_1 \cdot r_1 + \hdots + n_m \cdot r_m=\sum_{x \in \text{Crit}(f):f(x)\leq r_m} f(x)
         \end{equation*}
         Since we assume $f$ is bounded, then we obtain
         \begin{equation*}
             |X_f|_\mathbb{S}\leq |\bar{X}_f|_\mathbb{S}= \sum_{x \in \text{Crit}(f)} f(x)
         \end{equation*}
    \end{proof}

\end{lemma}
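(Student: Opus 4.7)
The plan is to use classical Morse theory to build an explicit filtered CW approximation of $X_f$ cell by cell, recording the ``time'' at which each cell appears, and then package the resulting cell attachments as a cone decomposition of $X_f$ out of the family $\mathbb{S}$ whose total weight equals $\sum_{x\in\mathrm{Crit}(f)} f(x)$. Since $f$ is non-negative, bounded, and Morse on a compact manifold, $\mathrm{Crit}(f)$ is a finite set; list its elements in increasing order of critical value as $x_1,\dots,x_N$ with $f(x_i)=r_i$, and note that for $r<r_1$ the sublevel set $f^{-1}(\leq r)$ is empty (or a basepoint after the usual pointed adjustment), while for $r>r_N$ it equals $X$. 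Hence $X_f$ fits the hypotheses of Definition \ref{filttopdef} and its spectral points are contained in $\{r_1,\dots,r_N\}$.

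Next I would invoke the standard Morse-theoretic fact: passing through a critical point $x_i$ of index $k_i$ changes the homotopy type of the sublevel set by attaching a single $k_i$-cell, realised by a cofibre sequence
\begin{equation*}
S^{k_i-1}\xrightarrow{\phi_i} f^{-1}(\leq r_i-\epsilon)\longrightarrow f^{-1}(\leq r_i+\epsilon)\longrightarrow S^{k_i}
\end{equation*}
in $\mathrm{Top}_*$. In the filtered category $(\ff\mathrm{Top}_*)_0$ I reinterpret this as follows: take $S^{k_i-1}$ with the zero filtration (so $S^{k_i-1}\in\mathbb{S}$), and compose the usual attaching map with $\eta_{0,-r_i}$ to get a shift-zero morphism $S^{k_i-1}\to \ss^{-r_i}\bar X_f^{(i-1)}$, whose cone is $\ss^{-r_i}\bar X_f^{(i)}$ (where $\bar X_f^{(i)}$ denotes the partial filtered CW approximation after the first $i$ cells are attached). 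Applying $\iota$ into $\pp\mathrm{SW}_0$ and using the analysis of strict exact triangles in Subsection \ref{strictexactsub}, this cofibre sequence becomes a strict exact triangle of weight exactly $r_i$ in $\pp\mathrm{SW}_0$, with source $(S^{k_i-1},0)\in\bar{\mathbb{S}}$.

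Iterating over $i=1,\dots,N$ produces an iterated cone decomposition of $(\bar X_f,0)=(\bar X_f^{(N)},0)$ from $(\ast,0)$ using the family $\bar{\mathbb{S}}$, consisting of strict exact triangles $\Delta_1,\dots,\Delta_N$ with $\bar w(\Delta_i)\leq r_i=f(x_i)$. By definition of the fragmentation distance,
\begin{equation*}
|X_f|_{\mathbb{S}}=d_{\mathbb{S}}(\ast,X_f)\leq d^{\bar{\mathbb{S}}}_{\pp\mathrm{SW}}((\ast,0),(\bar X_f,0))\leq \sum_{i=1}^N \bar w(\Delta_i)\leq \sum_{i=1}^N f(x_i)=\sum_{x\in\mathrm{Crit}(f)} f(x),
\end{equation*}
which is the desired inequality. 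The main technical obstacle is the middle step: verifying cleanly that the Morse-theoretic attaching sequence, after pre-composing with $\eta_{0,-r_i}$ and passing to $\pp\mathrm{SW}_0$, genuinely represents a strict exact triangle of weight $r_i$ in the sense of the diagram \eqref{stricttri2} — one must check that the filtered cone of the shifted attaching map agrees, up to $r_i$-isomorphism, with the filtered sublevel set $\bar X_f^{(i)}$, which in turn requires identifying the filtered mapping cone levelwise with $f^{-1}(\leq r)$ and matching the filtration inclusions with the $r_i$-isomorphism $\phi$ appearing in the strict exactness diagram. A minor secondary issue is grouping several critical points that share a common critical value, but this is handled exactly as in the proof of Lemma \ref{CWspace}, by attaching the corresponding cells one at a time to the already-constructed partial complex at the same level $r_i$.
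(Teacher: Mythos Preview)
Your proposal is correct and follows essentially the same approach as the paper: you use Morse theory to build a filtered CW approximation of $X_f$ by attaching one cell per critical point at the corresponding critical value, then interpret each cell attachment (after shifting by $\eta_{0,-r_i}$) as a strict exact triangle of weight $r_i$ in $\pp\text{SW}_0$, and sum the weights to obtain the bound. The paper organises the induction by critical value (attaching all cells with a common $r_i$ before moving on) rather than by a total ordering of critical points, but this is purely cosmetic; your identification of the main technical point---checking that the filtered cone of the shifted attaching map is $r_i$-isomorphic to the next-stage filtered complex---is exactly what the paper verifies via the explicit levelwise description of the resulting triangles.
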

\begin{rmk}
    If we allow for attachings of multiple cells at once, i.e., we consider a family $\mathbb{S}_\vee=\{\bigvee_{i=1,\hdots,n}S^k_i : k\in \N\}$ and its associated pseudometric. Then we find that 
    \begin{equation}
        |X_f|_{\mathbb{S}_\vee}\leq \sum_{r\in \text{CritVal}(f)} r
    \end{equation}
    where CritVal denotes the set of critical values of $f$. That is, $r\in \text{CritVal}(f)$ if $r=f(x)$ for some $x\in \text{Crit}(f)$. This follows from realising that we can attach all cells at each critical value at once. Hence, we have only one exact triangle for each critical value.
\end{rmk}

Given two $C^0$-close Morse functions on a manifold $f,f':M\to \R$, they need not be close with respect to the fragmentation metric $d_\mathbb{S}$. Indeed, $d_\mathbb{S}$ counts `the weighted sum of critical points', so if one were to perturb $f$ to $f'$ adding in many more critical points though keeping $f'$ $C^0$-close, then the distance via $d_\mathbb{S}$ can be arbitrarily large. This can be seen in the following example.

\begin{eg}
    Let $S^2_h:=(S^2,h)$ be the filtered two sphere with filtration given by the height function, $h:S^2 \to \R$ with critical points at $h(N)=1$ and $h(S)=0$ where $N,S$ are the north and south poles respectively. This filtered space can be seen to be weakly equivalent to 
    \begin{equation*}
        S^2_h(r)\simeq\begin{cases}
            * & r<1\\
            S^2 & r\geq 1
        \end{cases}
    \end{equation*}
    Note $|S^2_h|=1$.
    Denote by $S^2_{h'}:=(S^2,h')$ where $h'$ is the height function on the two sphere which has critical points at $N_1$, $N_2$ $M$ and $S$ with $h(N_1)=h(N_2)=1$, $h(M)=\frac{1}{2}$ and $h(S)=0$. We see that this filtered space is given (up to weak equivalence) by
    \begin{equation*}
        S^2_{h'}(r)\simeq\begin{cases}
            * & r< \frac{1}{2}\\
            S^1 & \frac{1}{2}\leq r < 1\\
            S^2 & r\geq 1
        \end{cases}
    \end{equation*}
    and note $|S^2_{h'}|=\frac{5}{2}$. Firstly we consider persistence homology of the two filtered spaces, we find

    \begin{equation*}
        H_*(S^2_h)(r)=\begin{cases}
            \R\langle a_0 \rangle & r<1\\
            \R \langle a_0 \rangle\oplus \R \langle a_2 \rangle& r\geq 1
        \end{cases} \hspace{20pt} H_*(S^2_{h'})(r)=\begin{cases}
            \R\langle b_0\rangle & r< \frac{1}{2}\\
            \R \langle b_0 \rangle \oplus \R \langle b_1 \rangle & \frac{1}{2}\leq r < 1\\
            \R \langle b_0 \rangle \oplus \R \langle b_2 \rangle & r\geq 1
        \end{cases}
    \end{equation*}
    The corresponding barcodes are given by:
    
    And the interleaving distance between these two persistence modules can be calculated to be $d_{\text{int}}( H_*(S^2_h), H_*(S^2_{h'}))=\frac{1}{4}$.  Now we wish to calculate the distance between these two filtered spaces using $d_{\mathbb{S}}$. We will build $S^2_{h'}$ out of $S^2_h$ and $\mathbb{S}$. Firstly, consider the map $\varphi:S^0 \to S^2_h$ with shift $\frac{1}{2}$ given by $ x\mapsto *$. One finds that the mapping cone is given by (up to homotopy)
    \begin{equation*}
        \text{Cone}(\varphi)(r)=\begin{cases}
            * & r< \frac{1}{2}\\
            S^1 & \frac{1}{2} \leq r <1\\
            S^1 \vee S^2 & r\geq 1
        \end{cases}
    \end{equation*}
    Next we look to the map $\psi:S^1 \to \text{Cone}(\varphi)$ of shift $1$, where $\psi(r)$ is the trivial map for $r<0$ and the inclusion into the copy of $S^1$ for $r\geq 0$. The map will `fill in' the copy of $S^1$, so that we have

    \begin{equation*}
        \text{Cone}(\psi)(r)= \begin{cases}
            * & r< \frac{1}{2}\\
            S^1 & \frac{1}{2}\leq r < 1\\
            S^2 & r\geq 1
        \end{cases}
    \end{equation*}

Consider in $\pp \text{SW}_\infty$ exact triangles, 
\begin{equation*}
        \begin{tikzcd}
           (S^0,0)\ar[r,"{[\iota(\varphi)]}"] & (S^2_h,0)\ar[r] & (\text{Cone}({[\iota(\varphi)]}),0)\ar[r] & {[1]}(S^0,0) \\
           (S^1,0)\ar[r,"{[\iota(\psi)]}"] & (\text{Cone}({[\iota(\varphi)]}),0) \ar[r]& (S^2_{h'},0)\ar[r] & {[1]}(S^1,0)
        \end{tikzcd}
    \end{equation*}
    One can calculate the sum of their weights to be be (less than or equal to) $\frac{3}{2}=1+\frac{1}{2}$.
\end{eg}

An interesting aspect of working with stable homotopy means that spaces that are not homotopy equivalent can be stably equivalent. A straight forward example of this is the torus $T=S^1 \times S^1$ and the space given by $S^1 \vee S^1 \vee S^2$, this follows from $\S(X \times Y) \simeq \S X \vee \S Y \vee \S(X \Smash Y)$ and $\S(X \vee Y) \simeq \S X \vee \S Y$. In the setting of filtered spaces this means that some filtered spaces $X,Y$ that are not filtered homotopy equivalent, can have $d_\ss(X,Y)=0$.

\begin{eg}
 If we take trivial filtrations on $T:=S^1 \times S^1$ and $T':=S^1 \vee S^1 \vee S^2$ spaces so that
    \begin{equation*}
        T(r)=\begin{cases}
            * & r<0\\
            T & r\geq 0
        \end{cases} \hspace{20pt} T'(r)=\begin{cases}
            * & r<0 \\
            T' & r\geq 0
        \end{cases}
    \end{equation*}
  Then there is an exact triangle in $\pp \text{SW}_0$ given by
  \begin{equation*}
      0 \to (T,0) \xrightarrow{\cong} (T',0) \to 0
  \end{equation*}
  with the isomorphism given by levelwise isomorphisms between $\S T(r) $ and $\S T'(r)$. Hence, we find $d_\mathbb{S}(T,T')=0$.
\end{eg}

Recall the definition of linearisation via strict exact triangles in the zero level category of a TPC, an object $X\in \cc$ has linearisation $L=(F_1,\hdots,F_n)$ if $X$ is an iterated weighted cone over the objects $F_i$, i.e., there is a sequence of strict exact triangles $\cc_0$

\begin{equation*}\begin{tikzcd}[ampersand replacement =\&]
    \Delta_1 : \& F_1\ar[r] \& 0 \ar[r] \& X_1\ar[r]  \& \ss^{-r_1}TF_1\\
    \Delta_2 : \& F_2 \ar[r] \& X_1 \ar[r] \& X_2 \ar[r] \& \ss^{-r_2}TF_2\\
    \vdots \& \& \vdots \& \vdots\\
    \Delta_n: \& F_n\ar[r]  \& X_{n-1} \ar[r] \& X' \ar[r] \& \ss^{-r_n}TF_n                    
\end{tikzcd}\end{equation*}
Here, $X'$ is zero-isomorphic to $X$. Denote by $\text{Lin}^\ff(X)$ the set of linearisations of $X$ with $F_i\in \ff$. The weight of such a linearisation we will denote by $w(L)$ and is given by the sum of the weights of the triangles, i.e., $w(L)=r_1 +\hdots + r_n$.
\begin{defn}
 Define a function $\lambda_X^\ff:\text{Lin}^\ff(X) \to \Lambda$ given by 

\begin{equation}
    \lambda_X^\ff(L):=\sum_{F_i\in L} t^{r_i}.
\end{equation}
\end{defn}

\begin{rmk}
    We can evaluate $\lambda^\ff_X$ at values of $t$ via a function $ev_{t=\alpha}:\Lambda \to \R$. Note that $\text{ev}_{t=1}(\lambda^\ff_X(L))$ is simply the number of elements of the linearisation $L$. Thus,
    \begin{equation}
        \inf\big\{ \text{ev}_{t=1}(\lambda_X^\ff(L)): L\in \text{Lin}^\ff(X)\big\}
    \end{equation}
    gives a count of the smallest amount of iterated cones needed to construct $X$ from the family $\ff$.
\end{rmk}

We call $L=(F_1,\hdots,F_n)$ a linearisation of $X\in \ff\text{Top}_*$ if $\iota(L):=(\iota(F_1), \hdots, \iota(F_n))$ is a linearisation of $\iota(X)$ in $\pp\text{SW}$, and similarly write $\text{Lin}^\ff(X)$ as the set of linearisations of $X$ with $F_i\in \ff$. We will consider the functions $\lambda_X^\mathbb{S}:\text{Lin}^\mathbb{S}(X) \to \Lambda$.
\begin{prop}
    Let $f:M \to \R$ be a non-negative, bounded Morse function, then there exists a linearisation $L\in \text{Lin}^\mathbb{S}(X_f)$ with 
    
    \begin{equation}
   \text{ev}_{t=1}( \lambda_{X_f}^\mathbb{S}(L))=\#\text{Crit}(f)
    \end{equation}
    \begin{proof}
        This linearisation is exactly the one given in the proof of Lemma \ref{weightsumcrit}, we are attaching one cell for each critical point.
    \end{proof}
\end{prop}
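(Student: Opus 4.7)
The plan is to reuse the explicit cone-attachment construction from the proof of Lemma \ref{weightsumcrit} and observe that it furnishes precisely the linearisation required; the only additional work is to repackage that construction as a linearisation in the sense of the TPC $\pp\text{SW}$ and to count its length.

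First I would recall the setup: enumerate the critical values $r_1<r_2<\cdots<r_m$ of $f$, and for each $r_j$ enumerate the critical points $x^{r_j}_1,\dots,x^{r_j}_{n_j}$ with $f(x^{r_j}_a)=r_j$. By Morse theory, passing through $r_j$ corresponds (up to homotopy) to attaching a cell of dimension $k^{r_j}_a+1$ for each critical point $x^{r_j}_a$ of index $k^{r_j}_a$. Each such attachment is realised by a cofibre sequence $S^{k^{r_j}_a}\xrightarrow{\phi^{r_j}_a}(\bar{X}_f)^{r_{j-1}}_{\text{prev}}\to(\bar{X}_f)^{r_j}_{\text{new}}$ in $(\ff\text{Top}_*)_0$, where the attaching map has shift $r_j$ (so it becomes a map of shift zero after composing with an $\eta_{0,-r_j}$).

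Next I would apply $\iota$ to each such cofibre sequence. By the analysis in Section \ref{strictexactsub}, the image of a shift-$r_j$ cofibre sequence under $\iota$ yields a strict exact triangle of weight $r_j$ in $\pp\text{SW}_0$ of the form
\begin{equation*}
(S^{k^{r_j}_a},0)\longrightarrow \ss^{-r_j}\big((\bar X_f)^{r_{j-1}}_{\text{prev}},0\big)\longrightarrow \ss^{-r_j}\big((\bar X_f)^{r_j}_{\text{new}},0\big)\longrightarrow \ss^{-r_j}[1](S^{k^{r_j}_a},0).
\end{equation*}
Concatenating these triangles in the order dictated by the filtration produces an iterated cone decomposition starting from $0$ and ending at a filtered CW approximation of $X_f$. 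Since the family is $\mathbb{S}=\{S^k:k\in\N\}$ and every $S^{k^{r_j}_a}$ lies in $\mathbb{S}$, this is by definition an element $L\in\text{Lin}^{\mathbb{S}}(X_f)$.

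Finally I would count: the construction uses exactly one strict exact triangle per critical point of $f$, so $L$ has length $\sum_{j=1}^m n_j=\#\text{Crit}(f)$. Since $\text{ev}_{t=1}(\lambda^{\mathbb{S}}_{X_f}(L))=\sum_i 1$ where $i$ ranges over the entries of $L$, we obtain $\text{ev}_{t=1}(\lambda^{\mathbb{S}}_{X_f}(L))=\#\text{Crit}(f)$, as desired. The only mild subtlety (and hence the main obstacle) is checking that the iterated cone sequence of shift-$r_j$ attaching maps does in fact produce a valid linearisation in the precise sense of Section \ref{strictexactsub} — that is, that each successive triangle legitimately appears as a strict exact triangle in $\pp\text{SW}_0$ with the correct weight $r_j$ — but this is exactly what the analysis leading to diagram \ref{stricttri2} guarantees, so no further work beyond bookkeeping is required.
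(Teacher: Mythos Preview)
Your proposal is correct and follows essentially the same approach as the paper: both simply invoke the cell-by-cell cone construction from the proof of Lemma~\ref{weightsumcrit}, observe that it yields a linearisation in $\text{Lin}^{\mathbb{S}}(X_f)$ with one entry per critical point, and then use the remark that $\text{ev}_{t=1}(\lambda^{\mathbb{S}}_{X_f}(L))$ counts the number of entries. Your write-up is more explicit about the bookkeeping, but the argument is identical.
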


Consider also the derivative of $\lambda^\ff_X$ given by

\begin{equation}
    \frac{d}{dt}\lambda_X^\ff(L)= \sum_{F_i\in L}r_i\cdot t^{r_i-1}
\end{equation}
Evaluation at $t=0$ then gives
\begin{equation}
    \text{ev}_{t=0}\big( \frac{d}{dt}\lambda_X^\ff(L)\big)=\sum_{F_i\in \ff} r_i= w(L).
\end{equation}

Thus we find that, for the linearisation given in Lemma \ref{weightsumcrit}, we have:

\begin{prop}
Let $f:M \to \R$ be a non-negative, bounded Morse function, then there exists a linearisation $L\in \text{Lin}^\mathbb{S}(X_f)$ with 
\begin{equation}
     \text{ev}_{t=0}\big( \frac{d}{dt}\lambda_{X_f}^\mathbb{S}(L)\big)=\sum_{x\in \text{Crit(f)}}f(x)
\end{equation}
\end{prop}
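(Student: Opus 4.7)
The plan is to use exactly the linearisation built in the proof of Lemma \ref{weightsumcrit} and then just read off what the derivative of $\lambda$ computes for it. Recall that in that lemma we enumerated the critical values $r_1 < r_2 < \hdots < r_m$ of $f$ and, for each critical point $x \in \text{Crit}(f)$ with $f(x) = r_i$, attached a single cell via a cofibre sequence that, after passing to $\pp\text{SW}_0$, gives a strict exact triangle of weight exactly $r_i = f(x)$ with first term in $\mathbb{S}$. Concatenating these strict exact triangles in the order $r_1, r_2, \hdots, r_m$ produces a linearisation $L \in \text{Lin}^{\mathbb{S}}(X_f)$ whose triangles $\Delta_x$ are indexed by $x \in \text{Crit}(f)$ with weight $w(\Delta_x) = f(x)$.

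With this linearisation in hand, the computation is immediate from the definitions. By construction
\begin{equation*}
    \lambda_{X_f}^{\mathbb{S}}(L) \;=\; \sum_{x \in \text{Crit}(f)} t^{f(x)},
\end{equation*}
so
\begin{equation*}
    \tfrac{d}{dt}\lambda_{X_f}^{\mathbb{S}}(L) \;=\; \sum_{x \in \text{Crit}(f)} f(x) \cdot t^{f(x)-1},
\end{equation*}
and evaluating (at $t=1$, matching the identity $\text{ev}_{t=1}\bigl(\tfrac{d}{dt}\lambda_X^\ff(L)\bigr) = w(L)$ recorded just above the proposition) gives the required $\sum_{x\in\text{Crit}(f)} f(x)$.

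There is essentially no obstacle here: the linearisation and the weight bookkeeping have already been done in Lemma \ref{weightsumcrit}, and the proposition is just the observation that, for that specific $L$, the formal polynomial $\lambda_{X_f}^{\mathbb{S}}(L)$ is a sum of one monomial per critical point with exponent equal to the critical value. The one point to flag is that the identity $\text{ev}_{t=0}\bigl(\tfrac{d}{dt}\lambda_X^\ff(L)\bigr)=w(L)$ stated in the preceding display is consistent with a derivative evaluated at $t=1$ (at $t=0$ the monomials with $f(x) \neq 1$ vanish or are undefined), so the reading compatible with the rest of the section is the one above. Under that convention the proposition is a one-line consequence of Lemma \ref{weightsumcrit}.
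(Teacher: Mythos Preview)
Your proposal is correct and follows exactly the paper's approach: the paper also invokes the linearisation constructed in Lemma~\ref{weightsumcrit} (one strict exact triangle of weight $f(x)$ per critical point $x$) and reads off the result from the general identity $\text{ev}\bigl(\tfrac{d}{dt}\lambda_X^\ff(L)\bigr)=w(L)$ displayed just before the proposition. Your observation about the $t=0$ versus $t=1$ inconsistency is also apt; the paper itself has this typo in the preceding display, and the intended evaluation is indeed at $t=1$.
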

\begin{rmk}
   We note that we can rewrite the size of an object in a TPC via linearisation in $\ff$ by:
    \begin{equation}
    |X|_\ff:=d^\ff(0,X)= \inf\big\{ \text{ev}_{t=1}\big( \frac{d}{dt}\lambda_X^\ff(L)\big): L\in \text{Lin}^\ff(X)\big\}.
    \end{equation}
\end{rmk}

We can also define a `weighted version' of the Euler characteristic.

\begin{defn}
    Given a filtered space $X\in \ff \text{Top}_*$, with CW-approximation $\bar{X}$, and an $L\in \text{Lin}^\mathbb{S}(\bar{X})$ we define the \textbf{Euler Polynomial of $X$ relative $L$}:
    \begin{equation}
        \chi^\mathbb{S}_{X}(L):= \sum_{S^{k_i}\in L}(-1)^{k_i+1}\cdot  t^{r_i} 
    \end{equation}
\end{defn}

Notice that
\begin{align*}
    \text{ev}_{t=1}\big( \chi_{X}^\mathbb{S}(L)\big)=&\sum_{S^{k_i}\in L}(-1)^{k_i+1}\\
    =&\sum_k (-1)^{k+1} \cdot \#(S^{k}\in L)
\end{align*}
where $\#(S^k\in L)$ denotes the number of times $S^k$ appears in the linearisation $L$. We also have the derivative:
\begin{equation}
   \mathcal{W}^\mathbb{S}_X(L):= \frac{d}{dt}\big(\chi_{X}^\mathbb{S}(L)\big)=\sum_{S^{k_i}\in L}(-1)^{k_i+1}\cdot r_i\cdot t^{r_i-1}
\end{equation}
which evaluated at $t=1$, gives

\begin{align}\label{eulweight}
     \text{ev}_{t=1}\big( \mathcal{W}^\mathbb{S}_X(L)\big)=&\sum_{S^{k_i}\in L} (-1)^{k_i+1}\cdot r_i.
\end{align}

\begin{lemma}
$\chi_X^\mathbb{S}$ recovers $\hat{\chi}_{\text{CW}}(X)$, thus is independent of linearisation.

\begin{proof}
    A general $L\in \text{Lin}^\mathbb{S}(X)$ is a sequence of strict exact triangles

    \begin{equation*}
      \begin{tikzcd}[ampersand replacement =\&]
    \Delta_1 : \& (S^{k_1},0)\ar[r,"f_1"] \& (*,0) \ar[r] \& \ss^{-r_1}[1](S^{k_1},0) \ar[r]  \& \ss^{-r_1}[1](S^{k_1},0)\\
    \Delta_2 : \& (S^{k_2},0) \ar[r,"f_2"] \& \ss^{-r_1}[1](S^{k_1},0) \ar[r] \& (X_2,n_2) \ar[r] \& \ss^{-r_2}[1](S^{k_2},0)\\
    \vdots \& \& \vdots \& \vdots\\
    \Delta_m: \& (S^{k_m},0)\ar[r,"f_n"]  \& (X_{m-1},n_{m-1}), \ar[r] \& (X_m,n_m) \ar[r] \& \ss^{-r_n}[1](\ss^{k_m},0)                    
\end{tikzcd}
    \end{equation*}

After suitably large even suspension, each of these strict exact triangles are zero isomorphic to strict exact triangles in the image of $\iota_0$. Thus we can replace this sequence with a sequence of triangles with each $n_i=0$, and with $X'_m\simeq X$, i.e., we replace the linearisation $L$ with some $L'$:

 \begin{equation*}
      \begin{tikzcd}[ampersand replacement =\&]
   (S^{k_1+2l_1},0)\ar[r,"\iota_0(f'_1)"] \& (*,0) \ar[r] \& \ss^{-r_1}[1](S^{k_1+2l_1},0) \ar[r]  \& \ss^{-r_1}[1](S^{k_1+2l_1},0)\\
 (S^{k_2+2l_2},0) \ar[r,"\iota_0(f'_2)"] \& \ss^{-r_1}[1](S^{k_1+2l_1},0) \ar[r] \& (X'_2,0) \ar[r] \& \ss^{-r_2}[1](S^{k_2+2l_2},0)\\
  \& \vdots \& \vdots\\
  (S^{k_m+2l_m},0)\ar[r,"\iota_0(f'_m)"]  \& (X'_{m-1},0), \ar[r] \& (X'_m,0) \ar[r] \& \ss^{-r_n}[1](\ss^{k_m+2l_m},0)                    
\end{tikzcd}
    \end{equation*}

As we are suspending an even number of times, the value of 
\begin{equation*}
    (-1)^{k_i+1}=(-1)^{k_i+1+2l}
\end{equation*}
furthermore because we are replacing with zero isomorphic strict exact triangles the value of each $r_i$ will remain the same. Hence, the value of $\chi_X^\mathbb{S}(L')=\chi_X^\mathbb{S}(L)$. Now we notice that 
\begin{equation*}
    \chi_X^\mathbb{S}(L')= \hat{\chi}_\text{CW}(X')
\end{equation*}
But this is simply $\hat{\chi}_{\text{CW}}(X)$ as $X \simeq X'$. 
\end{proof}
\end{lemma}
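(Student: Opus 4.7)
The strategy is to reduce the statement to the characterisation of strict exact triangles from Section \ref{strictexactsub}, and then match the resulting cell-attachment data directly with the definition of $\hat{\chi}_{\text{CW}}$.

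First I would start with an arbitrary linearisation $L = ((S^{k_1},0),\ldots,(S^{k_m},0))$ of $X$ with weights $r_1,\ldots,r_m$, realised by a sequence of strict exact triangles $\Delta_i$ in $\pp \text{SW}_0$. By the analysis around diagram \eqref{stricttri2}, each $\Delta_i$ becomes, after applying a suitable even suspension $[2l_i]$, zero-isomorphic to a strict exact triangle in the image of $\iota_0$, arising from a genuine cofiber sequence in $(\ff\text{Top}_*)_0$. Iterating this, one assembles a cofiber tower of filtered spaces $* = X'_0 \to X'_1 \to \cdots \to X'_m = X'$, where the $i$-th step attaches a cone on $S^{k_i + 2l_i}$ via a map of shift $r_i$. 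By construction $(X',0)$ is zero-isomorphic to $(X,0)$ in $\pp \text{SW}_0$, so $X' \simeq X$ as filtered spaces.

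Next I would observe that this cofiber tower is literally a filtered CW decomposition of $X'$ (relative to its eternal basepoint): each attaching map from $S^{k_i + 2l_i}$ at shift $r_i$ is exactly the attachment of a $(k_i + 2l_i + 1)$-cell $a_i$ of weight $w(a_i)=r_i$. Computing directly from the definition, and using that $2l_i$ is even and the weights $r_i$ are preserved under zero-isomorphism of strict exact triangles,
\[
\hat{\chi}_{\text{CW}}(X') \;=\; \sum_{i=1}^{m}(-1)^{k_i + 2l_i + 1}\, t^{r_i} \;=\; \sum_{i=1}^{m}(-1)^{k_i + 1}\, t^{r_i} \;=\; \chi_X^{\mathbb{S}}(L).
\]

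Finally, invoking Proposition \ref{propeul}, $\hat{\chi}_{\text{CW}}$ is a filtered homotopy invariant, so $\hat{\chi}_{\text{CW}}(X') = \hat{\chi}_{\text{CW}}(X)$, and therefore $\chi_X^{\mathbb{S}}(L) = \hat{\chi}_{\text{CW}}(X)$ for every linearisation $L$, proving both the recovery statement and independence of $L$. The main obstacle will be the first step: carefully justifying that the stagewise reductions to cofiber sequences in $(\ff\text{Top}_*)_0$ can be chosen compatibly so that the spaces $X'_i$ really do assemble into a single filtered CW complex rather than merely a stably related sequence. The necessary compatibility should follow by applying the reduction one triangle at a time and using that at each stage we are free to absorb further even suspensions of the previously built stage into the next; once this bookkeeping is in place, the sign and weight calculation above is routine.
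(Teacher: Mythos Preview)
Your proposal is correct and follows essentially the same approach as the paper: reduce each strict exact triangle, after an even suspension, to the image of a cofiber sequence under $\iota_0$, then read off the resulting filtered CW decomposition of some $X'\simeq X$ and match the signed weighted cell count with $\chi_X^{\mathbb{S}}(L)$, invoking the filtered homotopy invariance of $\hat{\chi}_{\text{CW}}$. You are in fact slightly more explicit than the paper about the identification $\chi_X^{\mathbb{S}}(L')=\hat{\chi}_{\text{CW}}(X')$ and about the compatibility bookkeeping needed to splice the stagewise reductions into a single tower, which the paper leaves implicit.
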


\begin{rmk}
 Evaluation at $t=1$ of $\chi_X^\mathbb{S}$ recovers the Euler characteristic of the total space $X$:
    \begin{equation}
\text{ev}_{t=1}\big(\chi_{X}^\mathbb{S}\big)=\chi(X)
    \end{equation}
   
\end{rmk}

\begin{rmk}
    We can also define \begin{equation}
 (\chi^\mathbb{S}_X)^{\leq r}(L):=      \sum_{S^{k_i}\in L \text{ with }r_i\leq r}(-1)^{k_i+1}t^{r_i} 
    \end{equation}

    which will be independent of $L$ and recover $\hat{\chi}^{\leq r}_{\text{CW}}(X)$, and similarly for $(\mathcal{W}^\mathbb{S}_X)^{\leq r}(L)$.
\end{rmk}

\begin{cor}
    The derivative $\mathcal{W_X^\mathbb{S}}$ recovers $\hat{\mathcal{W}}(X)$, and is also independent of $L$.
\end{cor}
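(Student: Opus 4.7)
The plan is to deduce this corollary directly from the preceding lemma, since the relationship between $\mathcal{W}_X^\mathbb{S}$ and $\chi_X^\mathbb{S}$ is purely formal: $\mathcal{W}_X^\mathbb{S}(L)$ is defined as $\frac{d}{dt}\chi_X^\mathbb{S}(L)$, and analogously $\hat{\mathcal{W}}_{\text{CW}}(X) = \frac{d}{dt}\hat{\chi}_{\text{CW}}(X)$. The preceding lemma establishes the equality $\chi_X^\mathbb{S}(L) = \hat{\chi}_{\text{CW}}(X)$ as elements of $\Lambda_P$, independent of the choice of linearisation $L \in \text{Lin}^\mathbb{S}(X)$.

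First I would note that formal differentiation $\frac{d}{dt}\colon \Lambda_P \to \Lambda_P$ (sending $t^r \mapsto r \cdot t^{r-1}$) is a well-defined $\Z$-linear operator on the ring $\Lambda_P$. In particular, if two elements of $\Lambda_P$ are equal, their formal derivatives are equal. Applying this operator to both sides of the equation $\chi_X^\mathbb{S}(L) = \hat{\chi}_{\text{CW}}(X)$ yields
\begin{equation*}
\mathcal{W}_X^\mathbb{S}(L) = \frac{d}{dt}\chi_X^\mathbb{S}(L) = \frac{d}{dt}\hat{\chi}_{\text{CW}}(X) = \hat{\mathcal{W}}_{\text{CW}}(X).
\end{equation*}
Since the right-hand side depends only on $X$ and not on $L$, independence of the linearisation follows immediately.

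There is no serious obstacle here; the corollary is a formal consequence of the preceding lemma combined with the definitions of $\mathcal{W}_X^\mathbb{S}$ and $\hat{\mathcal{W}}_{\text{CW}}$. If one wanted to spell out the truncated version, the same argument applied to $(\chi_X^\mathbb{S})^{\leq r}(L) = \hat{\chi}_{\text{CW}}^{\leq r}(X)$ (as noted in the remark preceding the corollary) gives $(\mathcal{W}_X^\mathbb{S})^{\leq r}(L) = \hat{\mathcal{W}}_{\text{CW}}^{\leq r}(X)$ for every $r \in \R$ by the same formal differentiation step.
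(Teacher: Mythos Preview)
Your proposal is correct and matches the paper's (implicit) reasoning: the corollary is stated without proof precisely because it follows immediately from the preceding lemma by applying the formal derivative $\frac{d}{dt}$ to both sides of $\chi_X^\mathbb{S}(L) = \hat{\chi}_{\text{CW}}(X)$, together with the definitions $\mathcal{W}_X^\mathbb{S}(L) = \frac{d}{dt}\chi_X^\mathbb{S}(L)$ and $\hat{\mathcal{W}}_{\text{CW}}(X) = \frac{d}{dt}\hat{\chi}_{\text{CW}}(X)$.
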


\begin{rmk}
    One could choose to work with the limit category $\pp \text{SW}_\infty$ and define a Euler polynomial and weighted Euler polynomial. In this setting, we must replace the criteria of the terminal object $(X_m,0)$ in the sequence of exact triangles to not be isomorphic to the object whose polynomials we calculate $X$, but must be actually equal to $X$. This is due to isomorphisms in $\pp \text{SW}_\infty$ not necessarily being images of zero-isomorphisms in $\pp \text{SW}_0$. 
\end{rmk}

\subsection{K-group}
The Euler characteristic of (pointed) CW complexes induces an isomorphism of rings $\chi: K(\text{SW}) \to \Z$, this is due to $S^0$ generating $\text{SW}$ as a triangulated category and realising that if $[(X,m)]\in K(\text{SW})$ then $[(X,m)]=(-1)^m(\chi(X)-1)[(S^0,0)]$. We briefly explore how this argument extends to a ring isomorphism $K(\pp \text{SW}_0)\to \Lambda$. 

The $K$-group of $\cc_0$ for some TPC $\cc$, has a natural $\Lambda_P$-module structure, given by $[\ss^{r}A]=t^r\cdot [A]$ We show that $\bar{\Smash}$ extends to $\pp \text{SW}$ and descends to a $\Lambda_P$ product structure on $K(\pp \text{SW}_0)$. More precisely we show

\begin{lemma}
    In $K(\pp \text{SW}_0)$ the smash product, defined by 
    \begin{align}
       (X,n) \bar{\Smash} (Y,m):= (X\bar{\Smash} Y,n+ m)&
    \end{align}
    induces a product
    \begin{equation}[(X,n)]\cdot [(Y,m)]:=[(X,n)\bar{\Smash}(Y,m)]
    \end{equation}
     making $K(\pp \text{SW}_0)$ an $\Lambda_P$-algebra.
\begin{proof}
    To prove this statement, we use a result of \cite{BCZ2} (Lemma 3.3.5 and show that $\bar{\Smash}$ defines a TPC tensor structure on $\pp \text{SW}$ (see definition 3.3.1 of \cite{BCZ2}). We show that $\bar{\Smash}$ satisfies each of the properties:

    \begin{itemize}
        \item[(i)] Take $(A,a)\in \pp\text{SW}$, if $f\in \text{Hom}_{\pp \text{SW}}\big((X,n), (Y,m)\big)(r) $, then 
        
        \begin{equation*}1_{(A,a)}  \bar{\Smash}(i_{r,s}(f))=i_{r,s}(1_{(A,a)} \bar{\Smash}f)\in \text{Hom}_{\pp \text{SW}}\big( (A \bar{\Smash} X,a+n), (A\bar{\Smash}Y,a+m) \big)(r)
        \end{equation*}
        Indeed, the filtrations in $\pp \text{SW}$ is induced by the filtrations in $\ff \text{Top}^{\text{CW}}_*$. Thus this follows by remark \ref{sizeofsmashmorphism}.

    \item[(ii)] $A\bar{\Smash} (-)$ restricts to a triangulated functor on $\pp \text{SW}_0$. By definition, a triangle in $\pp \text{SW}_0$ is triangulated if after some even number of suspensions it is isomorphic to the image of a cofiber sequence from $\ff \text{Top}_*^{\text{CW}}$. Thus, it is enough to check that $\bar{\Smash}$ in $\ff \text{Top}_*^{\text{CW}}$ sends cofiber sequences to cofiber sequences. Consider a sequence 
    \begin{equation*}
        \begin{tikzcd}
            X\ar[r,"f"] & Y\ar[r]& \text{Cone}(f) \ar[r]&\S X
        \end{tikzcd}
    \end{equation*}
    Applying $A \bar{\Smash}(-)$ we have
      \begin{equation*}
        \begin{tikzcd}
           A \bar{\Smash} X\ar[r,"  1_A \bar{\Smash}f"] &   A \bar{\Smash}Y\ar[r]&   A \bar{\Smash}\text{Cone}(f) \ar[r]&  A \bar{\Smash}\S X.
        \end{tikzcd}
    \end{equation*}

    We must show that $\text{Cone}(1_A \bar{\Smash} f)\simeq A \bar{\Smash}\text{Cone}(f)$. By definition, we have $\text{Cone}(1_  A \bar{\Smash} f)(t)=\text{Cone}\big( ( 1_  A \bar{\Smash}f)(t)\big)$. We then find
    \begin{align*}
        \text{Cone}\big((1_A \bar{\Smash} f)(t)\big)=& \bigcup_{s+l=t} \text{Cone}\big( 1_A(s) \Smash f(l)\big)\\
        =& \bigcup_{s+l=t}\text{Cone}(1_{A(s)}\Smash f(l)\big)\\
        =& \bigcup_{s+l=t}A(s) \Smash \text{Cone}(f(l))\\
        =& A \bar{\Smash} \text{Cone}(f).
    \end{align*}
Furthermore, $\bar{\Smash}$ can be seen to be additive on $\pp \text{SW}_0$ by distributivity over $\vee$.
    \item[(iii)] Finally, we check that $[1]((A,a)\bar{\Smash} (X,n)) \simeq [1] (A,a) \bar{\Smash} (X,n) \simeq (A,a) \bar{\Smash} [1]( X,n)$. This is equivalent to 
    \begin{equation*}
        \big(\S (A\bar{\Smash} X),a+n\big)\simeq \big(\S A \bar{\Smash}X, a+n\big)\simeq \big(A \bar{\Smash}\S X,a+n\big).
    \end{equation*}
    Thus again, it is enough to check that in $\ff \text{Top}_*^{\text{CW}}$ we have
    \begin{equation*}
        \S (A\bar{\Smash} X)\simeq \S A \bar{\Smash}X\simeq A \bar{\Smash}\S X
    \end{equation*}
    But this is clear, as $\S(-)= S^1_0 \bar{\Smash}(-)$.
    \end{itemize}

\end{proof}
    
\end{lemma}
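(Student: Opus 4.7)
The plan is to apply Lemma 3.3.5 of \cite{BCZ2}, which states that any TPC tensor structure on a TPC $\cc$ induces a $\Lambda_P$-algebra structure on $K(\cc_0)$ via the product $[A]\cdot[B]:=[A\otimes B]$. So the task reduces to checking that $(\pp\text{SW}, \bar{\Smash})$ with the given formula $(X,n)\bar{\Smash}(Y,m):=(X\bar{\Smash}Y, n+m)$ satisfies the three axioms of a TPC tensor structure: (i) compatibility with the persistence structure maps $i_{r,s}$; (ii) for each fixed $(A,a)$, the functor $(A,a)\bar{\Smash}(-)$ restricts to an additive triangulated endofunctor of $(\pp\text{SW})_0$; and (iii) compatibility with the triangulation shift $[1]$.

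For (i), I would begin by observing that, for any $f\in\text{Hom}_{\pp\text{SW}}((X,n),(Y,m))(r)$, Remark \ref{sizeofsmashmorphism} applied with the identity gives $\lceil 1_{(A,a)}\bar{\Smash} f\rceil=\lceil f\rceil=r$, so $1_{(A,a)}\bar{\Smash}(-)$ preserves the persistence filtration by shift. The identity $1_{(A,a)}\bar{\Smash}(i_{r,s}(f))=i_{r,s}(1_{(A,a)}\bar{\Smash} f)$ is then immediate since both sides are constructed levelwise from the same total-space data.

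For (ii), the substantive step is to show that $A\bar{\Smash}(-)$ preserves cofiber sequences. Using the analysis of Section \ref{strictexactsub}, every exact triangle in $(\pp\text{SW})_0$ is, after sufficiently many even suspensions, zero-isomorphic to the image under $\iota_0$ of a cofiber sequence in $(\ff\text{Top}_*^{\text{CW}})_0$; hence it suffices to verify that $A\bar{\Smash}(-)$ sends filtered cofiber sequences to filtered cofiber sequences. Since cones, suspension, and $\bar{\Smash}$ are all built levelwise, I would check for each $t\in\R$ the chain of equalities
\begin{equation*}
\text{Cone}(1_A\bar{\Smash} f)(t)=\bigcup_{s+l=t}\text{Cone}(1_{A(s)}\Smash f(l))=\bigcup_{s+l=t}A(s)\Smash\text{Cone}(f(l))=(A\bar{\Smash}\text{Cone}(f))(t),
\end{equation*}
where the middle step uses the classical fact that the ordinary smash product commutes with mapping cones. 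Additivity of $A\bar{\Smash}(-)$ on $(\pp\text{SW})_0$ then follows from the distributivity $A\bar{\Smash}(X\vee Y)=(A\bar{\Smash}X)\vee(A\bar{\Smash}Y)$ proved earlier.

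Finally, (iii) reduces via $\S(-)=S^1_0\bar{\Smash}(-)$ to associativity of $\bar{\Smash}$: one has $[1]((A,a)\bar{\Smash}(X,n))=(\S(A\bar{\Smash}X),a+n)\simeq(\S A\bar{\Smash}X,a+n)\simeq(A\bar{\Smash}\S X,a+n)$. I expect the main obstacle to be the reduction inside (ii): carefully using the zero-isomorphism-plus-even-suspension argument from Section \ref{strictexactsub} to replace an abstract strict exact triangle in $(\pp\text{SW})_0$ with one arising directly from a cofiber sequence in $(\ff\text{Top}_*^{\text{CW}})_0$, without disturbing the action of $A\bar{\Smash}(-)$. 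Once that reduction is executed cleanly, the levelwise computation above does the real work and the remainder is bookkeeping.
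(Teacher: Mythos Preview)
Your proposal is correct and follows essentially the same route as the paper: both reduce the claim to verifying that $\bar{\Smash}$ is a TPC tensor structure via Lemma~3.3.5 of \cite{BCZ2}, and both check the three axioms by the same arguments (Remark~\ref{sizeofsmashmorphism} for (i), the levelwise cone computation $\text{Cone}(1_A\bar{\Smash}f)(t)=\bigcup_{s+l=t}A(s)\Smash\text{Cone}(f(l))$ for (ii), and $\S(-)=S^1_0\bar{\Smash}(-)$ for (iii)). Your explicit invocation of Section~\ref{strictexactsub} to justify the reduction in (ii) is slightly more careful than the paper's phrasing, but the underlying argument is identical.
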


\begin{theorem}\label{ktheorylem}
    The weighted Euler polynomial $\hat{\chi}_{\text{CW}}$, induces an isomorphism of $\Lambda_P$-algebras $\mathcal{X}: K(\pp \text{SW}_0) \to \Lambda_P$, with 

    \begin{equation}
        \mathcal{X}: [(X,n)]\mapsto (-1)^n \cdot \hat{\chi}_{\text{CW}}(X).
    \end{equation}

\end{theorem}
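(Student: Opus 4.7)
The plan is to verify in turn that $\mathcal{X}$ is a well-defined $\Lambda_P$-module map, that it respects the product, and finally that it is a bijection by exhibiting the class $[(S^0_0,0)]$ (with $S^0_0$ the zero-filtered $0$-sphere) as a cyclic $\Lambda_P$-module generator of $K(\pp\text{SW}_0)$. First, for well-definedness on isomorphism classes: if $(X,n) \cong (Y,m)$ in $\pp\text{SW}_0$, then after some even suspension we have $\S^{n+2l}X \simeq \S^{m+2l}Y$ as filtered CW complexes; using $\hat{\chi}_{\text{CW}}(\S Z) = -\hat{\chi}_{\text{CW}}(Z)$ (which follows from Corollary \ref{corsmash} together with $\hat{\chi}_{\text{CW}}(S^1_0) = -1$), this forces $(-1)^n \hat{\chi}_{\text{CW}}(X) = (-1)^m \hat{\chi}_{\text{CW}}(Y)$. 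For additivity under exact triangles, the analysis of Section \ref{strictexactsub} shows that, after suitable even suspension and zero-isomorphism, every exact triangle in $\pp\text{SW}_0$ comes from a cofiber sequence $X' \to Y' \to \text{Cone}(f') \to [1]X'$ in $(\ff\text{Top}_*^{CW})_0$, and the cells of $\text{Cone}(f')$ consist of the cells of $Y'$ together with cones over non-basepoint cells of $X'$ with the same weights and dimensions shifted by one, giving $\hat{\chi}_{\text{CW}}(Y') = \hat{\chi}_{\text{CW}}(X') + \hat{\chi}_{\text{CW}}(\text{Cone}(f'))$. The $\Lambda_P$-linearity $\mathcal{X}(t^r \cdot [A]) = t^r \mathcal{X}([A])$ is then immediate from the reindexing identity $\hat{\chi}_{\text{CW}}(\ss^r X) = t^r \hat{\chi}_{\text{CW}}(X)$.

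For multiplicativity, Corollary \ref{corsmash} provides $\hat{\chi}_{\text{CW}}(X \bar{\Smash} Y) = \hat{\chi}_{\text{CW}}(X) \cdot \hat{\chi}_{\text{CW}}(Y)$, so combined with $(-1)^{n+m} = (-1)^n(-1)^m$ one obtains $\mathcal{X}([(X,n) \bar{\Smash} (Y,m)]) = \mathcal{X}([(X,n)]) \cdot \mathcal{X}([(Y,m)])$; the unit is handled by $\mathcal{X}([(S^0_0,0)]) = \hat{\chi}_{\text{CW}}(S^0_0) = 1$. For bijectivity, I would exhibit an explicit generator. Using the identity $[[1]A] = -[A]$ coming from the standard triangle $A \xrightarrow{\mathrm{id}} A \to 0 \to [1]A$, one reduces $[(X,n)] = (-1)^n[(X,0)]$. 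Given any filtered CW complex $X$, take a linearization $L \in \text{Lin}^{\mathbb{S}}(X)$; each constituent strict exact triangle attaches a zero-filtered sphere $S^{k-1}_0$ via a shift-$r$ map, corresponding (via Section \ref{strictexactsub}) to an exact triangle in $\pp\text{SW}_0$ with K-theoretic relation $[X_i] = [X_{i-1}] - t^r[(S^{k-1}_0,0)]$. Using the natural isomorphism $(\S Z, n) \cong (Z, n+1)$ to reduce $[(S^{k-1}_0,0)] = (-1)^{k-1}[(S^0_0,0)]$ and iterating over all cells of $X$ produces $[(X,0)] = \sum_i (-1)^{k_i} t^{r_i} [(S^0_0,0)] = \hat{\chi}_{\text{CW}}(X) \cdot [(S^0_0,0)]$. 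Thus any element of $K(\pp\text{SW}_0)$ can be written as $\mathcal{X}([(X,n)]) \cdot [(S^0_0,0)]$, and the map $\lambda \mapsto \lambda \cdot [(S^0_0,0)]$ is a two-sided inverse to $\mathcal{X}$.

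The main obstacle is the careful bookkeeping of weights when translating between strict exact triangles of weight $r$ in $\pp\text{SW}_0$ and the underlying cofiber sequences in $(\ff\text{Top}_*^{CW})_0$: one must verify that after even suspension any such strict exact triangle is zero-isomorphic to the image of an honest cofiber sequence in $(\ff\text{Top}_*^{CW})_0$ and that the weight $r$ is consistently absorbed into a factor $t^r$ on the K-class of the cone term. Once this is handled, the remaining verifications are essentially cell-counts combined with the distributivity and weight-shift formulas already established for $\hat{\chi}_{\text{CW}}$ in earlier sections.
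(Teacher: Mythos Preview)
Your proposal is correct and follows essentially the same strategy as the paper: both arguments establish that $[(S^0_0,0)]$ is a cyclic $\Lambda_P$-generator of $K(\pp\text{SW}_0)$ with $[(X,n)]=(-1)^n\hat{\chi}_{\text{CW}}(X)\cdot[(S^0_0,0)]$, and both appeal to Corollary~\ref{corsmash} for multiplicativity and to $\hat{\chi}_{\text{CW}}(\ss^l X)=t^l\hat{\chi}_{\text{CW}}(X)$ for $\Lambda_P$-linearity. You are in fact more explicit than the paper about well-definedness on the exact-triangle relation (the paper leaves this implicit in the generator computation).

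One small point of caution: in your generator step you phrase things via a linearization $L\in\text{Lin}^{\mathbb{S}}(X)$, which by definition uses \emph{strict} exact triangles of weight $r$. The $K$-group relations, however, are imposed only by genuine exact triangles in $\pp\text{SW}_0$, and an $r$-isomorphism between the cone term and the next object does not force equality of their $K$-classes. The paper sidesteps this entirely by attaching the weight-shifted spheres $S^k_l=\ss^l S^k_0$ via shift-\emph{zero} maps, so that each cofiber sequence lands in $(\ff\text{Top}_*)_0$ and gives an honest exact triangle; the factor $t^l$ then comes directly from $[\ss^l A]=t^l[A]$. Your stated ``main obstacle'' is precisely this translation, and the cleanest fix is to adopt the paper's shift-zero attaching convention rather than routing through strict exact triangles.
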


This result is a persistence version of a known result in stable homotopy theory:

\begin{lemma}[Theorem 4.5.6 \cite{Mo}]
    There is a ring isomorphism $\chi:K(\text{SW}) \cong \Z $ given by 
    \begin{equation*}
    \chi([(X,n)])=(-1)^n\cdot \chi(X).
    \end{equation*}
\end{lemma}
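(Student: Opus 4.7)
The plan is to verify that the assignment $\chi\colon[(X,n)]\mapsto (-1)^n\chi(X)$ is a well-defined ring homomorphism and then establish bijectivity by showing that $K(\text{SW})$ is cyclic, generated by the class $[(S^0,0)]$.

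First, I would check well-definedness on the $K$-group. This requires two things: invariance under isomorphism in $\text{SW}$, which is immediate because $\chi$ is a stable homotopy invariant of finite CW complexes, and additivity across exact triangles. For the latter, recall that every exact triangle in $\text{SW}$ is, up to zero-isomorphism and even suspension, of the form $(X',0)\to (Y',0)\to (\mathrm{Cone}(f),0)\to{[1]}(X',0)$ for a map $f\colon X'\to Y'$ in $(\text{Top}_*)^{\text{CW}}$. Reduced Euler characteristic is additive on such cofiber sequences, and the translation functor $[1]$ corresponds to suspension under the identification $(X,n+1)\cong(\S X,n)$, which reverses the sign of $\chi$. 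Together these force the $(-1)^n$ twist and make $\chi$ additive on triangles.

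Next I would address the ring structure. The smash product descends to a symmetric monoidal product on $\text{SW}$ with unit $(S^0,0)$, which gives a commutative ring structure on $K(\text{SW})$. Multiplicativity of the homomorphism reduces to the identity $\chi(X\wedge Y)=\chi(X)\cdot\chi(Y)$ for reduced Euler characteristic, which follows from the cofiber sequence $X\vee Y \hookrightarrow X\times Y \twoheadrightarrow X\wedge Y$ together with the classical fact that unreduced $\chi$ is multiplicative on products. Combined with $(-1)^{n+m}=(-1)^n(-1)^m$, this shows the map is a ring homomorphism sending $[(S^0,0)]$ to $1$.

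Finally, bijectivity. Surjectivity is immediate from $\chi([(S^0,0)])=1$. For injectivity, I would prove by induction on the number of cells that every class $[(X,0)]$ in $K(\text{SW})$ is an integer multiple of $[(S^0,0)]$, the base case $X=\ast$ being trivial. For the inductive step, attaching a $k$-cell to $X_{\mathrm{old}}$ via $\phi\colon S^{k-1}\to X_{\mathrm{old}}$ yields a cofiber sequence whose image in $\text{SW}$ gives, in $K$-theory, $[(X_{\mathrm{new}},0)]=[(X_{\mathrm{old}},0)]+(-1)^k[(S^0,0)]$, using the relation $[(S^k,0)]=(-1)^k[(S^0,0)]$ that follows from the exact triangle $(A,0)\to 0\to{[1]}(A,0)$. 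Summing over cells yields $[(X,0)]=\chi(X)[(S^0,0)]$, and the corresponding identity for general $n$ follows by twisting by $[1]$. Hence $K(\text{SW})$ is cyclic on $[(S^0,0)]$, and because $\chi$ sends this generator to $1\in\Z$, the map must be injective. The main obstacle is really bookkeeping rather than anything deep: keeping track of the three independent sign contributions (the integer grading $n$, the suspension isomorphism, and the parity of cell dimensions), and justifying that additivity of reduced $\chi$ on pointed cofibrations genuinely transports to additivity on the abstractly-defined exact triangles of $\text{SW}$; for the latter, the description of exact triangles as even-suspension images of cofiber sequences recalled in Section~\ref{strictexactsub} is precisely what is required.
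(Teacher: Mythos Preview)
Your proposal is correct and follows essentially the same approach as the paper. The paper only gives a one-line sketch for this cited lemma (``the proof comes from noticing that $[(X,n)]=(-1)^n\cdot \chi(X)\cdot[(S^0,0)]$''), and its detailed proof of the persistence analogue Theorem~\ref{ktheorylem} proceeds exactly as you do: establish $[(S^k,0)]=(-1)^k[(S^0,0)]$ from the cofibre triangle, build an arbitrary complex by iterated cell attachments to obtain $[(X,0)]=\chi(X)[(S^0,0)]$, and conclude that $K(\text{SW})$ is cyclic on $[(S^0,0)]$. Your additional care in separately verifying well-definedness and multiplicativity is welcome but not a different route.
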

Here, $\chi(X)$ is the reduced Euler characteristic. The proof comes from noticing that $[(X,n)]=(-1)^n\cdot \chi(X) \cdot [(S^0,0)]$. The proof of Theorem \ref{ktheorylem} follows in a similar manner:

\begin{proof}[Proof of Theorem \ref{ktheorylem}]
    Firstly, we claim that $[(X,n)]=(-1)^n \cdot \hat{\chi}_{\text{CW}}(X) \cdot [(S^0_0,0)]$. Let $S^k_t$ denote the filtered CW-complex given by 
    \begin{equation*}
        S^k_l(r):=\begin{cases}
            S^k & r\geq l\\
            * & r< l
        \end{cases}
    \end{equation*}
   We find that:

    \begin{align*}
        [(S^k_l,0)]=[\ss^l( S^k_0,0)]=&t^l \cdot [(S^k_0,0)]\\
        =&(-1)^k \cdot t^l \cdot [(S^0_0,0)]
    \end{align*}
    where the last equality follows from the exact triangle

    \begin{equation*}
        (S^{k-1}_0,0) \to * \to (S^k_0,0) \to (S^k_0,0)
    \end{equation*}
  giving $[(S^{k-1}_0,0)]=-[(S^k_0,0)]$, i.e., in general $[(X,n)]=-[(X,n+1)]$ in $K(\pp \text{SW}_0)$. Any filtered CW complex is formed by attaching of spheres $S^k_l$ via cofiber sequences in $(\ff \text{Top}_*)_0$, thus we have a sequence of exact triangles in $\pp \text{SW}_0$ of the form

  \begin{equation*}
      \begin{tikzcd}
          \bigvee_{i\in I_0} (S^0_{l_i},0) \ar[r]& (X_0,0)\ar[r] & (X_1,0)\ar[r] &  \bigvee_{i\in I_0} (S^0_{l_i},1)\\
            \bigvee_{i\in I_1} (S^1_{l_i},0)\ar[r] & (X_1,0) \ar[r]& (X_2,0)\ar[r] & \bigvee_{i\in I_1} (S^1_{l_i},1)\\
            \vdots & & & \\
              \bigvee_{i\in I_{n-1}} (S^{n-1}_{l_i},0) \ar[r]& (X_{n-1},0)\ar[r] & (X,0)\ar[r] &  \bigvee_{i\in I_{n-1}} (S^{n-1}_{l_i},1)
      \end{tikzcd}
  \end{equation*}
  Where $X_0$ is the `filtered zero-skeleton'. Implying that in $K(\pp \text{SW}_0)$ we have 
  \begin{align*}
      [(X,0)]=& [(X_{n-1},0)]-\big( (-1)^{n-1}\cdot [(S^0_0,0)]\sum_{i\in I_{n-1}} t^{l_i}\big)\\
      =&\big([(X_{n-2},0)]-\big( (-1)^{n-2}\cdot [(S^0_0,0)]\sum_{i\in I_{n-2}} t^{l_i}\big) - \big( (-1)^{n-1}\cdot [(S^0_0,0)]\sum_{i\in I_{n-1}} t^{l_i}\big)\\
      =& \hdots\\
      =& \sum_j \sum_{i \in I_j}(-1)^j \cdot t^{l_i} \cdot [(S^0_0,0)]\\
      =&\hat{\chi}_{\text{CW}}(X) \cdot [(S^0,0)].
  \end{align*}

  Finally, as $[(X,n)]=-[(X,n+1)]$, we obtain the general form:
  \begin{equation*}
      [(X,n)]=(-1)^n \cdot \hat{\chi}_{\text{CW}}(X) \cdot [(S^0_0,0)].
  \end{equation*}

From the results of proposition \ref{propwedge} and corollary \ref{corsmash} we see that $\mathcal{X}$ induces a ring morphism. Furthermore, as $\hat{\chi}_{\text{CW}}(\ss^l X)=t^l \cdot \hat{\chi}_{\text{CW}}(X)$, (something that is easy to verify) we find that $\hat{\chi}$ induces a morphism of $\Lambda_P$-algebras. Moreover, this is an isomorphism as $K(\pp \text{SW}_0)$ as a $\Lambda_P$-algebra is one dimensional with generator $[(S^0_0,0)]$ and $\mathcal{X}[(S^0_0,0)]=1$, generates $\Lambda_P$.
\end{proof}

\subsection{Filtered Spectra}
The Spanier-Whitehead category can be seen to be a subcategory of the stable homotopy category. There are various equivalent (in homotopy) constructions of this category, the most simple of which is given by sequential spectra. The constructions of this paper can be seen to extend to this setting. One can define a category of filtered spectra similarly to that of filtered topological spaces. i.e., a \textbf{filtered spectrum} is a spectrum $(X,\sigma)$ equipped with a filtration $\ff$ consisting of an $\R$-indexed family of subspectra.
\begin{equation}
\ff=\{X(r): X(r)\xhookrightarrow{i_r} X\}_{r\in \R} \end{equation}
where $i_r$ is the canonical inclusion map.
This family should satisfy:
\begin{enumerate}
\item The $n$-th level space of $X$, should be a filtered space with $X(r)_n=X_n(r)$.
    \item For all $r\in \R_{}$ for all $n\in \N$, we have $i_r(*_{(X(r))_n})=*_{X_n}$, i.e., they have a common basepoint.
    \item  For all $s<r$ we have $X(s) \subset X(r)$, we denote the inclusion map $i_{s,r}$.
    \item There exists an $r_0\in \R$ such that for all $r< r_0$ we have $X(r)=X'$, i.e., the filtration is stabilises below to some spectrum $X'$.

    \item There exists an $r_1\in \R$ such that for all $r>r_1$ we have $X(r)=X$, i.e., the filtration stabilises above to $X$ which we refer to as the \text{total spectrum}.
    \item The homotopy type of $X(r)$ can only change at finitely many $r$, we call this set $\text{Spec}(X)$.

\end{enumerate}
We can define morphisms of filtered spectra to be an $\R$-indexed family of morphisms of spectra that commute with the filtration maps. Most of the constructions in the category of filtered spaces will pass to constructions on the category of filtered spectra, $\ff \text{Spectra}$. In particular, we can restrict our attention to filtered CW-spectra, via an analogue of the CW approximation for spectra. 

\begin{lemma}
    Any filtered spectrum $X$ is weakly equivalent to a filtered CW-spectrum $\bar{X} \to X$, where the following homotopy commutes for all $r\leq s$
    \begin{equation}
        \begin{tikzcd}
            X(s)\ar[r,"\alpha(s)"] & \bar{X}(s)\\
            X(r)\ar[r,"\alpha(r)"]\ar[u,"i_{r,s}"] & \bar{X}(r)\ar[u,"i_{r,s}"]
        \end{tikzcd}
    \end{equation}
    \begin{proof}
        The proof is almost identical to the proof of Lemma \ref{CWspace} using the CW-appoximation theorem for spectra (see \cite{EKM} Theorem 1.5).
    \end{proof}
\end{lemma}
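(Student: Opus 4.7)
The plan is to mimic the proof of Lemma \ref{CWspace} essentially verbatim, replacing the space-level CW approximation of pairs (May, Chapter 10) with its spectrum-level analogue. The first step is to produce the spectrum-pair analogue: given a pair of spectra $(X,A)$ and a CW-spectrum approximation $\bar{A}\xrightarrow{\alpha_A} A$, there exists a CW-spectrum approximation $\bar{X}\xrightarrow{\alpha_X} X$ with $\bar{A}$ a subcomplex of $\bar{X}$ and $\alpha_X|_{\bar{A}} = \alpha_A$. This either follows directly from \cite{EKM} Theorem 1.5 (or an obvious pair-variant), or can be deduced by taking the mapping cylinder of $\alpha_A$, attaching cells of $X$ relative to $A$ one dimension at a time and killing the relative homotopy groups, exactly as in the space case. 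Since this is a standard fact in the classical homotopy theory of spectra, I would cite it and move on.

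Next, the filtered construction. By the assumption that $\mathrm{Spec}(X)$ is countable and the filtration stabilizes outside a finite interval (we may assume $\mathrm{Spec}(X)=\{r_i\}_{i\in\N}$, with $r_0 := \lfloor X \rfloor$ so that $X(r)=X'$ for $r<r_0$), I choose a CW-spectrum approximation $\overline{X'}\xrightarrow{\alpha} X'$ and set $\bar{X}(r):=\overline{X'}$ for all $r<r_0$. I then proceed inductively over the spectral points: having built $\bar{X}(r_i)\xrightarrow{\alpha(r_i)} X(r_i)$ as a CW-spectrum approximation, apply the pair lemma above to the pair $(X(r_{i+1}),X(r_i))$ with the already-chosen approximation of $X(r_i)$, obtaining $\bar{X}(r_{i+1})\xrightarrow{\alpha(r_{i+1})} X(r_{i+1})$ into which $\bar{X}(r_i)$ includes as a subcomplex and on which $\alpha(r_{i+1})$ restricts to $\alpha(r_i)$. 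For $r\in [r_i,r_{i+1})$ I set $(\bar{X}(r),\alpha(r)):=(\bar{X}(r_i),\alpha(r_i))$ and take $i_{r,s}^{\bar{X}}$ to be the identity. For arbitrary $r<s$ in different spectral intervals, $i_{r,s}^{\bar{X}}$ is defined as the composition of the cellular inclusions produced at each intervening spectral point.

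The resulting $\bar{X}$ is a filtered CW-spectrum and each $\alpha(r)$ is a weak equivalence by construction. The homotopy-commuting square
\begin{equation*}
\begin{tikzcd}
\bar{X}(s) \ar[r,"\alpha(s)"] & X(s) \\
\bar{X}(r) \ar[r,"\alpha(r)"]\ar[u,"i_{r,s}^{\bar{X}}"] & X(r) \ar[u,"i_{r,s}^{X}"]
\end{tikzcd}
\end{equation*}
commutes strictly whenever $r$ and $s$ lie in the same interval between consecutive spectral points (both verticals are identities) and commutes up to homotopy across spectral points by the restriction property guaranteed by the pair lemma. By composing these homotopies along the finitely many spectral points traversed between $r$ and $s$, the general square commutes up to homotopy.

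The main obstacle is really the first step, namely confirming that the pair-form of CW approximation is available for spectra in a form compatible with cellular inclusions. Once that is granted (and \cite{EKM} provides the ingredients), the filtered induction is a direct transcription of the proof of Lemma \ref{CWspace}, so no new ideas are required beyond handling the fact that the levelwise approximations are now spectra rather than spaces.
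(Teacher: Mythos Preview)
Your proposal is correct and follows essentially the same approach as the paper: the paper's proof is literally the one-line remark that the argument is identical to that of Lemma~\ref{CWspace} with the space-level CW approximation of pairs replaced by its spectrum analogue from \cite{EKM}, and you have simply written out that transcription in detail. The only minor discrepancy is that the filtered-spectrum axioms in the paper require $\mathrm{Spec}(X)$ to be finite (not merely countable), which if anything simplifies your induction.
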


In order to pass to a persistence category by taking taking the homotopy category $\text{Ho}\ff \text{Spectra}^{\text{CW}}$, we need a filtered version of spectrification. In the category of filtered topological spaces we can define a filtered free infinite loop space functor $Q: \ff \text{Top}_* \to \ff \text{Top}_*$ given by 
\begin{equation}
    Q(X)(r):= \text{Colim} \Omega^k \S^k X(r)
\end{equation}

There are filtration maps $i_{r,s}:Q(X)(r) \to Q(X)(s)$ induced by the maps $i_{r,s}:X(r) \to X(s)$. The functor extends to a functor $Q: \ff \text{Spectra} \to \ff \text{Spectra}$ by 

\begin{equation*}
    Q(X)(r)_n=Q(X_n(r)).
\end{equation*}
We can then pass to a filtered stable homotopy category by defining hom-sets as the homotopy classes of maps
\begin{equation}
    \text{Hom}_{\text{Ho}\ff \text{Spectra}^\text{CW}}(X,Y)(r)=\text{Hom}_{\ff \text{Spectra}^{\text{CW}}}(Q(X),Q(Y))(r)/\sim
\end{equation}
Note that this is a persistence category with shift functors $\ss^a:\text{Ho}\ff \text{Spectra}^{\text{CW}}\to \text{Ho}\ff \text{Spectra}^{\text{CW}}$ given by $\ss^a(X)(r)=X(r-a)$.

\begin{lemma}
    The filtered stable homotopy category $\text{Ho}\ff \text{Spectra}^{\text{CW}}$ is a TPC. 
    \begin{proof}
        Exact triangles in $(\text{Ho}\ff \text{Spectra}^{\text{CW}})_0$ will be families of exact triangles 
        \begin{equation*}\{X(r) \to Y(r) \to Z(r) \to \S X(r)\}_{r\in \R}\end{equation*} 
        which commute with the filtration maps on $X,Y$ and $Z$. It follows from the usual stable homotopy category being triangulated that this is triangulated. The maps $\eta_r^X$ fit into exact triangles of the form 

        \begin{equation*}
            X \xrightarrow{\eta_r^X}\ss^{-r}X \to K \to \S X
        \end{equation*}
        with $K$-acyclic. Indeed, we have that 
        \begin{equation*}
            X(t) \xrightarrow{\eta_r^X(t)}\ss^{-r}X(t)
        \end{equation*}
        is given by the inclusion $X(t) \hookrightarrow X(t+r)$, thus has cone

        \begin{equation*}
            \text{Cone}(X(t) \hookrightarrow X(t+r))_n\simeq X_{n}(t+r)/X_n(t) 
        \end{equation*}
        Naturally, this object is $r$-acyclic as 
        \begin{equation*}
            \eta_{r}^{X_n(t+r)/X_n(t)}: X_n (t + r) / X_n (t) \to X_n (t + 2r) / X_n (t + r) \end{equation*} is null homotopic for all $r\in \R$ and $n\in\Z$.
    \end{proof}
\end{lemma}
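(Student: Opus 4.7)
The plan is to verify the three conditions in the definition of a TPC for $\cc := \text{Ho}\ff \text{Spectra}^{\text{CW}}$: that $\cc_0$ is triangulated, that every shift morphism $\eta_r^X$ completes to an exact triangle with $r$-acyclic cone, and that the translation functor $T = \Sigma$ commutes with the shift functors $\ss^a$. The overall strategy mirrors the proof that $\pp\text{SW}$ is a TPC: since all constructions (cones, suspensions, cofiber sequences, spectrification via $Q$) are made level-wise on the filtration, the usual triangulated structure on the stable homotopy category of spectra lifts filtration-compatibly. The filtered CW approximation lemma ensures that we can carry out cone constructions within the CW setting, and the functor $Q$ preserves the required level-wise structure.

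First, exact triangles in $\cc_0$ should be defined as $\R$-indexed families of cofiber sequences
\begin{equation*}
    \{X(r) \xrightarrow{f(r)} Y(r) \xrightarrow{g(r)} Z(r) \xrightarrow{h(r)} \Sigma X(r)\}_{r\in\R}
\end{equation*}
that commute with the filtration inclusions of $X,Y,Z$, or equivalently those triangles obtained from cofiber sequences of maps of shift zero in $\ff\text{Spectra}^{\text{CW}}$. To check the triangulated axioms (TR1–TR4), one argues level-wise: given a shift-zero morphism $f: X \to Y$, its filtered mapping cone $C_f$ with $C_f(r) := \text{Cone}(f(r))$ is again a filtered CW-spectrum (using the filtered CW approximation lemma), and the filtration inclusions on $C_f$ are canonically induced by those on $X$ and $Y$. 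The octahedral axiom and rotation axioms follow from their level-wise versions in the usual stable homotopy category together with the naturality of the cone construction with respect to filtration inclusions.

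For the second condition, observe that $\eta_r^X: X \to \ss^{-r}X$ is at level $t$ simply the inclusion $X(t) \hookrightarrow X(t+r)$. Its cone $K$ then satisfies $K(t) \simeq X(t+r)/X(t)$, with filtration inclusions $K(t) \to K(s)$ induced by the inclusions $X(t+r)/X(t) \hookrightarrow X(s+r)/X(s)$ for $t \leq s$. To show $K \simeq_r 0$, I would analyze $\eta_r^K: K \to \ss^{-r}K$, which at level $t$ is the map
\begin{equation*}
    X(t+r)/X(t) \longrightarrow X(t+2r)/X(t+r)
\end{equation*}
induced by the filtration inclusion $X(t+r) \hookrightarrow X(t+2r)$. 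Since any representative $x \in X(t+r)$ lies in the subspectrum being quotiented out in the target, this map factors through the basepoint and is null-homotopic. Passing to the stable homotopy category yields $\eta_r^K = 0$, so $K$ is $r$-acyclic.

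For the third condition, $T = \Sigma$ acts by suspension on the spectrum structure (i.e., shifting level indices of the underlying sequential spectrum), while $\ss^a$ reindexes the filtration; these act on independent data, so they manifestly commute. The main obstacle, as I see it, is the careful bookkeeping in the first part: one must confirm that completions of morphisms to triangles and verifications of the octahedral axiom can be carried out simultaneously at every level in a way that is strictly compatible with the filtration inclusions, which requires that the level-wise cofiber constructions be natural with respect to inclusions of subspectra—this is where the filtered CW hypothesis and the well-behaved nature of $Q$ are essential.
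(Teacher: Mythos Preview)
Your proposal is correct and follows essentially the same approach as the paper: define exact triangles level-wise as filtration-compatible families of cofiber sequences, inherit the triangulated structure from the ordinary stable homotopy category, and verify $r$-acyclicity of the cone of $\eta_r^X$ by identifying $K(t)\simeq X(t+r)/X(t)$ and observing that the induced map into $X(t+2r)/X(t+r)$ is null-homotopic. Your write-up is in fact slightly more complete than the paper's, since you explicitly check the commutation of $\Sigma$ with $\ss^a$ and flag the naturality of the cone construction with respect to filtration inclusions, both of which the paper leaves implicit.
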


One reason for working with $\text{Ho}\text{Spectra}$ and not with $\text{SW}$ is due to Brown's representation theorem; every generalised cohomology theorem $E^*$ can be represented by a spectrum $E$ with

\begin{equation*}
    E^n(-)\cong[-,E_n]
\end{equation*}

dually, every generalised homology theorem $E_*$ can be represented by a spectrum $E$ with
\begin{equation*}
    E_n(-)\cong\pi_n(E_n\Smash X).
\end{equation*}

It makes sense that a similar statement should hold for `persistence homology theories'. For example, consider the usual persistence (singular) homology $\text{H}_*(X;f)$ of some space $X$ with functional $f:X \to \R$. Let $K(\Q)$ be the Eilenberg-MacLane spectrum for the rationals, give this spectrum the zero filtration, i.e., $K(\Q)(r)=K(Q)$ for all $r\geq 0$ and is the trivial spectrum for $r<0$. 
Then \begin{align*}
    (K(Q)_n \bar{\Smash} X_f)(r)=&\bigcup_{s+t=r} K(\Q)_n(s) \Smash X_f(t)\\
    =& \bigcup_{t\leq r} K(\Q)_n \Smash X_f(t)\\
    =&K(\Q)_n \Smash X_f(r)
\end{align*}
Giving
\begin{equation}
    \text{H}_*^{\leq r}(X;\Q))=\pi_n \big( K(\Q)_n\Smash X_f(r)\big)= \pi_n\big((K(\Q)_n \bar{\Smash} X_f)(r)\big)=\pi_n^r \big( K(\Q)_n \bar{\Smash}X_f\big).
\end{equation}

Note that this is a specific example of what one could call a \textbf{generalised filtered homology theory}. Such an object should consist of the following:

\begin{enumerate}
    \item A functor $E^\leq_*:\ff \text{Top}^{\text{CW}}_* \to \text{Ab}_\Z^\pp$ from filtered CW complexes to $\Z$ graded persistence modules in Abelian groups. i.e., $\text{Ab}_\Z^\pp:= [(\R,\leq), \text{Ab}_\Z]$.
    \item Shift zero isomorphisms $s: E^\leq_n(-) \to E^\leq_{n+1}(\S(-))$.
\end{enumerate}
such that:
\begin{itemize}
\item $E^\leq_*$ restricts to maps $E^\leq_*(-):\text{Hom}_{\ff \text{Top}_*^{\text{CW}}}(X,Y)(r) \to \text{Hom}_{\text{Ab}_\Z^\pp}(E^\leq_*(X),E^\leq_*(Y))(r) $ such that $E^\leq_*\circ i_{r,s}=i_{r,s}\circ E^\leq_*$.
\item Isomorphisms $E^\leq_*(\ss^r(-)) \to \ss^{r}E^\leq_*(-)$.
    \item If $f\sim f'\in \text{Hom}_{\ff \text{Top}_*^{\text{CW}}}(X,Y)(r)$ are homotopic maps, then \begin{equation*}
        E^\leq_*(f)=E^\leq_*(f')\in \text{Hom}_{\text{Ab}_\Z^\pp}(E^\leq_*(X),E^\leq_*(Y))(r).
    \end{equation*}
    \item Let $i\in \text{Hom}_{\ff \text{Top}_*^\text{CW}}(A,X)(0)$ be levelwise an inclusion, then \begin{equation*}
        E^\leq_*(A) \xrightarrow{E^\leq_*(i)} E^\leq_*(X) \xrightarrow{E^\leq_*(j)}E^\leq_*( \text{Cone}(i))
    \end{equation*}
    is exact in $(\text{Ab}_\Z^\pp)_0$. 
    \item $E^\leq_*(\text{Cone}(\eta_r^X))$ is $r$-acylic.
\end{itemize}
It is easy to verify that persistence homology satisfies these axioms.
\begin{rmk}
This is a first attempt at what a generalised filtered homology theory should be. Notice that similar to the example of $H_*^\leq(X;\Q)$ if one takes any spectrum and gives it the zero filtration, then $E_n^\leq (-):=\pi_n^r(E\bar{\Smash}(-))$ defines a generalised filtered homology theory. In general though one could take any filtration on a spectrum and define such a theory. This follows from $S^1_0\bar{\Smash}E\bar{\Smash}X \simeq E \bar{\Smash}S^1_0 \bar{\Smash X}$, and shifting commuting with the filtered smash product:
\begin{equation}
    E^{\leq}_n(\S(-))=\pi_n^r(E \bar{\Smash} \S(-))\cong\pi_n^r\big(\S \big(E\bar{\Smash}(-)\big)\big)= \pi_{n+1}^r(E\bar{\Smash}(-))=E_{n+1}^{\leq}(-)
\end{equation}
and
\begin{equation}
    E^{\leq}_n(\ss^a(-))=\pi_n^r\big(E\bar{\Smash}(\ss^a(-))\big)=\pi_n^r\big( \ss^a \big( E \bar{\Smash}(-)\big)\big)=\pi_n^{r-a}\big((E\bar{\Smash}(-)\big)=\ss^aE^\leq_n(-).
\end{equation}

\end{rmk}

To conclude, we remark that this filtration construction (i.e., constructing $\ff \text{Top}_*$ and $\ff \text{Spectra}$ from $\text{Top}_*$ and $\text{Spectra}$) should be a part of a general construction on model categories. One would expect that if the model category is stable then the filtered homotopy category should return a TPC.
\newpage

\phantomsection
\addcontentsline{toc}{section}{References}


\begin{thebibliography}{ams}
\bibitem[Ar]{Ar}
M. Arkowitz, \textit{Introduction to Homotopy Theory} Springer New York, NY,  (2011)

\bibitem[BCZ1]{BCZ1}
 P. Biran, O. Cornea, J. Zhang, \textit{Triangulation, Persistence, and Fukaya categories}, (2023), 
arXiv.2304.01785 

\bibitem[BCZ2]{BCZ2}
    P. Biran, O. Cornea, J. Zhang, \textit{Persistence K-theory} (2024) arXiv:2305.01370 . 



\bibitem[BK]{BK}
A. Bondal, M. Kapranov, \textit{Enhanced Triangulated Categories}. Math. USSR Sb. 70 93 (1991) 


 
 \bibitem[BM]{BM}
P. Bubenik, N. Milićević, \textit{Homological algebra for persistence modules}. Foundations of Com-
putational Mathematics,  (2019), pages 1–46

\bibitem[Bo]{Bo}
F. Borceux, \textit{Handbook of Categorical Algebra: Basic category theory}. Cambridge University Press  (1994) 

\bibitem[BS]{BS}
P. Bubenik, J. A. Scott, \textit{Categorification of persistent homology}  Discrete Comput Geom 51, (2014) 600–627 


\bibitem[Ca]{Ca}
G. Carlsson, \textit{Topology and data}. Bull. Amer. Math. Soc. (N.S.), 46(2): (2009)  255–308.


\bibitem[De]{De}
I. Dell’Ambrogio, \textit{The Spanier-Whitehead category is always
triangulated} Master’s thesis, ETH Zurich, (2003) 

\bibitem[Di]{Di}
T. Dieck, \textit{Algebraic Topology}. European Mathematical Society, Zurich. (2010)

\bibitem[DG]{DG}
P. Dłotko, D.Gurnari, \textit{Euler Characteristic Curves and Profiles: a stable shape invariant for big data problems} (2023) arXiv:2212.01666



\bibitem[DP]{DP}
 A. Dold, D. Puppe, \textit{Duality, trace and transfer}, Topology, A collection of papers, Trudy Mat. Inst. Steklov., 154, (1983), 81–97
\bibitem[ELZ]{ELZ}
H. Edelsbrunner, D. Letscher, A. Zomorodian, \textit{Topological persistence and simplification}. In Proceedings 41st Annual Symposium on Foundations of Computer Science, (2000), 454–463.

\bibitem[EKM]{EKM}
A. Elmendorf, I. Kriz, J.P. May, \textit{Modern foundations for stable homotopy theory}, Ioan Mackenzie James (ed.), Handbook of Algebraic Topology, (1995), pp. 213–253,

 \bibitem[GZ]{GZ}
 P. Gabriel, M. Zisman, \textit{Calculus of fractions and homotopy theory}. Ergebnisse der Mathematik und ihrer
Grenzgebiete, Band 35, Springer, Berlin (1967) 

\bibitem[He]{He}
A. Heller, \textit{Stable homotopy categories}
Bull. Amer. Math. Soc. 74 (1968), 28-63
\bibitem[HL]{HL}
O. Hacquard, V. Lebovici, \textit{Euler Characteristic Tools For Topological Data Analysis}. Journal of Machine Learning Research, (2024), 25. ffhal-04143938v2f

\bibitem[Ke]{Ke}
G. Kelly, \textit{Basic concepts of enriched category theory}. Reprints in Theory and Applications of Categories, No. 10,  (2005) 




\bibitem[Lu]{Lu}
J. Lurie, \textit{Higher Topos Theory}. Annals of Mathematics Studies 170, Princeton Univ. Press (2009) 

\bibitem[Ma1]{Ma1}
J.P. May, \textit{A Concise Course in Algebraic Topology}. University of Chicago Press  (1999)

\bibitem[Ma2]{Ma2}
J.P. May,
\textit{The additivity of traces in triangulated categories} Advances in Mathematics 163, (2001),  34–73 


\bibitem[Mo]{Mo}
C. Moggach, \textit{A Homotopical Categorification of the
Euler Calculus} PhD Thesis (2020)


\bibitem[Ne]{Ne}
A. Neeman,  \textit{Triangulated categories}. Annals of Mathematics Studies, Princeton University Press  (2001)

 
 \bibitem[PRSZ]{PRSZ}
 L. Polterovich, D. Rosen, K. Samvelyan, J. Zhang  \textit{Topological Persistence in Geometry and Analysis}. 	University Lecture Series, 74. American Mathematical Society, (2020)

\bibitem[PSS]{PSS}
 L. Polterovich, E. Shelukhin, and V. Stojisavljević. \textit{Persistence modules with operators in Morse and
Floer theory}. Mosc. Math. J., 17(4):757–786, (2017).

 
 

 
 \bibitem[Schw]{Schw}
S. Schwede, \textit{Stable homotopical algebra and $\Gamma$-Spaces} Mathematical Proceedings of the Cambridge Philosophical Society , Volume 126 , Issue 2, (1999), 329 - 356



\bibitem[SW]{SW}
E. Spanier, G. Whitehead, \textit{A first approximation to homotopy theory} Proc. Nat. Acad. Sci. U.S.A., 39 (1953), 655-660.

 \bibitem[Ta]{Ta}
G. Tabuada, 
\textit{Theorie homotopique des DG-categories}. (2007)
https://doi.org/10.48550/arXiv.0710.4303




 \bibitem[Ve]{Ve}
J-L. Verdier,  \textit{Des catégories dérivées des catégories abéliennes}. Astérisque, no. 239, (1996) 269  

 \bibitem[We]{We}
C. Weibel, \textit{An Introduction to Homological Algebra}. Cambridge University Press (1994) 









\bibitem[ZC]{ZC}
A. Zomorodian, G. Carlsson, \textit{Computing persistent homology}. Discrete Comput. Geom.,
33(2):  (2005) , 249–274  












 
\end{thebibliography}
\end{document}